\numberwithin{equation}{section}
\DeclareMathOperator{\Tr}{Tr}
\DeclareMathOperator{\OpW}{Op_\hbar^w}
\DeclareMathOperator{\supp}{supp}
\DeclareMathOperator{\dist}{dist}
\DeclareMathOperator{\im}{\mathrm{Im}}
\newcommand{\norm}[1]{\lVert #1 \rVert}
\newcommand{\abs}[1]{\left| #1 \right|}
\newcommand{\R}{\mathbb{R}}
\newcommand{\C}{\mathbb{C}}
\newcommand{\N}{\mathbb{N}}
\newtheorem{thm}{Theorem}[section]
\newtheorem{lemma}[thm]{Lemma}
\newtheorem{proposition}[thm]{Proposition}
\theoremstyle{definition}
\newtheorem{definition}[thm] {Definition}
\newtheorem{assumption}[thm]{Assumption}
\theoremstyle{remark}
\newtheorem{remark}[thm]{Remark}
\begin{document}

\title{Sharp semiclassical spectral asymptotics for Schr\"odinger operators with non-smooth potentials}

\author{S{\o}ren Mikkelsen}
  \affil{\small{Department of Mathematics and Statistics, University of Helsinki \\ Helsinki, Finland \\ email:~\texttt{soren.mikkelsen@helsinki.fi} }}

\maketitle

\begin{abstract}
We consider semiclassical Schr\"odinger operators acting in $L^2(\R^d)$ with $d\geq3$. For these operators we establish sharp spectral asymptotics without full regularity. For the counting function we assume the potential is locally integrable and that the negative part of the potential minus a constant is one time differentiable and the derivative is H\"older continues with parameter $\mu\geq1/2$. Moreover, we also consider sharp Riesz means of order $\gamma$ with $\gamma\in(0,1]$. Here we assume the potential is locally integrable and that the negative part of the potential minus a constant is two time differentiable and the second derivative is H\"older continues with parameter $\mu$ that depends on $\gamma$.   
\end{abstract}

 \section{Introduction}
Consider a semiclassical Schr\"odinger operator $H_\hbar= - \hbar^2\Delta + V$ acting in $L^2(\R^d)$, where  $-\Delta$ is the positive Laplacian and $V$ is the potential. For the Schr\"odinger operator $H_\hbar$ the Weyl law states that
 \begin{equation}\label{EQ:weyl_general}
 	 \Tr\big[\boldsymbol{1}_{(-\infty,0]}(H_\hbar)\big] = \frac{1}{(2\pi\hbar)^d} \int_{\R^{2d}} \boldsymbol{1}_{(-\infty,0]}(p^2+V(x)) \, dpdx + o(\hbar^{-d}),
 \end{equation}
 where $\boldsymbol{1}_{\Omega}(t)$ is the characteristic function of the set $\Omega$. It has recently been proven by Frank \cite{MR4651285} that \eqref{EQ:weyl_general} is valid under the condition that $d\geq3$, $V\in L^1_{loc}(\R^d)$ and $V_{-}\in L^{\frac{d}{2}}(\R^d)$, where $V_{-} = \max(0,-V)$. These conditions are the minimal conditions such that both sides of the equality are well defined and finite. For a brief historical description of the development on establishing \eqref{EQ:weyl_general} under minimal assumptions see the introduction of \cite{MR4651285}. 
 
Under additional assumptions on the potential $V$ it was established by Helffer and Robert in \cite{MR724029} that
 \begin{equation}\label{EQ:weyl_general_1}
  \Tr\big[\boldsymbol{1}_{(-\infty,0]}(H_\hbar)\big] = \frac{1}{(2\pi\hbar)^d} \int_{\R^{2d}} \boldsymbol{1}_{(-\infty,0]}(p^2+V(x)) \, dpdx + \mathcal{O}(\hbar^{1-d})
 \end{equation}
 for all $\hbar\in(0,\hbar_0]$, $\hbar_0$ suffciently small. They proved this under the condition that $V\in C^\infty(\R^d)$, satisfies some regularity condition at infinity and $V(x)\geq c>0$ for all $x \in \Omega^c$, where $\Omega\subset \R^d$ is some open bounded set. Moreover, they assumed a non-critical condition on the energy surface $\{(x,p)\in \R^{2d} \,|\, p^2 + V(x) =0\}$. The non-critical condition can afterwards be removed see e.g. \cite{MR1343781}. The error estimate in  \eqref{EQ:weyl_general_1} is the best generic error estimate one can obtain. As an example, one can consider the operator $H_\hbar= - \hbar^2\Delta + x^2 - \lambda$, for some $\lambda>0$. For this operator we can explicitly find all eigenvalues and check by hand that \eqref{EQ:weyl_general_1} is valid with an explicit error of order $\hbar^{1-d}$.
  
When comparing the two results in dimensions $d\geq3$ it do raise the question: Is the formula \eqref{EQ:weyl_general_1} valid under less smoothness? Could it even be valid for all $V$ satisfying the assumptions of the result by Frank? The last part of the question seems currently out of reach to give a positive answer and to the authors knowledge there do not yet exist a counter example. However, for the first part of the question we will give positive answers.

We will in fact not just consider  the Weyl law but also Riesz means. That is for $\gamma\in[0,1]$ we will consider traces of the form
 \begin{equation}\label{traces_to_consider}
	\Tr\big[g_\gamma(H_{\hbar})\big],
\end{equation}
where the function $g_\gamma$ is given by
\begin{equation}
	g_\gamma(t) = \begin{cases}
	\boldsymbol{1}_{(-\infty,0]}(t) &\gamma=0
	\\
	(t)_{-}^\gamma &\gamma\in(0,1].
	\end{cases}
\end{equation}
 Frank also considered traces of the form \eqref{traces_to_consider} in \cite{MR4651285}. Helffer and Robert did only consider Weyl asymptotics in \cite{MR724029}, but proved the sharp estimate for Riesz means in \cite{MR1061661}. For later comparison and use we recall the exact statement of the results obtained by Frank in \cite{MR4651285}.
 \begin{thm}\label{Thm:Frank}
 Let $\gamma\geq1/2$ if $d=1$, $\gamma>0$ if $d=2$ and $\gamma\geq0$ if $d\geq3$. Let $\Omega\subset\R^d$ be an open set and let $V\in L^1_{loc}(\Omega)$ with $V_{-}\in L^{\gamma+d/2}(\Omega)$. Then
 \begin{equation}
 	  \Tr\big[g_\gamma(H_\hbar)\big] = \frac{1}{(2\pi\hbar)^d} \int_{\R^{2d}}g_\gamma(p^2+V(x))  \,dx dp +o(\hbar^{-d}) 
 \end{equation}
 as $\hbar\rightarrow 0$, where $H_\hbar = -\hbar^2\Delta +V(x)$ is considered in $L^2(\Omega)$ with Dirichlet boundary conditions.
 \end{thm}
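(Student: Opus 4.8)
The plan is to establish the asymptotics by an approximation (sandwiching) argument: reduce a general potential to one that is bounded, continuous and compactly supported, for which the Weyl law is classical, and control every approximation error \emph{uniformly in} $\hbar$ by a semiclassical Lieb--Thirring inequality. The crucial a priori input is the bound
\begin{equation*}
 \Tr\big[g_\gamma(H_\hbar)\big] \leq C_{\gamma,d}\, \hbar^{-d} \int_{\R^d} V_-(x)^{\gamma + d/2}\,dx ,
\end{equation*}
valid (for $H_\hbar$ on $L^2(\Omega)$ with Dirichlet conditions, by domain monotonicity of the heat kernel) precisely in the stated ranges of $(\gamma,d)$: this is the Cwikel--Lieb--Rozenblum bound when $\gamma=0,\ d\geq3$, and the Lieb--Thirring bound for $\gamma\geq 1/2,\ d=1$ and $\gamma>0,\ d=2$ and $\gamma\geq0,\ d\geq3$. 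One also records that, integrating out $p$, the phase-space term equals $L^{\mathrm{cl}}_{\gamma,d}(2\pi)^d \int_{\R^d} V_-^{\gamma+d/2}\,dx$, so both sides depend on $V$ only through $V_-$ in $L^{\gamma+d/2}$ and are stable under the truncations below. Finally, the trace $\Tr[g_\gamma(H_\hbar)]$ is monotone decreasing in $V$, which is what makes sandwiching possible.

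Next I would carry out the reductions. (a) Range of $\gamma$: by the Aizenman--Lieb identity $g_\gamma(t) = c_{\gamma,\gamma_0}\int_0^\infty g_{\gamma_0}(t+s)\, s^{\gamma-\gamma_0-1}\,ds$ for $\gamma>\gamma_0\geq0$, one has $\Tr[g_\gamma(H_\hbar)] = c_{\gamma,\gamma_0}\int_0^\infty \Tr[g_{\gamma_0}(H_\hbar+s)]\,s^{\gamma-\gamma_0-1}\,ds$ and the same for the phase-space integrals; since $(V+s)_- = (V_- - s)_+\leq V_-$, the Lieb--Thirring bound supplies an $s$-integrable majorant uniform in $\hbar$ (convergent at $s=\infty$ because $\int(V_--s)_+^{\gamma_0+d/2}\,dx\to0$), so the asymptotics at exponent $\gamma_0$ and all shifted energies $+s$ imply those at exponent $\gamma$; it thus suffices to treat the smallest admissible $\gamma$ in each dimension. (b) Truncate the negative part: with $V^{(n)}=\max(V,-n)$ one has $H_\hbar^{(n)}\geq H_\hbar$, hence $\Tr[g_\gamma(H_\hbar^{(n)})]\leq\Tr[g_\gamma(H_\hbar)]$, and by a Birman--Schwinger / spectral-shift estimate the trace difference is $\lesssim \hbar^{-d}\int_{\{V_->n\}}V_-^{\gamma+d/2}\,dx$, uniformly small in $\hbar$, while the Weyl terms differ by the same kind of quantity. (c) Localize and truncate the positive part: once $V_-$ is bounded, intersecting $\Omega$ with a large ball and applying Dirichlet--Neumann bracketing (the piece outside the ball, where $V\geq-n$ is essentially $V_+\geq0$, is bounded below and contributes nothing) reduces to a bounded region, on which $V_+\in L^1$ and replacing $V$ by $\min(V,n)$ again only lowers the operator with a Birman--Schwinger-controlled increment. (d) Mollify: replace the resulting bounded, compactly supported $V$ by $V*\rho_\epsilon$; since $\|(V*\rho_\epsilon)_- - V_-\|_{L^{\gamma+d/2}}\to0$, treating $|V*\rho_\epsilon-V|$ as a perturbation and invoking Lieb--Thirring again makes the error uniformly small.

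It then remains to prove the leading Weyl asymptotics for $V$ bounded, continuous and compactly supported on a bounded open $\Omega$ with Dirichlet conditions. Here I would use the classical Dirichlet--Neumann bracketing argument: partition $\R^d$ into cubes of side $\ell$ with $\hbar\ll\ell\ll1$, on each cube estimate $V$ from above and below by $\sup_Q V$ and $\inf_Q V$ (which differ by $o(1)$ by uniform continuity), use that for a constant potential on a cube the quantity $\Tr[g_\gamma]$ equals $(2\pi\hbar)^{-d}$ times the corresponding phase-space volume up to a boundary error $O(\hbar^{1-d}\ell^{d-1})$ per cube, and sum over the $O(\ell^{-d})$ cubes meeting $\{V_-\neq0\}$ to get the phase-space integral with error $o(\hbar^{-d})$; the Dirichlet condition on $\partial\Omega$ contributes only a lower-order surface term through the same bracketing, and for rough $\Omega$ one exhausts $\Omega$ from inside and outside by finite unions of cubes, using monotonicity in the domain together with the absolute continuity of the phase-space set function. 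Riesz means with $\gamma>0$ in $d\geq3$ then also follow from the counting case by the Aizenman--Lieb step of (a).

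I expect the principal obstacle to be the \emph{uniformity in $\hbar$} of the approximation errors: each difference $\Tr[g_\gamma(H_\hbar)]-\Tr[g_\gamma(H_\hbar^{\mathrm{approx}})]$ must be bounded by a quantity that is simultaneously $\leq\varepsilon\hbar^{-d}$ for all small $\hbar$ and made small by the approximation. This is exactly the role of the sharp-power ($\hbar^{-d}$) Lieb--Thirring/CLR inequality, and it is why the dimensional and $\gamma$-restrictions enter at all — they delimit precisely the regime where that inequality is available. A secondary technical difficulty is the interplay of the Dirichlet boundary with an unbounded and possibly irregular $\Omega$; this is handled by monotonicity under domain inclusion and by exhaustion, exploiting that $V_-\in L^{\gamma+d/2}(\Omega)$ and that the phase-space term is continuous under such exhaustions.
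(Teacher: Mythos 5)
This statement is not proved in the paper at all: it is Frank's theorem, quoted verbatim from \cite{frank2022weyls} and used later as a black box (to get the a priori bound $\Tr[g_\gamma(H_{\hbar,\varepsilon}^{-})]\leq C\hbar^{-d}$ in Remark~\ref{RE:use_of_thm_1} and to close the localisation estimate in Lemma~\ref{LE:localise_trace}). So there is no in-paper proof to compare against; what I can say is that your outline is, in substance, the proof strategy of the cited reference: sharp-power CLR/Lieb--Thirring bounds to make all approximation errors uniformly $o(\hbar^{-d})$, reduction to bounded, continuous, compactly supported potentials, the classical Weyl asymptotics for such potentials via bracketing/coherent states, and Aizenman--Lieb monotonicity to pass between Riesz exponents. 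The logic of the sandwiching (monotonicity of $\Tr[g_\gamma]$ in $V$ and in the domain, plus stability of the phase-space term under the same truncations) is sound, and you correctly identify that the dimensional restrictions on $\gamma$ enter exactly through the availability of the $\hbar^{-d}$ Lieb--Thirring inequality.

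Two small points. First, your step (a) cannot literally reduce to ``the smallest admissible $\gamma$'' when $d=2$, since the infimum $\gamma=0$ is not admissible there; but this is harmless, because the bracketing argument and the Lieb--Thirring bound are available directly for every fixed $\gamma>0$ in $d=2$, so (a) is only genuinely needed to deduce $\gamma>0$ from $\gamma=0$ in $d\geq3$ if one wishes. Second, the phrase ``Birman--Schwinger / spectral-shift estimate'' for the truncation errors in (b)--(d) is the one place where the argument is not yet a proof: the standard way to make it precise is to split the kinetic energy, $-\hbar^2\Delta+V\geq \tfrac12(-\hbar^2\Delta+2V^{(n)})+\tfrac12(-\hbar^2\Delta+2(V-V^{(n)}))$, use subadditivity of the eigenvalue counting/Riesz means under such splittings, and apply CLR/Lieb--Thirring to the small piece; you should state this explicitly, since without it the uniformity in $\hbar$ of the truncation error is exactly the point at issue.
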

One thing to observe here is that this theorem is also valid, when we are on bounded domains. We will only discuss the case, where the domain is the whole space $\R^d$, $d\geq3$. For results on sharp Weyl laws without full regularity and on bounded domains we refer the reader to the works by Ivrii \cite{MR1974451} and \cite[Vol 1]{ivrii2019microlocal1}.

\subsection{Sharp asymptotics}

We will set up some notation and recall a definition before we give the assumptions for our main theorem and state it. 
 \begin{definition}
Let $f:\R^d\mapsto\R$ be a measurable function. For each $\nu\in\R$ we define the set	
\begin{equation*}
	\Omega_{\nu,f} \coloneqq \big\{ x\in\R^d \,| \, f(x)<\nu \big\}.
\end{equation*}
\end{definition}
\begin{definition}
  For $k$ in $\N$ and $\mu$ in $[0,1]$ and $\Omega\subset\R^d$ open we denote by $C^{k,\mu}(\Omega)$
  the subspace of $C^{k}(\Omega)$ defined by
  \begin{equation*}
  	\begin{aligned}
    C^{k,\mu}(\Omega) = \big\{ f\in C^{k}(\Omega) \, \big| \, \exists C>0&:   |\partial_x^{\alpha} f(x) - \partial_x^{\alpha} f(y) |  \leq C |x-y|^{\mu} 
    \\
    & \forall \alpha \in\N^d \text{ with } \abs{\alpha}=k \text{ and } \forall x,y\in\Omega \big\}.
    \end{aligned}
  \end{equation*}
\end{definition}
These definitions are here to clarify notation. We are now ready to state our assumptions on the potential $V$.
\begin{assumption}\label{Assumption:local_potential}
  Let $V\in L^1_{loc}(\R^d)$ be a real function. Suppose there exists numbers $\nu>0$, $k\in\N_0$ and $\mu\in[0,1]$ such that the set   $\Omega_{4\nu,V}$ is open and bounded and $V\in C^{k,\mu}(\Omega_{4\nu,V})$.
  \end{assumption}
With our assumptions on the potential $V$ in place we can now state the main theorem.
 \begin{thm}\label{Thm:Main}
Let $H_{\hbar} = -\hbar^2 \Delta +V$ be a Schr\"odinger operator acting in $L^2(\R^d)$ and let $\gamma\in[0,1]$. If $\gamma=0$ we assume $d\geq3$ and if $\gamma\in(0,1]$ we assume $d\geq4$. Suppose that $V$ satisfies Assumption~\ref{Assumption:local_potential} with the numbers $\nu>0$ and $k=1$ , $\mu\geq\frac{1}{2}$ if $\gamma=0$ and $k=2$ , $\mu\geq \max(\frac{3}{2}\gamma - \frac{1}{2},0)$ if $\gamma>0$.  
Then it holds that
\begin{equation}\label{EQ:THM:Local_two_derivative}
	  \Big|\Tr\big[g_\gamma(H_\hbar)\big] - \frac{1}{(2\pi\hbar)^d} \int_{\R^{2d}}g_\gamma(p^2+V(x))  \,dx dp \Big| \leq C \hbar^{1+\gamma-d}
\end{equation}
for all $\hbar$ sufficiently small. The constant $C$ depends on the number $\nu$ and the potential $V$.
\end{thm}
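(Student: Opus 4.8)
The plan is to reduce the non-smooth problem to the smooth case via a mollification argument, controlling the error by the Hölder regularity of $V$ near the energy surface. First I would localize: since $\Omega_{4\nu,V}$ is bounded and $V\geq 4\nu$ outside it, the contributions to both the trace and the Weyl integral from the region $\{V\geq 2\nu\}$ (say) are negligible for small $\hbar$ — this uses the fact that $H_\hbar$ restricted there has spectrum bounded below by a positive constant, so a Combes--Thomas / IMS-localization estimate shows the trace of $g_\gamma(H_\hbar)$ picks up only exponentially small (or at least $O(\hbar^\infty)$) mass from there, and the symbol $g_\gamma(p^2+V(x))$ vanishes identically there. So we may work with a potential that agrees with $V$ on a neighborhood of $\{V\le 2\nu\}$ and is nice (say constant $=3\nu$) far away, and in particular we may assume $V\in C^{k,\mu}(\R^d)$ globally with the relevant bounds.

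Next I would introduce a mollified potential $V_\varepsilon = V * \chi_\varepsilon$ with $\chi_\varepsilon(x)=\varepsilon^{-d}\chi(x/\varepsilon)$, and split
\begin{equation*}
  \Tr[g_\gamma(H_\hbar)] - \frac{1}{(2\pi\hbar)^d}\int g_\gamma(p^2+V(x))\,dxdp
  = \mathrm{I} + \mathrm{II} + \mathrm{III},
\end{equation*}
where $\mathrm{I} = \Tr[g_\gamma(H_\hbar)] - \Tr[g_\gamma(H_\hbar^\varepsilon)]$ with $H_\hbar^\varepsilon = -\hbar^2\Delta + V_\varepsilon$, then $\mathrm{II}$ is the smooth Helffer--Robert term $\Tr[g_\gamma(H_\hbar^\varepsilon)] - (2\pi\hbar)^{-d}\int g_\gamma(p^2+V_\varepsilon)\,dxdp$, and $\mathrm{III}$ is the classical phase-space error $(2\pi\hbar)^{-d}\int [g_\gamma(p^2+V_\varepsilon) - g_\gamma(p^2+V)]\,dxdp$. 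For $\mathrm{III}$, since $\|V-V_\varepsilon\|_\infty \lesssim \varepsilon^{k+\mu}$ and $g_\gamma$ is Hölder of order $\gamma$ near $0$ (for $\gamma\le 1$), together with the fact that the measure of $\{|p^2+V(x)|\le \delta\}$ in the relevant bounded $x$-region is $O(\delta)$ when $\gamma=0$ (co-area/implicit function theorem applied to the $C^1$ function, using that $\mu\ge 1/2$ lets one handle the near-critical set) or better when $\gamma>0$, one gets $\mathrm{III} = O(\hbar^{-d}\varepsilon^{(k+\mu)\cdot(\text{something})})$ — the bookkeeping here is the standard "width of the layer times Hölder modulus" estimate and should give $\hbar^{-d}\varepsilon^{k+\mu}$ up to the $\gamma$-dependent improvement. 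For $\mathrm{II}$, apply the sharp smooth result (Helffer--Robert \cite{MR724029, MR1061661}, with the non-criticality removed as in \cite{MR1343781}) to $H_\hbar^\varepsilon$: this gives $O(\hbar^{1+\gamma-d})$ but with a constant that depends on finitely many derivatives of $V_\varepsilon$, which blow up like $\varepsilon^{-(N-k-\mu)}$ for $N$ derivatives; so one must track that dependence and see that the sharp-remainder constant only needs, say, the first $\lceil d/2\rceil + C$ derivatives, giving $\mathrm{II} = O(\hbar^{1+\gamma-d}\varepsilon^{-M})$ for an explicit $M$. Finally $\mathrm{I}$ is estimated by writing $g_\gamma(H_\hbar) - g_\gamma(H_\hbar^\varepsilon)$ via the Helffer--Sjöstrand formula or (for $\gamma=0$) a resolvent/spectral-shift argument, bounding it by $\|V - V_\varepsilon\|_\infty$ times a trace-class norm of a localized resolvent power, which by the $a$ priori Weyl bound is $O(\hbar^{-d})$; so $\mathrm{I} = O(\hbar^{-d}\varepsilon^{k+\mu})$, essentially the same order as $\mathrm{III}$.

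Optimizing in $\varepsilon$ then balances $\hbar^{-d}\varepsilon^{k+\mu}$ against $\hbar^{1+\gamma-d}\varepsilon^{-M}$, giving $\varepsilon \sim \hbar^{(1+\gamma)/(k+\mu+M)}$ and a total error $\hbar^{1+\gamma-d}$ times a positive power of $\hbar$ — wait, to land exactly at $\hbar^{1+\gamma-d}$ one needs the exponent arithmetic to close, which is precisely where the hypotheses $k=1,\mu\ge 1/2$ (for $\gamma=0$) and $k=2$, $\mu\ge\frac{3}{2}\gamma-\frac12$ (for $\gamma>0$) enter: they are exactly the thresholds making $(k+\mu)\cdot\frac{1+\gamma}{k+\mu+M} \ge 1+\gamma$ after the correct accounting (including the $\gamma$-improvement in $\mathrm{III}$). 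The main obstacle I anticipate is controlling $\mathrm{II}$ quantitatively: the classical sharp remainder theorems are stated with qualitative constants, so I would need either to re-examine the Helffer--Robert / Ivrii proof to extract the precise number of derivatives and the precise power of those derivative bounds entering the $O(\hbar^{1-d+\gamma})$ constant, or to prove a self-contained sharp two-term (or one-term-with-sharp-remainder) asymptotic for smooth potentials with explicit semi-norm dependence — this is the technical heart of the paper. A secondary subtlety is the layer-thickness estimate for $\mathrm{III}$ when $\gamma=0$ and $k=1$: one must ensure the set where $|\nabla_x(p^2+V)| = |\nabla V|$ is small is itself thin, which is where $\mu\ge 1/2$ is used (a $C^{1,1/2}$ function has a quantitatively controllable critical set), and the corresponding co-area argument; for $\gamma>0$ with $k=2$ the analogous estimate is easier because $g_\gamma$ is more regular at $0$.
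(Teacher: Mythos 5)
Your overall skeleton (localize away from the classically allowed region, mollify at scale $\varepsilon$, apply a sharp result to the mollified operator, compare phase-space integrals, optimize in $\varepsilon$) matches the paper's strategy, but there is a genuine gap exactly at the point you yourself flag as the technical heart, and it is not a bookkeeping issue: if the sharp remainder for the smoothed operator carries any loss $\varepsilon^{-M}$ with $M>0$, the optimization cannot close. Your own condition $(k+\mu)\cdot\frac{1+\gamma}{k+\mu+M}\geq 1+\gamma$ forces $M\leq 0$; balancing $\hbar^{-d}\varepsilon^{k+\mu}$ against $\hbar^{1+\gamma-d}\varepsilon^{-M}$ always lands strictly above $\hbar^{1+\gamma-d}$ when $M>0$. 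The actual mechanism in the paper is that one takes $\varepsilon=\hbar^{1-\delta}$ and proves a sharp Weyl law for \emph{rough} Schr\"odinger operators (Lemma~\ref{LE:Aux_weyl_law} and Theorem~\ref{THM:Loc_mod_prob}, resting on the propagator construction of \cite{mikkelsen2023sharp}) whose remainder constant depends only on the rescaled seminorms $\varepsilon^{-\min(0,\tau-|\alpha|)}\norm{\partial^\alpha V_\varepsilon}_{L^\infty}$, which are bounded uniformly in $\varepsilon$ — i.e.\ effectively $M=0$, valid precisely because $\varepsilon\geq\hbar^{1-\delta}$ so each extra derivative costs $\hbar\varepsilon^{-1}\leq\hbar^{\delta}$. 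This cannot be obtained by merely "tracking constants" in Helffer--Robert; it requires redoing the parametrix/Tauberian analysis in a mildly exotic symbol class, plus a multiscale argument with $l(x)\sim\sqrt{|V_\varepsilon(x)|^2+\hbar^{4/3}}$ to manufacture a non-critical condition after rescaling. That machinery is also where the hypotheses you tried to explain actually enter: the thresholds $\mu\geq\frac12$ (resp.\ $\mu\geq\frac32\gamma-\frac12$) come from requiring simultaneously $\varepsilon^{k+\mu}=\hbar^{1+\gamma}$ and $\delta\geq\frac13$ (so that the rescaled scales satisfy $l_k\varepsilon^{-1}\leq\varepsilon_k^{-1}$ and the $\hbar^{2/3}$-regularized non-critical condition survives), and the dimension restrictions come from summing $l(x)^{(d-3-\gamma)/2}$ over the partition — not from a critical-set/layer-thickness argument in your term $\mathrm{III}$, which in the paper (Lemma~\ref{LE:comparison_phase_space_int}) gives $C\varepsilon^{k+\mu}$ with no condition on $\mu$.

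A second, more fixable, gap is your term $\mathrm{I}$: for $\gamma=0$ the function $g_0=\boldsymbol{1}_{(-\infty,0]}$ is not smooth (nor continuous), so a Helffer--Sj\"ostrand or naive resolvent-difference bound of the form $\norm{V-V_\varepsilon}_\infty\cdot O(\hbar^{-d})$ does not control $\Tr[g_0(H_\hbar)]-\Tr[g_0(H_\hbar^{\varepsilon})]$ — eigenvalues near $0$ can cross the threshold. The paper avoids comparing the two traces directly: it builds \emph{two-sided} framing operators $H^{\pm}_{\hbar,\varepsilon}=-\hbar^2\Delta+V^1_\varepsilon+V^2\pm C\varepsilon^{k+\mu}$ with $H^-_{\hbar,\varepsilon}\leq H_\hbar\leq H^+_{\hbar,\varepsilon}$, uses min--max monotonicity of $\Tr[g_\gamma(\cdot)]$, and then absorbs the shift $\pm C\varepsilon^{k+\mu}=\pm C\hbar^{1+\gamma}$ through the phase-space comparison lemma. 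You should replace your term $\mathrm{I}$ estimate by this bracketing argument, and replace the "apply Helffer--Robert with tracked constants" step by a genuinely uniform sharp asymptotic for $\varepsilon$-dependent rough potentials at scale $\varepsilon\gtrsim\hbar^{2/3}$; without the latter the proof does not reach the sharp exponent $\hbar^{1+\gamma-d}$.
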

When comparing the assumptions for our main theorem and Theorem~\ref{Thm:Frank} we have that in both we assume the potential to be in $L^1_{loc}(\R^d)$. But in Theorem~\ref{Thm:Frank} the additional assumptions on the potential is on the negative part of $V$, whereas we need to assume regularity for the negative part of $V-4\nu$ for some $\nu>0$. One could have hoped to only have an assumption on the negative part of $V$. However, this does not seem obtainable with the methods we use here. Firstly, the way we prove the theorem require us to have control of the potential just outside the classical allowed region ($\{x\in\R^d \, | \, V(x) \leq 0\}$). Secondly, we have that the constant in \eqref{EQ:THM:Local_two_derivative} will diverge to infinity as $\nu$ tends to zero. Hence, we cannot hope to do an approximation argument.    

The assumptions on dimensions are needed to ensure integrability of some integrals. In the case of Theorem~\ref{Thm:Frank}  there are counter examples to Weyl asymptotics for $V\in L^{\frac{d}{2}}(\R^d)$ for $d=1,2$ for details see \cite{MR1399202,MR1491550}.    

This is not the first work considering sharp Weyl laws without full regularity. The first results in a semicalssical setting was obtained by Ivrii in \cite{MR1807155}, where he also considered higher order differential operators acting in $L^2(M)$, where $M$ is a compact manifold without boundary. In this work the coefficients are assumed to be differentiable and with a H\"older continuous first derivative. This was a generalisation of works by Zielinski who previously had obtained sharp Weyl laws in high energy asymptotics in  \cite{MR1736710,MR1612880,MR1620550,MR1635856}. The results by Ivrii was generalised by Bronstein and Ivrii in \cite{MR1974450}, where they reduced the assumptions further by assuming the first derivative to have modulus of continuity $\mathcal{O}(|\log(x-y)|^{-1})$ and then again by Ivrii in \cite{MR1974451} to also include boundaries and removing the non-critical condition. The non-critical condition, used in cases without full regularity, for a semiclassical pseudo-differential operator $\OpW(a)$ is
\begin{equation}\label{EQ:non-critical_int}
	|\nabla_p a(x,p)|\geq c >0 \qquad\text{for all $(x,p)\in a^{-1}(\{0\})$}.
\end{equation}
In \cite{MR2105486} Zielinski  considers the semiclassical setting with differential operators acting in $L^2(\R^d)$ and proves an optimal Weyl Law under the assumption that all coefficients are one time differentiable with a H\"older continuous derivative. Moreover, it is assumed that the coefficients and the derivatives are bounded. In \cite{MR2105486} it is remarked that it should is possible to consider unbounded coefficients in a framework of tempered variation models. This was generalised by the author in \cite{mikkelsen2022optimal} to allow for the coefficients to be unbounded. Moreover, more general operators where also considered in \cite{mikkelsen2022optimal}. Both of these works assumed a non-critical condition \eqref{EQ:non-critical_int}. This assumption makes the results of \cite{mikkelsen2022optimal} and \cite{MR2105486} not valid for Schr\"odinger operators. Since the assumption is equivalent to assuming that 
\begin{equation}\label{EQ:non-critical_int_2}
	|V(x)|\geq c >0 \qquad\text{for all $x\in \R^d$}.
\end{equation}
The author recently established sharp local spectral asymptotics for magnetic Schr\"odinger operators in \cite{mikkelsen2023sharp}. The techniques used to establish those will be crucial for the results obtained here. The assumptions we make on regularity here is ``lower'' than the regularity assumptions made in \cite{mikkelsen2023sharp}. 

The results obtained by  Bronstein and Ivrii \cite{MR1974450} and Ivrii \cite{MR1974451,MR1807155} do assume less regularity than we do in the present work. However, the techniques used in these works seem to not translate well to a non-compact setting. 

\subsection{Non sharp asymptotics}

The methods we use to establish Theorem~\ref{Thm:Main} can also be used in cases where we have less regularity than we assume in the statement of the theorem. However, if we assume less regularity, we cannot obtain sharp remainder estimates. The results we can obtain are in the following two theorems. 
 \begin{thm}\label{Thm:Main_2}
Let $H_{\hbar} = -\hbar^2 \Delta +V$ be a Schr\"odinger operator acting in $L^2(\R^d)$ with $d\geq3$. Suppose that $V$ satisfies Assumption~\ref{Assumption:local_potential} with the number $\nu>0$, $k=1$ and $0\leq\mu\leq1$.  
Then it holds that
\begin{equation}
	  \Big|\Tr\big[g_0(H_\hbar)\big] - \frac{1}{(2\pi\hbar)^d} \int_{\R^{2d}}g_0(p^2+V(x))  \,dx dp \Big| \leq C \hbar^{\kappa-d}
\end{equation}
for all $\hbar$ sufficiently small, where $\kappa = \min[\frac{2}{3}(1+\mu),1]$. The constant $C$ depends on the number $\nu$ and the potential $V$.
\end{thm}
One can see that for $\mu\geq\frac{1}{2}$ we are in the setting of Theorem~\ref{Thm:Main} and recover the sharp estimate. For the cases where $\mu<\frac{1}{2}$ we cannot currently get optimal error. However, the ``worst'' error we can obtain is $\hbar^{\frac{2}{3}-d}$. This is still a significant improving of the estimate $\hbar^{-d}$. Moreover, since a globally Lipschitz function is almost everywhere differentiable we can with these methods obtain the error $\hbar^{\frac{2}{3}-d}$, when the potential $V$ satisfies Assumption~\ref{Assumption:local_potential} with the number $\nu>0$, $k=0$ and $\mu=1$. The author believes that this case should also have sharp estimates. 
 \begin{thm}\label{Thm:Main_3}
Let $H_{\hbar} = -\hbar^2 \Delta +V$ be a Schr\"odinger operator acting in $L^2(\R^d)$ with $d\geq4$ and let $\gamma\in(0,1]$. Suppose that $V$ satisfies Assumption~\ref{Assumption:local_potential} with the numbers $\nu>0$, $k=2$ and $0\leq \mu\leq1$.  
Then it holds that
\begin{equation}
	  \Big|\Tr\big[g_\gamma(H_\hbar)\big] - \frac{1}{(2\pi\hbar)^d} \int_{\R^{2d}}g_\gamma(p^2+V(x))  \,dx dp \Big| \leq C \hbar^{\kappa-d}
\end{equation}
for all $\hbar$ sufficiently small where $\kappa = \min[\frac{2}{3}(2+\mu),1+\gamma]$. The constant $C$ depends on the number $\nu$ and the potential $V$.
\end{thm}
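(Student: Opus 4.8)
The plan is to run the argument behind Theorem~\ref{Thm:Main} in the case $\gamma\in(0,1]$, $k=2$, with the single modification that, since we no longer assume the H\"older exponent $\mu$ is large enough to push the mollification error below the sharp scale $\hbar^{1+\gamma-d}$, we keep the mollification parameter free and optimise it at the end. The hypothesis $d\geq4$ is used only to guarantee absolute convergence of the phase–space integrals appearing below (the Weyl term and the remainders produced by the functional calculus). As a first reduction, note that $g_\gamma$ is supported in $(-\infty,0]\subset(-\infty,\nu)$ and that $\overline{\Omega_{\nu,V}}$ is compact and contained in the open set $\Omega_{4\nu,V}$; an Agmon/Combes--Thomas estimate for the eigenfunctions of $H_\hbar$ with eigenvalue $\leq0$ (as in the preliminary reductions behind Theorem~\ref{Thm:Frank}) then shows that, with $\chi\in C_c^\infty(\Omega_{4\nu,V})$ equal to $1$ near $\overline{\Omega_{2\nu,V}}$, one has $\Tr[g_\gamma(H_\hbar)]=\Tr[\chi g_\gamma(H_\hbar)\chi]+\mathcal{O}(\hbar^\infty)$, and that $\chi g_\gamma(H_\hbar)\chi$ — and likewise the Weyl term — is unchanged, up to $\mathcal{O}(\hbar^\infty)$, if $V$ is replaced by a compactly supported $W\in C^{2,\mu}(\R^d)$ agreeing with $V$ near $\supp\chi$ and with $W\geq\nu$ elsewhere. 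So it suffices to prove the bound for such a $W$, with constant depending only on $\nu$ and $\|W\|_{C^{2,\mu}}$.

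Next I would mollify: set $W_\epsilon=W*\phi_\epsilon$ with $\phi_\epsilon(x)=\epsilon^{-d}\phi(x/\epsilon)$ a standard mollifier, $H_\hbar^\epsilon=-\hbar^2\Delta+W_\epsilon$, and take $\epsilon=\hbar^{2/3}$. Then $\|W-W_\epsilon\|_{L^\infty}\leq C\epsilon^{2+\mu}$, $\|\partial_x^\alpha W_\epsilon\|_{L^\infty}\leq C_\alpha\epsilon^{\,2+\mu-|\alpha|}$ for $|\alpha|\geq3$, and the derivatives of order $\leq2$ stay bounded. Write the quantity to be estimated as the sum of three pieces: (i) $\Tr[\chi(g_\gamma(H_\hbar)-g_\gamma(H_\hbar^\epsilon))\chi]$; (ii) $\Tr[\chi g_\gamma(H_\hbar^\epsilon)\chi]-(2\pi\hbar)^{-d}\int\chi^2 g_\gamma(p^2+W_\epsilon)\,dxdp$; (iii) $(2\pi\hbar)^{-d}\int\chi^2(g_\gamma(p^2+W_\epsilon)-g_\gamma(p^2+W))\,dxdp$. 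Term (iii) is $\mathcal{O}(\hbar^{-d}\epsilon^{2+\mu})$: although $g_\gamma$ is only H\"older-$\gamma$, its $L^1$-modulus of continuity is Lipschitz (split the $t$-integral at $|t|\sim\|W-W_\epsilon\|_\infty$ and use $|g_\gamma'(t)|\sim(t)_-^{\gamma-1}\in L^1_{loc}$), and the $(x,p)$-integration runs over a bounded set. Term (i) is handled by comparing eigenvalues via min--max ($|\lambda_j(H_\hbar)-\lambda_j(H_\hbar^\epsilon)|\lesssim\epsilon^{2+\mu}$ on the relevant window, the perturbation being effectively bounded by $\epsilon^{2+\mu}$ near $\supp\chi$ after Step~1), together with a rough bound $\#\{\lambda_j(H_\hbar^\epsilon)\in I\}\lesssim\hbar^{-d}(|I|+\hbar)$ for the local density of states of the smooth operator $H_\hbar^\epsilon$ — which follows from a quantitative form of Theorem~\ref{Thm:Main_2} (or from Helffer--Robert applied to $W_\epsilon$); carrying out the resulting sum against the weight $(t)_-^{\gamma-1}$ and using $\epsilon=\hbar^{2/3}$ gives $\mathcal{O}(\hbar^{\kappa-d})$.

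The heart of the matter is term (ii). Since $W_\epsilon$ is smooth this is a genuine semiclassical trace, and I would run the Helffer--Sj\"ostrand / parametrix argument underlying Theorem~\ref{Thm:Main} for the potential $W_\epsilon$, but now bookkeeping the $\epsilon$-dependence of every remainder: writing $g_\gamma(H_\hbar^\epsilon)=\tfrac1\pi\int_\C\bar\partial\tilde g_\gamma(z)(H_\hbar^\epsilon-z)^{-1}L(dz)$ and inserting the semiclassical resolvent parametrix, the principal symbol reproduces the Weyl integral of $W_\epsilon$, the first subprincipal contribution vanishes since any function of the symbol Poisson-commutes with it, and every further term carries a power $\hbar^m$ against derivatives of $W_\epsilon$ of controlled order, hence a factor $\hbar^m\epsilon^{-(m-2-\mu)_+}$, with the $\im z$-singularities absorbed against $\bar\partial\tilde g_\gamma$. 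With $\epsilon=\hbar^{2/3}$ each such contribution is $\mathcal{O}(\hbar^{\kappa-d})$, where $\kappa=\min[\tfrac23(2+\mu),1+\gamma]$ and the cap $1+\gamma$ is the intrinsic sharp error, attained precisely when $\mu\geq\tfrac32\gamma-\tfrac12$, i.e. in the regime of Theorem~\ref{Thm:Main}. Adding (i), (ii), (iii) then yields the claimed estimate.

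The main obstacle is exactly term (ii): one must verify that the parametrix construction for $-\hbar^2\Delta+W_\epsilon$ — a Schr\"odinger operator whose potential has derivatives growing like $\hbar^{-\frac23(|\alpha|-2-\mu)}$, so that its symbol lies in the exotic class $S_{2/3}$ — can be pushed through with all remainders estimated uniformly as $\hbar\to0$, and in particular that the behaviour near the Fermi surface $\{p^2+W_\epsilon=0\}$, where the non-critical condition $|\nabla W_\epsilon|\geq c$ may fail for a Schr\"odinger operator, does not spoil the $\epsilon$-bookkeeping. This is the place where the techniques imported from \cite{mikkelsen2023sharp} and the analysis behind Theorem~\ref{Thm:Main} (including the removal of the non-critical condition) are essential; once they are in hand, the choice $\epsilon=\hbar^{2/3}$ and the resulting exponent follow by the elementary balancing above.
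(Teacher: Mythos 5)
Your overall architecture --- localise via Agmon estimates, mollify the potential at scale $\epsilon=\hbar^{2/3}$, compare phase-space integrals, and prove a Weyl law for the mollified operator --- matches the paper's, and you correctly identify $\epsilon=\hbar^{2/3}$ as the right scale and $\kappa=\min[\tfrac23(2+\mu),1+\gamma]$ as the resulting exponent. But there are two places where the proposal falls short of a proof. First, your term (i) is handled differently from the paper and less robustly: the paper never compares $g_\gamma(H_\hbar)$ and $g_\gamma(H_\hbar^\epsilon)$ directly. Instead it builds \emph{framing} operators $H^{\pm}_{\hbar,\varepsilon}=-\hbar^2\Delta+V^1_\varepsilon+V^2\pm C\varepsilon^{2+\mu}$ with $H^{-}_{\hbar,\varepsilon}\le H_\hbar\le H^{+}_{\hbar,\varepsilon}$ in the sense of quadratic forms, so that by the min--max principle $\Tr[g_\gamma(H^{+}_{\hbar,\varepsilon})]\le\Tr[g_\gamma(H_\hbar)]\le\Tr[g_\gamma(H^{-}_{\hbar,\varepsilon})]$, and the whole problem reduces to asymptotics for the two mollified operators plus the phase-space comparison of Lemma~\ref{LE:comparison_phase_space_int}. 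Your eigenvalue-by-eigenvalue comparison plus a local density-of-states bound would require that density-of-states bound to be proved uniformly near the Fermi level for the rough operator, which is essentially as hard as the theorem itself; the framing trick avoids this entirely.

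Second, and more seriously, the ``heart of the matter'', your term (ii), is exactly where you stop: you note that the non-critical condition may fail on $\{p^2+W_\epsilon=0\}$ and that the symbol lies in an exotic class, and then defer to ``techniques imported from the references''. The paper's actual resolution of this is the multiscale argument: one introduces the scaling function $l(x)=A^{-1}\sqrt{|\varphi_1V_\varepsilon(x)|^2+\hbar^{4/3}}$, covers $\supp\varphi$ by balls $B(x_k,l(x_k))$ via Lemma~\ref{LE:partition_lemma}, and rescales each ball to unit size with effective parameters $h_k=\hbar/(l_kf_k)$ and $\varepsilon_k=h_k^{1-\delta}$; after rescaling, the condition $|\widetilde{\varphi_1V}_\varepsilon(x)|+h_k^{2/3}\ge c$ holds on $B(0,8)$ and the model Theorem~\ref{THM:Loc_mod_prob} applies on each ball. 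The total error is then $\sum_k f_k^{2\gamma}h_k^{1+\gamma-d}\lesssim\hbar^{1+\gamma-d}\int l(x)^{(d-3-\gamma)/2}\,dx$, and it is this summation --- not the convergence of the Weyl integral --- that forces $d\ge4$ when $\gamma>0$. Without this mechanism (or an equivalent one) for handling the degeneration of the non-critical condition, the central estimate in your term (ii) is unproved, so the proposal has a genuine gap at its key step.
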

Again we have that for $\mu\geq \min(\frac{3}{2}\gamma - \frac{1}{2},0)$ we again recover the sharp estimates from Theorem~\ref{Thm:Main}. When considering the result we have obtained here, we have for the case $\gamma=1$ and a $C^3$ assumption sharp error terms and even in the case of $C^2$ assumption we obtain an error of the form $\hbar^{\frac{4}{3}-d}$.

We can also obtain results for $d=2$ for the counting function and $d=2$ and $d=3$ for the Riesz means. These results will also not be sharp. These results are the content of the following two theorems.
 \begin{thm}\label{Thm:Main_4}
Let $H_{\hbar} = -\hbar^2 \Delta +V$ be a Schr\"odinger operator acting in $L^2(\R^2)$. Suppose that $V$ satisfies Assumption~\ref{Assumption:local_potential} with the number $\nu>0$, $k=1$ and $0\leq\mu\leq1$.  
Then it holds that
\begin{equation}
	  \Big|\Tr\big[g_0(H_\hbar)\big] - \frac{1}{(2\pi\hbar)^d} \int_{\R^{2d}}g_0(p^2+V(x))  \,dx dp \Big| \leq C \hbar^{\kappa-2}
\end{equation}
for all $\hbar$ sufficiently small, where $\kappa = \min[\frac{1}{3}(1+2\mu),\frac{2}{3}]$. The constant $C$ depends on the number $\nu$ and the potential $V$.
\end{thm}
 \begin{thm}\label{Thm:Main_5}
Let $H_{\hbar} = -\hbar^2 \Delta +V$ be a Schr\"odinger operator acting in $L^2(\R^d)$ with $d=2$ or $d=3$ and let $\gamma\in(0,1]$. Suppose that $V$ satisfies Assumption~\ref{Assumption:local_potential} with the numbers $\nu>0$, $k=2$ and $0\leq \mu\leq1$.  
Then it holds that
\begin{equation}
	  \Big|\Tr\big[g_\gamma(H_\hbar)\big] - \frac{1}{(2\pi\hbar)^d} \int_{\R^{2d}}g_\gamma(p^2+V(x))  \,dx dp \Big| \leq C \hbar^{\kappa-d}
\end{equation}
for all $\hbar$ sufficiently small where $\kappa = \min[\frac{1}{3}(1+2\mu+d-\gamma),\frac{1}{3}(d+2\gamma)]$. The constant $C$ depends on the number $\nu$ and the potential $V$.
\end{thm}

\subsection{Organisation of the paper}

The current paper is structured as follows. In Section~\ref{SEC:pre} we specify our notation and construct approximating/framing operators. Inspired by these framing operators we define operators that locally behave as rough Schr\"odinger operators in Section~\ref{SEC:aux}. For these operators we establish a sharp Weyl law at the end of the section. This result rely heavily on the results obtained in \cite{mikkelsen2023sharp}.  In Section~\ref{SEC:proof_main}  we first establish a result on localisations of the traces and a comparison of phase-space integrals. We end the section with a proof of the main theorems.
\subsection*{Acknowledgement}
The author is grateful to the Leverhulme Trust for their support via Research Project Grant 2020-037.
\section{Preliminaries}\label{SEC:pre}
We will for an operator $A$ acting in a Hilbert space $\mathscr{H}$ denote the operator norm by $\norm{A}_{\mathrm{op}}$ and the trace norm by $\norm{A}_1$. Moreover, we will in the following use the convention that $\N$ is the strictly positive integers and $\N_0=\N\cup\{0\}$.

Next we will describe the operators we are working with. Under
Assumption~\ref{Assumption:local_potential} we can define the operator
$H_\hbar=-\hbar^2\Delta + V$ as the Friedrichs extension of the
quadratic form given by
\begin{equation*}
  \mathfrak{h}[f,g] = \int_{\R^d} \hbar^2\sum_{i=1}^d \partial_{x_i}f(x) \overline{\partial_{x_i}g(x)}  + V(x)f(x)\overline{g(x)}\;dx, \qquad f,g \in \mathcal{D}(\mathfrak{h}),
\end{equation*}
where
\begin{equation*}
  \mathcal{D}(\mathfrak{h}) = \left\{ f\in L^2(\R^d) | \int_{\R^d} \abs{p}^2 \abs{\hat{f}(p)}^2 \;dp<\infty \text{ and } \int_{\R^d} \abs{V(x)}\abs{f(x)}^2 \;dx <\infty \right\}.
\end{equation*}
In this set up the Friedrichs extension will be unique and self-adjoint see e.g. \cite{MR0493420}.
We will in our analysis use the Helffer-Sj\"ostrand formula. Before we state it we will recall a definition of an almost analytic extension. 
\begin{definition}[Almost analytic exstension]
For $f\in C_0^\infty(\R)$ we call a function $\tilde{f} \in C_0^\infty(\C)$ an almost analytic extension if it has the properties 
\begin{equation*}
	\begin{aligned}
	|\bar{\partial} \tilde{f}(z)| &\leq C_n |\im(z)|^n, \qquad \text{for all $n\in\N_0$}
	\\
	\tilde{f}(t)&=f(t) \qquad \text{for all $t\in\R$},
	\end{aligned}
\end{equation*}
where $\bar{\partial} = \frac12 (\partial_x +i\partial_y)$.
\end{definition}
For how to construct the almost analytic extension for a given $f\in  C_0^\infty(\R)$ see e.g. \cite{MR2952218,MR1735654}. The following theorem is a simplified version of a theorem in \cite{MR1349825}.
\begin{thm}[The Helffer-Sj\"{o}strand formula]\label{THM:Helffer-Sjostrand}
  Let $H$ be a self-adjoint operator acting on a Hilbert space $\mathscr{H}$ and $f$ a function from $C_0^\infty(\R)$. Then the bounded operator $f(H)$ is given by the equation
  \begin{equation*}
    f(H) =- \frac{1}{\pi} \int_\C   \bar{\partial }\tilde{f}(z) (z-H)^{-1} \, L(dz),
  \end{equation*}
  where $L(dz)=dxdy$ is the Lebesgue measure on $\C$ and $\tilde{f}$ is an almost analytic extension of $f$.
\end{thm}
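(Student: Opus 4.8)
The plan is to use the spectral theorem to reduce the claimed operator identity to a scalar identity for each point of the spectrum, and then to recognize that scalar identity as the Cauchy--Pompeiu representation formula --- equivalently, the statement that $(\pi z)^{-1}$ is a fundamental solution of $\bar\partial$.

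First I would check that the right-hand side is a well-defined bounded operator. Since $\tilde f\in C_0^\infty(\C)$ and, for self-adjoint $H$, $\norm{(z-H)^{-1}}_{\mathrm{op}}\le\abs{\im z}^{-1}$ whenever $\im z\neq0$, the defining bound $\abs{\bar\partial\tilde f(z)}\le C_1\abs{\im z}$ (the case $n=1$) shows that $z\mapsto\bar\partial\tilde f(z)(z-H)^{-1}$ is bounded in operator norm on $\supp\tilde f\setminus\R$ and vanishes on $\R$ (where $\bar\partial\tilde f$ vanishes to infinite order), so it is a bounded operator-valued function with compact support and the Bochner integral converges absolutely. Next, writing $H=\int_\R\lambda\,dE(\lambda)$ and $(z-H)^{-1}=\int_\R(z-\lambda)^{-1}\,dE(\lambda)$, I would pair with arbitrary $\varphi,\psi\in\mathscr{H}$ and apply Fubini to the double integral in $(z,\lambda)$ against $L(dz)\otimes\scp{\varphi}{dE(\lambda)\psi}$; absolute convergence follows from the bound of the previous step together with $\abs{(z-\lambda)^{-1}}\le\abs{\im z}^{-1}$. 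Since $f(H)=\int_\R f(\lambda)\,dE(\lambda)$ by the functional calculus, this reduces the theorem to the scalar statement
\begin{equation*}
  f(\lambda)=-\frac1\pi\int_\C\frac{\bar\partial\tilde f(z)}{z-\lambda}\,L(dz),\qquad\lambda\in\R.
\end{equation*}

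To prove the scalar identity I would use $\tilde f|_\R=f$ and establish the more general fact that for every $w\in\C$,
\begin{equation*}
  \tilde f(w)=-\frac1\pi\int_\C\frac{\bar\partial\tilde f(z)}{z-w}\,L(dz).
\end{equation*}
This is the classical computation: $(z-w)^{-1}$ is locally integrable on $\C$ and holomorphic for $z\neq w$, so removing a small disk $D_\varepsilon(w)$ and applying Green's theorem in its $\bar\partial$-form, $\int_\Omega\bar\partial g\,L(dz)=\tfrac1{2i}\int_{\partial\Omega}g\,dz$, to $g(z)=\tilde f(z)/(z-w)$ on a large disk (containing $\supp\tilde f$) minus $D_\varepsilon(w)$ leaves only the contour integral over $\partial D_\varepsilon(w)$, the outer boundary term vanishing by compact support of $\tilde f$. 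Letting $\varepsilon\to0$, dominated convergence handles the area integral, while parametrizing $z=w+\varepsilon e^{i\theta}$ shows the boundary integral tends to $-\pi\tilde f(w)$; rearranging gives the formula, and $w=\lambda\in\R$ completes the argument.

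I expect the only genuinely delicate point to be the justification of the interchange of the spectral integral with the $L(dz)$-integral in the reduction step; the remaining ingredients are the textbook resolvent estimate and the classical Cauchy--Pompeiu argument. Some care is also needed with orientation conventions in Green's theorem so that the prefactor comes out as $-1/\pi$ rather than $+1/\pi$.
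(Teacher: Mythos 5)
Your proof is correct, and it is worth noting that the paper itself offers no argument for this statement: it is quoted as a known result from the cited reference, so there is no internal proof to compare against. Your route --- reduce via the spectral theorem and the resolvent representation $(z-H)^{-1}=\int_\R(z-\lambda)^{-1}\,dE(\lambda)$ to the scalar identity $f(\lambda)=-\pi^{-1}\int_\C\bar\partial\tilde f(z)(z-\lambda)^{-1}\,L(dz)$, and then prove that identity by the Cauchy--Pompeiu/Green's theorem computation with a small disk excised around $w$ --- is exactly the standard proof found in the literature the paper points to. The details you flag are handled correctly: the bound $\abs{\bar\partial\tilde f(z)}\le C\,\abs{\im z}$ together with $\norm{(z-H)^{-1}}_{\mathrm{op}}\le\abs{\im z}^{-1}$ makes the integrand bounded, compactly supported, and continuous (extending by $0$ on $\R$, where $\bar\partial\tilde f$ vanishes to infinite order), so the Bochner integral converges; Fubini against the finite complex measure $\scp{\varphi}{dE(\lambda)\psi}$ is justified by the same uniform bound; and your orientation bookkeeping in Stokes' theorem, with the inner circle traversed clockwise as part of $\partial\Omega$, correctly produces the limit $-\pi\tilde f(w)$ and hence the prefactor $-1/\pi$. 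One could alternatively take the formula as the definition of $f(H)$ and verify consistency with the Borel functional calculus (as is done when extending to operators without a convenient spectral representation), but for a self-adjoint $H$ your direct verification is complete as it stands.
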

\subsection{Construction of framing operators and auxiliary asymptotics}
The crucial part in this construction is  Proposition~\ref{PRO:smoothning_of_func}, for which a proof can be found in either \cite[Proposition 1.1]{MR1974450} or \cite[Proposition 4.A.2]{ivrii2019microlocal1}. 
\begin{proposition}\label{PRO:smoothning_of_func}
  Let $f$ be in $C^{k,\mu}(\R^d)$ for a $\mu$ in $[0,1]$. Then for
  every $\varepsilon >0$ there exists a function $f_\varepsilon$ in
  $C^\infty(\R^d)$ such that
  \begin{equation}\label{EQ:pro.smoothning_of_func_bounds}
    \begin{aligned}
      \abs{\partial_x^\alpha f_\varepsilon(x) -\partial_x^\alpha f(x)
      } \leq{}& C_\alpha \varepsilon^{k+\mu-\abs{\alpha}} \qquad
      \abs{\alpha}\leq k,
      \\
      \abs{\partial_x^\alpha f_\varepsilon(x)} \leq{}& C_\alpha
      \varepsilon^{k+\mu-\abs{\alpha}} \qquad \abs{\alpha}\geq k+1,
    \end{aligned}
  \end{equation}
where $C_\alpha$ is independent of $\varepsilon$, but depends on $f$ for all $\alpha\in \N_0^d$. 
\end{proposition}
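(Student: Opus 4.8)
The plan is the classical mollification argument, with one point that needs care: the mollifier has to be chosen with vanishing moments up to order $k$ so that the estimate captures the full gain $\varepsilon^{k+\mu}$ and not merely $\varepsilon^{k}$. Concretely, I would fix a function $\phi\in C_0^\infty(\R^d)$ with $\int_{\R^d}\phi(y)\,dy=1$ and $\int_{\R^d}y^\beta\phi(y)\,dy=0$ for every multi-index $\beta$ with $1\leq\abs{\beta}\leq k$; such a $\phi$ exists because only finitely many linear moment conditions are imposed, and it can be built from any standard bump function by a finite correction. Set $\phi_\varepsilon(y)=\varepsilon^{-d}\phi(y/\varepsilon)$ and define $f_\varepsilon=f*\phi_\varepsilon$. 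Since $f\in C^k(\R^d)$ is in particular continuous, hence locally integrable, $f_\varepsilon$ is well defined, lies in $C^\infty(\R^d)$, and satisfies $\partial_x^\alpha f_\varepsilon=(\partial_x^\alpha f)*\phi_\varepsilon$ for $\abs{\alpha}\leq k$; for $\abs{\alpha}\geq k+1$ I would write $\alpha=\beta+\gamma$ with $\abs{\beta}=k$, $\abs{\gamma}\geq1$, and distribute the derivatives as $\partial_x^\alpha f_\varepsilon=(\partial_x^\beta f)*(\partial^\gamma\phi_\varepsilon)$. Two elementary facts will be used repeatedly: $\int_{\R^d}y^\beta\phi_\varepsilon(y)\,dy=\varepsilon^{\abs{\beta}}\int_{\R^d}y^\beta\phi(y)\,dy$, and $\int_{\R^d}\partial^\gamma\phi_\varepsilon(y)\,dy=0$ whenever $\abs{\gamma}\geq1$.

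For $\abs{\alpha}\leq k$ I set $m=k-\abs{\alpha}$ and Taylor-expand $\partial_x^\alpha f$ to order $m$ about $x$,
\[
\partial_x^\alpha f(x-y)=\sum_{\abs{\beta}\leq m}\frac{(-y)^\beta}{\beta!}\,\partial_x^{\alpha+\beta}f(x)+R_\alpha(x,y),
\]
where, since the derivatives of $f$ of order $\abs{\alpha}+m=k$ are $\mu$-H\"older, the remainder obeys $\abs{R_\alpha(x,y)}\leq C\abs{y}^{m+\mu}$ with $C$ controlled by the H\"older seminorm of the order-$k$ derivatives of $f$ alone. Integrating this identity against $\phi_\varepsilon$, and using $\int\phi_\varepsilon=1$ together with the vanishing of the moments of orders $1$ through $m\leq k$, all terms drop out except the $\beta=0$ term and the remainder, so $\partial_x^\alpha f_\varepsilon(x)-\partial_x^\alpha f(x)=\int_{\R^d}R_\alpha(x,y)\phi_\varepsilon(y)\,dy$, whence $\abs{\partial_x^\alpha f_\varepsilon(x)-\partial_x^\alpha f(x)}\leq C\int_{\R^d}\abs{y}^{m+\mu}\abs{\phi_\varepsilon(y)}\,dy\leq C_\alpha\varepsilon^{m+\mu}=C_\alpha\varepsilon^{k+\mu-\abs{\alpha}}$. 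This is the first bound; the case $\abs{\alpha}=k$ is just $m=0$ and uses only the H\"older estimate directly.

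For $\abs{\alpha}\geq k+1$, writing $\alpha=\beta+\gamma$ as above and using $\int\partial^\gamma\phi_\varepsilon=0$,
\[
\partial_x^\alpha f_\varepsilon(x)=\int_{\R^d}\big(\partial_x^\beta f(x-y)-\partial_x^\beta f(x)\big)\,\partial^\gamma\phi_\varepsilon(y)\,dy,
\]
and the H\"older bound $\abs{\partial_x^\beta f(x-y)-\partial_x^\beta f(x)}\leq C\abs{y}^\mu$ combined with $\int_{\R^d}\abs{y}^\mu\abs{\partial^\gamma\phi_\varepsilon(y)}\,dy=\varepsilon^{\mu-\abs{\gamma}}\int_{\R^d}\abs{z}^\mu\abs{\partial^\gamma\phi(z)}\,dz$ yields $\abs{\partial_x^\alpha f_\varepsilon(x)}\leq C_\alpha\varepsilon^{\mu-\abs{\gamma}}=C_\alpha\varepsilon^{k+\mu-\abs{\alpha}}$, which is the second bound. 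In every case $C_\alpha$ depends only on $\phi$ and on the H\"older seminorm of the order-$k$ derivatives of $f$, hence on $f$ but not on $\varepsilon$, as required.

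There is really no serious obstacle here: the only ingredient beyond Taylor's theorem with H\"older-controlled remainder and the scaling identities above is the existence of a compactly supported smooth mollifier with prescribed vanishing moments up to order $k$, which is standard. I would also note that the statement is on $\R^d$; were one to want it on a bounded open $\Omega\subset\R^d$ one would first extend $f$ to an element of $C^{k,\mu}(\R^d)$ by a standard extension operator (of Whitney or Stein type) and then mollify, but for the present proposition, stated on all of $\R^d$, that step is unnecessary.
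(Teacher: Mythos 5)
Your proof is correct, and it coincides with the standard argument behind this proposition: the paper itself gives no proof but defers to Bronstein--Ivrii and Ivrii, where the construction is precisely mollification $f_\varepsilon=f*\phi_\varepsilon$ with a compactly supported kernel whose moments of orders $1$ through $k$ vanish, Taylor expansion with H\"older-controlled remainder for $\abs{\alpha}\leq k$, and transfer of the excess derivatives onto the kernel together with $\int\partial^\gamma\phi_\varepsilon=0$ for $\abs{\alpha}\geq k+1$. You correctly identify the one non-routine point (the vanishing moments, without which the $\alpha=0$ bound would saturate at $\mathcal{O}(\varepsilon^{2})$ rather than $\varepsilon^{k+\mu}$), and your constants depend only on the mollifier and the H\"older seminorm of the order-$k$ derivatives, as required.
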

\begin{lemma}\label{LE:framing_operators}
Let $H_{\hbar} = -\hbar^2 \Delta +V$ be a Schr\"odinger operator acting in $L^2(\R^d)$ and suppose that $V$ satisfies Assumption~\ref{Assumption:local_potential} with the numbers $(\nu,k,\mu)$. Then for all $\varepsilon>0$ there exists two framing operators $H_{\hbar,\varepsilon}^{\pm}$ such that
\begin{equation}\label{LEEQ:framing_operators}
	H_{\hbar,\varepsilon}^{-} \leq H_\hbar \leq H_{\hbar,\varepsilon}^{+}
\end{equation}
in the sense of quadratic forms. The operators $H_{\hbar,\varepsilon}^{\pm}$ are explicitly given by $H_{\hbar,\varepsilon}^{\pm} = -\hbar^2 \Delta +V_\varepsilon^{\pm}$, where
\begin{equation}
	V_\varepsilon^{\pm}(x) = V^1_\varepsilon(x) +V^2(x)  \pm C\varepsilon^{k+\mu},
\end{equation}
where the function $V^1_\varepsilon(x) $ is the smooth function from Proposition~\ref{PRO:smoothning_of_func} associated to $V^1=V\varphi$ and $V^2=V(1-\varphi)$. The function $\varphi$ is chosen such that $\varphi\in C_0^\infty(\R^d)$ with $\varphi(x)=1$ for all $x\in \Omega_{3\nu,V}$ and $\supp(\varphi)\subset \Omega_{4\nu,V}$.
Moreover for all $\varepsilon>0$ sufficiently small there exists a $\tilde{\nu}>0$ such that
\begin{equation}\label{LEEQ:framing_operators2}
	\Omega_{4\tilde{\nu},V^{+}_\varepsilon} \cap \supp(V^2) = \emptyset \quad\text{and}\quad \Omega_{4\tilde{\nu},V^{-}_\varepsilon} \cap \supp(V^2) = \emptyset.
\end{equation}
\end{lemma}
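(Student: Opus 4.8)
\textit{Proof proposal.}

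The plan is to first peel off the part of the potential that lives in the region of regularity $\Omega_{4\nu,V}$, smooth \emph{only} that part by means of Proposition~\ref{PRO:smoothning_of_func}, and then read off both the quadratic form bounds \eqref{LEEQ:framing_operators} and the separation property \eqref{LEEQ:framing_operators2} directly from the pointwise estimates \eqref{EQ:pro.smoothning_of_func_bounds}. Throughout, the operators $H_{\hbar,\varepsilon}^{\pm}$ will be defined as the Friedrichs extensions of the forms obtained from $\mathfrak h$ by replacing $V$ with $V_\varepsilon^\pm$.

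\emph{Step 1 (splitting the potential).} Since $V\in C^{k,\mu}(\Omega_{4\nu,V})$ is continuous on the open bounded set $\Omega_{4\nu,V}$, the set $\Omega_{3\nu,V}=\{x\in\Omega_{4\nu,V}\,|\,V(x)<3\nu\}$ is open and relatively compact in $\Omega_{4\nu,V}$, so we may fix $\varphi\in C_0^\infty(\R^d)$ with $0\le\varphi\le1$, $\varphi\equiv1$ on $\Omega_{3\nu,V}$ and $\supp\varphi\subset\Omega_{4\nu,V}$. Put $V^1=V\varphi$ and $V^2=V(1-\varphi)$, so $V=V^1+V^2$. First one checks that $V^1$, extended by $0$ off $\supp\varphi$, belongs to $C^{k,\mu}(\R^d)$ and is bounded: on $\Omega_{4\nu,V}$ it is the product of a $C^{k,\mu}$ function with a compactly supported smooth function, and the global H\"older estimate on the derivatives of order $k$ follows by treating $|x-y|$ small (both points stay in $\Omega_{4\nu,V}$, use the Leibniz rule and the H\"older bound for $\partial^\alpha V$) and $|x-y|$ bounded below (use $L^\infty$ bounds) separately. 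Second, since $\varphi\equiv1$ on $\Omega_{3\nu,V}=\{V<3\nu\}$ we have $\supp(1-\varphi)\subset\{V\ge3\nu\}$, whence $V^2\ge3\nu(1-\varphi)\ge0$ everywhere and, crucially, $V\ge3\nu$ on $\supp V^2$.

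\emph{Step 2 (the framing operators and the two conclusions).} Apply Proposition~\ref{PRO:smoothning_of_func} to $V^1\in C^{k,\mu}(\R^d)$ to obtain, for each $\varepsilon>0$, a function $V^1_\varepsilon\in C^\infty(\R^d)$ satisfying \eqref{EQ:pro.smoothning_of_func_bounds}; in particular $|V^1_\varepsilon-V^1|\le C_0\varepsilon^{k+\mu}$ and $V^1_\varepsilon\in L^\infty(\R^d)$. Fix $C\ge C_0$, set $V_\varepsilon^\pm=V^1_\varepsilon+V^2\pm C\varepsilon^{k+\mu}$ and $H_{\hbar,\varepsilon}^\pm=-\hbar^2\Delta+V_\varepsilon^\pm$. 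The difference $V_\varepsilon^\pm-V=(V^1_\varepsilon-V^1)\pm C\varepsilon^{k+\mu}$ is a bounded function, so $H_{\hbar,\varepsilon}^\pm$ is $H_\hbar$ plus a bounded multiplication operator; hence it is self-adjoint on $\mathcal D(H_\hbar)$, semibounded, and its form has domain $\mathcal D(\mathfrak h)$. By the choice $C\ge C_0$ one has $V^1_\varepsilon-V^1-C\varepsilon^{k+\mu}\le0\le V^1_\varepsilon-V^1+C\varepsilon^{k+\mu}$, i.e. $V_\varepsilon^-\le V\le V_\varepsilon^+$ pointwise a.e.; testing the three forms against $f\in\mathcal D(\mathfrak h)$ gives \eqref{LEEQ:framing_operators}. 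For \eqref{LEEQ:framing_operators2}, for $x\in\supp V^2$ we use $V^1+V^2=V$ together with $V\ge3\nu$ there to get
\[
V_\varepsilon^\pm(x)=V(x)+\big(V^1_\varepsilon(x)-V^1(x)\big)\pm C\varepsilon^{k+\mu}\ \ge\ 3\nu-2C\varepsilon^{k+\mu}.
\]
Choosing $\varepsilon$ small enough that $2C\varepsilon^{k+\mu}\le\nu$ yields $V_\varepsilon^\pm\ge2\nu$ on $\supp V^2$, so with any $\tilde\nu\in(0,\nu/2]$ the sets $\Omega_{4\tilde\nu,V_\varepsilon^\pm}=\{V_\varepsilon^\pm<4\tilde\nu\}$ are disjoint from $\supp V^2$.

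\emph{Main obstacle.} The argument is soft once the reduction is in place, so the only points needing care are in Step~1: verifying that $V\varphi$ is a genuine element of $C^{k,\mu}(\R^d)$ — not merely of $C^{k,\mu}(\Omega_{4\nu,V})$ — with constants depending only on $V$ and $\varphi$ (so that Proposition~\ref{PRO:smoothning_of_func} applies globally), and arranging the cut-off $\varphi$ with the stated support properties. The second delicate bookkeeping point is that in Step~2 it is $V$ itself, and not merely the smoothed piece $V^1_\varepsilon$, that remains $\ge3\nu$ on $\supp V^2$; this is precisely what makes the separation \eqref{LEEQ:framing_operators2} survive the regularisation, and it is used in the form $V_\varepsilon^\pm=V+\mathcal O(\varepsilon^{k+\mu})$ on $\supp V^2$.
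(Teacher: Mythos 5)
Your proposal is correct and follows essentially the same route as the paper: split $V=V\varphi+V(1-\varphi)$, smooth only $V^1=V\varphi$ via Proposition~\ref{PRO:smoothning_of_func}, absorb the $L^\infty$-error $\mathcal O(\varepsilon^{k+\mu})$ into the constant shift to get the form inequalities, and use that $V\ge 3\nu$ on $\supp V^2$ together with $\|V-V_\varepsilon^\pm\|_{L^\infty}\le C\varepsilon^{k+\mu}$ to obtain \eqref{LEEQ:framing_operators2} with $\tilde\nu\le\nu/2$. Your added detail (checking $V\varphi\in C^{k,\mu}(\R^d)$ and arguing the pointwise ordering $V_\varepsilon^-\le V\le V_\varepsilon^+$ rather than estimating the form difference) is a harmless elaboration of the same argument.
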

\begin{proof}
We start by letting $\varphi$ be as given in the statement of the lemma and set $V^1=V\varphi$ and $V^2=V(1-\varphi)$. By assumption we have that $V^1\in C^{k,\mu}_0(\R^d)$. Hence for all $\varepsilon>0$ we get from Proposition~\ref{PRO:smoothning_of_func} the existence of $V^1_\varepsilon(x)$ such that \eqref{EQ:pro.smoothning_of_func_bounds} is satisfied with $f$ replaced by $V^1$. We now let 
\begin{equation*}
	H_{\hbar,\varepsilon}= -\hbar^2\Delta + V^1_\varepsilon + V^2.
\end{equation*}
This operator is well defined and selfadjoint since both potentials are in $L^1_{loc}(\R^d)$. Moreover we have that $H_{\hbar,\varepsilon}$ and $H_{\hbar}$ will have the same domains. Let $f\in \mathcal{D}[H_\hbar]$ we then have that
\begin{equation}\label{EQ:framing_operators}
	\begin{aligned}
	\big|\langle H_\hbar f, f  \rangle -  \langle H_{\hbar,\varepsilon} f, f  \rangle\big| &=\big| \langle (V^1 - V^1_\varepsilon) f, f  \rangle \big|
	\\
	&\leq \norm{V^1 - V^1_\varepsilon}_{L^\infty(\R^d)} \norm{f}^2_{L^2(\R^d)} \leq c\varepsilon^{k+\mu} \norm{f}^2_{L^2(\R^d)}.
	\end{aligned}
\end{equation}
From choosing a sufficiently large constant $C$ we get from \eqref{EQ:framing_operators} that by letting  $H_{\hbar,\varepsilon}^{\pm} = -\hbar^2 \Delta +V_\varepsilon^{\pm}$ with  $V_\varepsilon^{\pm}(x) = V^1_\varepsilon(x) +V^2(x)  \pm C\varepsilon^{k+\mu}$ we have that  \eqref{LEEQ:framing_operators} is satisfied with this choice of operators. 

What remains is to establish \eqref{LEEQ:framing_operators2}. We have by construction that
\begin{equation}
	\rVert V-V_\varepsilon^{\pm}\rVert_{L^\infty(\R^d)} \leq C \varepsilon^{k+\mu}.
\end{equation}
Hence if we choose $\tilde{\nu}\leq\frac{\nu}{2}$ and $\varepsilon$ is sufficiently small we can ensure that $\Omega_{4\tilde{\nu},V^{+}_\varepsilon}\subset \Omega_{3\nu,V}$ and $\Omega_{4\tilde{\nu},V^{-}_\varepsilon}\subset \Omega_{3\nu,V}$. Since we have that $\supp(V^2) \subset \Omega_{3\nu,V}^c$ by construction it follows that with such a choice of $\tilde{\nu}$ and for $\varepsilon$ sufficiently small we have  \eqref{LEEQ:framing_operators2} is true.
This concludes the proof.
\end{proof}
\begin{remark}
We will in what follows for $\varepsilon >0$ call a potential $V_\varepsilon\in C_0^\infty(\R^d)$ a rough potential of regularity $\tau\geq0$ if
\begin{equation*}
	\begin{aligned}
	\sup_{x\in\R^d} \big| \partial_x^\alpha V_\varepsilon(x)\big| \leq C_\alpha \varepsilon^{\min(0,\tau - |\alpha|)} \quad\text{for all $\alpha\in\N_0^d$},
	 \end{aligned}
\end{equation*}
where the constants $C_\alpha$ are independent of $\varepsilon$. Moreover, we  denote by a rough Schr\"odinger operator of regularity $\tau\geq0$ an operator of the form 
\begin{equation*}
	H_{\hbar,\varepsilon}=-\hbar^2\Delta + V + V_\varepsilon,
\end{equation*}
where $V\in L^1_{loc}(\R^d)$ and $V_\varepsilon$ is a rough potential of regularity $\tau$.
\end{remark}
\begin{remark}\label{RE:use_of_thm_1}
Assume we are in the setting of Lemma~\ref{LEEQ:framing_operators} then it follows from Theorem~\ref{Thm:Frank} that there exists a constant $C>0$ such that
\begin{equation*}
	\Tr\big[g_\gamma(H_{\hbar,\varepsilon}^{+} )\big]  \leq \Tr\big[g_\gamma(H_\hbar )\big] \leq \Tr\big[g_\gamma(H_{\hbar,\varepsilon}^{-}  )\big] \leq C\hbar^{-d} 
\end{equation*}
for $\hbar>0$, $\varepsilon>0$ sufficiently small. The constant $C$ only depends on the dimension, the set $\Omega_{4\nu,V}$ and $\min(V)$. The two first inequalities follows from the min-max principle. For the third inequality we can choose a potential $V^{min}$ such that
\begin{equation*}
	V^{min}(x) = 
	\begin{cases}
	 \min(V) - 1 &\text{if $x\in \Omega_{4\nu,V}$}
	 \\
	 0 & \text{if $x\notin \Omega_{4\nu,V}$}.
	\end{cases}
\end{equation*}
Then when we consider the operator $H_\hbar^{min}=-\hbar^2 \Delta + V^{min}$, defined as a Friedrichs extension of the associated form, we have that
\begin{equation*}
	H_\hbar^{min}\leq H_{\hbar,\varepsilon}^{-}
\end{equation*}
in the sense of quadratic forms. Hence using the min-max principle and Theorem~\ref{Thm:Frank} we obtain that
\begin{equation}
	\begin{aligned}
	\Tr\big[g_\gamma(H_{\hbar,\varepsilon}^{-}  )\big] \leq {}& \Tr\big[g_\gamma(H_\hbar^{min}  )\big]
	\\
	\leq {}&  \frac{1}{(2\pi\hbar)^d} \int_{\R^{2d}}g_\gamma(p^2+V^{min}(x))  \,dx dp + \tilde{C} \hbar^{-d} \leq C\hbar^{-d},
	\end{aligned}
\end{equation}
where the constant $C$ only depends on the dimension, the set $\Omega_{4\nu,V}$ and $\min(V)$.
\end{remark}
\section{Auxiliary results and model problem}\label{SEC:aux}
Inspired by the form of the framing operators we will make the following assumption. This assumption is essentially the assumption that appears in \cite{MR1343781} but with a rough potential and no magnetic field.
\begin{assumption}\label{Assumption:local_potential_1}
 Let $\mathcal{H}_{\hbar,\varepsilon}$ be an operator acting in $L^2(\R^d)$, where $\hbar,\varepsilon>0$. Suppose that
\begin{enumerate}[label={$\roman*)$}]
  \item\label{G.L.1.1}  $\mathcal{H}_{\hbar,\varepsilon}$ is self-adjoint and lower semibounded.
  \item\label{G.L.1.2}  Suppose there exists an open set $\Omega\subset\R^d$ and a rough potential $V_\varepsilon\in C^{\infty}_0(\R^d)$ of regularity $\tau\geq0$ such that $C_0^\infty(\Omega)\subset\mathcal{D}(\mathcal{H})$ and
\begin{equation*}
	\mathcal{H}_{\hbar,\varepsilon} \varphi = H_{\hbar,\varepsilon}\varphi \quad\text{for all $\varphi\in C_0^\infty(\Omega)$},
\end{equation*}
where $H_{\hbar,\varepsilon}= -\hbar^2\Delta  + V_\varepsilon$.
  \end{enumerate}
\end{assumption}
For these operators we will establish our model problem.
The first auxiliary result we will need was established in \cite{mikkelsen2023sharp}, where it is Lemma~{4.6}. It is almost the full model problem except we consider only the operator $H_{\hbar,\varepsilon}$ and not the general operator  $\mathcal{H}_{\hbar,\varepsilon}$.
\begin{lemma}\label{LE:Aux_weyl_law}
Let $\gamma\in[0,1]$ and $H_{\hbar,\varepsilon} = -\hbar^2\Delta +V_\varepsilon $ be a rough Schr\"odinger operator acting in $L^2(\R^d)$ of regularity $\tau\geq1$ if $\gamma=0$ and regularity $\tau\geq2$ if $\gamma>0$ with $\hbar\in(0,\hbar_0]$, $\hbar_0$ sufficiently small. Assume that $V_\varepsilon \in C_0^\infty(\R^d)$ and there exists a $\delta\in(0,1]$ such that $\varepsilon\geq\hbar^{1-\delta}$. Suppose there is an open set $\Omega \subset \supp(V_\varepsilon)$ and a $c>0$ such that 
\begin{equation*}
	|V_\varepsilon(x)| +\hbar^{\frac{2}{3}} \geq c \qquad\text{for all $x\in \Omega$}.
\end{equation*}
Then for $\varphi\in C_0^\infty(\Omega)$ it holds that
\begin{equation*}
	  \Big|\Tr\big[\varphi g_\gamma(H_{\hbar,\varepsilon})\big] - \frac{1}{(2\pi\hbar)^d} \int_{\R^{2d}}g_\gamma(p^2+V_\varepsilon(x))\varphi(x)  \,dx dp \Big| \leq C \hbar^{1+\gamma-d},
\end{equation*}
where the constant $C$ depends only on the dimension and the numbers $\gamma$,  $\norm{\partial^\alpha \varphi}_{L^\infty(\R^d)}$ and $\varepsilon^{-\min(0,\tau-|\alpha|)}\norm{ \partial^\alpha V_\varepsilon}_{L^\infty(\Omega)}$ for all $\alpha\in N_0^d$.
\end{lemma}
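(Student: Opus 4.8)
The plan is to reduce the claim to a semiclassical Tauberian argument for the spectral function of $H_{\hbar,\varepsilon}$ localised by $\varphi$, following the scheme of the author's magnetic result \cite{mikkelsen2023sharp}, of which the present statement is the field-free specialisation. The first observation is that, for $\hbar$ small, the hypothesis $|V_\varepsilon(x)|+\hbar^{2/3}\geq c$ on $\Omega$ is precisely a non-criticality condition of the type \eqref{EQ:non-critical_int} for the symbol $a(x,p)=p^2+V_\varepsilon(x)$ on the part of $a^{-1}(\{0\})$ lying over $\Omega$: if $p^2+V_\varepsilon(x)=0$ with $x\in\Omega$, then $|\nabla_p a(x,p)|^2=4|p|^2=4|V_\varepsilon(x)|\geq 4(c-\hbar^{2/3})\geq 2c$ as soon as $\hbar^{2/3}\leq c/2$. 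Hence the Hamiltonian flow of $a$ has no fixed points on the zero energy shell over $\Omega$, which is exactly what the wave-trace machinery needs at the energy $0$. Since $V_\varepsilon\in C_0^\infty(\R^d)$ is bounded, $\varphi g_\gamma(H_{\hbar,\varepsilon})$ is trace class, so the left-hand side is well defined.

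Next I would split $g_\gamma=\chi g_\gamma+(1-\chi)g_\gamma$ with $\chi\in C_0^\infty(\R)$ equal to $1$ near $0$ and with small support, and treat the two pieces differently. For $\varphi(1-\chi)g_\gamma(H_{\hbar,\varepsilon})$ the relevant energies are bounded away from $0$; writing this operator through the Helffer--Sj\"ostrand formula (Theorem~\ref{THM:Helffer-Sjostrand}) applied to a $C_0^\infty$ function agreeing with $(1-\chi)g_\gamma$ on the spectrum of $H_{\hbar,\varepsilon}$, and inserting the semiclassical symbol calculus for the resolvent $(z-H_{\hbar,\varepsilon})^{-1}$, one obtains an asymptotic expansion of $\Tr[\varphi(1-\chi)g_\gamma(H_{\hbar,\varepsilon})]$ whose leading term is the corresponding phase-space integral and whose first correction involves second derivatives of $V_\varepsilon$. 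In the smooth case that correction is $O(\hbar^{2-d})$; here the $\hbar^{2k}$-correction carries derivatives of $V_\varepsilon$ of order up to $2k$, bounded by $C\varepsilon^{\min(0,\tau-2k)}\leq C\hbar^{-(1-\delta)(2k-\tau)}$, so after the $\hbar^{2k}$ prefactor it is $O(\hbar^{2k\delta+(1-\delta)\tau-d})$, which for $\tau\geq1$ (when $\gamma=0$), respectively $\tau\geq2$ (when $\gamma>0$), stays within $O(\hbar^{1+\gamma-d})$; thus the roughness does not spoil this piece, and the remainder after truncating the expansion is handled by the usual $\hbar^{-N}$-versus-high-order-term estimate.

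The heart of the matter is $\varphi\chi g_\gamma(H_{\hbar,\varepsilon})$, the contribution of the critical energy $0$. Here I would take $\rho\in C_0^\infty(\R)$ with $\hat\rho$ supported in a small neighbourhood of $0$ and $\hat\rho\equiv1$ near $0$, and estimate the smoothed spectral measure via
\begin{equation*}
\Tr\big[\varphi\,\rho\big((\lambda-H_{\hbar,\varepsilon})/\hbar\big)\big] = \frac{1}{2\pi}\int_\R \hat\rho(t)\,e^{it\lambda/\hbar}\,\Tr\big[\varphi\,e^{-itH_{\hbar,\varepsilon}/\hbar}\big]\,dt,
\end{equation*}
for $|\lambda|\leq c_0$, where for $|t|\leq T_0$ the propagator $e^{-itH_{\hbar,\varepsilon}/\hbar}$ is replaced by a semiclassical Fourier integral parametrix, the phase solving the Hamilton--Jacobi equation for $a=p^2+V_\varepsilon$ and the amplitudes solving the transport equations; this parametrix is available on $\supp\varphi$ near energy $0$ precisely because of the non-criticality established above. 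Carrying out the $(x,p)$-integration by stationary phase yields the leading Weyl density plus corrections, with the roughness of $V_\varepsilon$ entering the eikonal and transport equations through the $\varepsilon^{-m}$ derivative bounds; the hypotheses $\varepsilon\geq\hbar^{1-\delta}$ and $\tau\geq1$ (resp. $\tau\geq2$) keep these corrections subleading over the window $|t|\leq T_0$. One then applies the semiclassical Tauberian theorem (as in \cite{MR1343781} and \cite{mikkelsen2023sharp}) to convert this control of the smoothed spectral measure into the sharp estimate for $\Tr[\varphi g_\gamma(H_{\hbar,\varepsilon})]$, the passage from $\gamma=0$ to $\gamma>0$ producing the extra factor $\hbar^\gamma$ from the weaker singularity of $(t)_-^\gamma$ at $0$. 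The main obstacle is exactly this parametrix step: one must propagate the $\varepsilon$-dependent bounds on $V_\varepsilon$ through the Hamilton--Jacobi and transport equations and through the stationary-phase expansion, and verify that the accumulated error stays within $O(\hbar^{1+\gamma-d})$ — this is the technical core, and it is carried out in the magnetic setting in \cite{mikkelsen2023sharp}, from which the present lemma follows by taking the magnetic field to be zero.
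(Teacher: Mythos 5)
Your proposal is essentially the paper's own treatment: the paper does not prove this lemma at all but imports it verbatim as Lemma~4.6 of \cite{mikkelsen2023sharp}, and your sketch (non-criticality extracted from $|V_\varepsilon(x)|+\hbar^{2/3}\geq c$ on the zero-energy shell, a short-time propagator parametrix extended to a fixed window $[-T_0,T_0]$ via that non-criticality, stationary phase, and then the Tauberian step producing the extra factor $\hbar^{\gamma}$) is exactly the strategy of that reference as summarised in Remark~\ref{RE:propagator}, with the technical core deferred to the same citation. So the approach matches the paper's; the only caveat is that, like the paper, you ultimately rest on the cited magnetic result specialised to zero field rather than giving a self-contained argument.
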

What remains in order for us to be able to prove our model problem is to establish that $\Tr\big[\varphi g_\gamma(H_{\hbar,\varepsilon})\big] $ and $\Tr\big[\varphi g_\gamma(\mathcal{H}_{\hbar,\varepsilon})\big] $ are close. To do this we will need some additional notation and results.
\begin{remark}\label{RE:propagator}
In order to prove Lemma~\ref{LE:Aux_weyl_law} as done in \cite{mikkelsen2023sharp} one needs to understand the Schr\"odinger propagator $e^{i\hbar^{-1}t H_{\hbar,\varepsilon}}$ associated to $H_{\hbar,\varepsilon}$. Under the assumptions of the lemma we can find an operator with an explicit kernel that locally approximate $e^{i\hbar^{-1}t H_{\hbar,\varepsilon}}$ in a suitable sense. This local construction is only valid for times of order $\hbar^{1-\frac{\delta}{2}}$. But if we locally have a non-critical condition the approximation can be extended to a small time interval $[-T_0,T_0]$, where $T_0$ is independent of $\hbar$. For further details see \cite{mikkelsen2022optimal}. In the following we will reference this remark and the number $T_0$. 
\end{remark}
\begin{remark}\label{RE:mollyfier_def}
Let $T\in(0,\min(T_0,T_0')]$ and $\hat{\chi}\in C_0^\infty((-T,T))$ be a real valued function such that $\hat{\chi}(s)=\hat{\chi}(-s)$ and $\hat{\chi}(s)=1$ for all $t\in(-\frac{T}{2},\frac{T}{2})$. Here $T_0$ is the number from Remark~\ref{RE:propagator} and $T_0'$ is the number from Remark~\ref{remark_small_T}. Define
\begin{equation*}
	\chi_1(t) = \frac{1}{2\pi} \int_{\R} \hat{\chi}(s) e^{ist} \,ds.
\end{equation*}
We assume that $\chi_1(t)\geq 0$ for all $t\in\R$ and there exist $T_1\in(0,T)$ and $c>0$ such that $\chi_1(t)\geq c$ for all $t\in[-T_1,T_1]$. We can guarantee these assumptions by (possible) replacing $\hat{\chi}$ by $\hat{\chi}*\hat{\chi}$. We will by $\chi_\hbar(t)$ denote the function
\begin{equation*}
	\chi_\hbar(t) = \tfrac{1}{\hbar} \chi_1(\tfrac{t}{\hbar}).
\end{equation*}
Moreover for any function $g\in L^1_{loc}(\R)$ we will use the notation
\begin{equation*}
	g^{(\hbar)}(t) =g*\chi_\hbar(t) = \int_\R g(s) \chi_\hbar(t-s).
\end{equation*}
\end{remark}
Before we proceed we will just recall the following classes of functions. These did first appear in \cite{MR1272980}.
\begin{definition}
A function $g\in C^\infty(\R\setminus\{0\})$ is said to belong to the class $C^{\infty,\gamma}(\R)$, $\gamma\in[0,1]$, if $g\in C(\R)$ for $\gamma>0$, for some constants $C >0$ and $r>0$ it holds that
\begin{equation*}
	\begin{aligned}
	g(t) &= 0, \qquad\text{for all $t\geq C$}
	\\
	|\partial_t^m g(t)| &\leq C_m |t|^r, \qquad\text{for all $m\in\N_0$ and $t\leq -C$}
	\\
	|\partial_t^m g(t)| &\leq
	\begin{cases} 
		C_m  & \text{if $\gamma=0,1$} \\  
		C_m|t|^{\gamma-m} &\text{if $\gamma\in(0,1)$} 
	\end{cases}, 
	\qquad\text{for all $m\in\N$ and $t\in [ -C,C]\setminus\{0\}$}.
	\end{aligned}
\end{equation*}
A function $g$ is said to belong to $C^{\infty,\gamma}_0(\R)$ if $g\in C^{\infty,\gamma}(\R)$ and $g$ has compact support.
\end{definition}
With this notation sat up we recall the following Tauberian type result.  This result can be found in \cite{MR1343781}, where it is Proposition~{2.8}.
\begin{proposition}\label{PRO:Tauberian}
Let $A$ be a selfadjoint operator acting in a Hilbert space $\mathscr{H}$ and $g\in C_{0}^{\infty,\gamma}(\R)$. Let $\chi_1$ be defined as in Remark~\ref{RE:mollyfier_def}. If for a Hilbert-Schmidt operator $B$
\begin{equation}\label{EQ:PRO:Tauberian_1}
	\sup_{t\in \mathcal{D}(\delta)} \norm { B^{*}\chi_\hbar (A-t) B}_1 \leq Z(\hbar),
\end{equation}
where $\mathcal{D}(\delta) = \{ t \in\R \,|\, \dist(\supp(g)),t)\leq \delta \}$, $Z(\beta)$ is some positive function and strictly positive number $\delta$. Then it holds that
\begin{equation}
	\norm { B^{*}(g(A)-g^{(\hbar)}(A)) B}_1 \leq C \hbar^{1+\gamma} Z(\hbar) + C_{N}' \hbar^N \norm{B^{*}B}_1 \quad\text{for all $N\in\N$},
\end{equation}
where the constants $C$ and $C'$ depend on the number $\delta$ and the functions $g$ and $\chi_1$ only.
\end{proposition}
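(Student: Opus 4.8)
The proof reduces everything to a scalar Tauberian estimate for the non-decreasing function
\[
 n(t) := \Tr\big[B^{*}\boldsymbol{1}_{(-\infty,t]}(A)B\big],\qquad F(t):=B^{*}\boldsymbol{1}_{(-\infty,t]}(A)B ,
\]
where $F$ is a non-decreasing family of positive operators with $F(t)\to 0$ as $t\to-\infty$ and $\norm{F(t)}_1=n(t)\le Q:=\norm{B^{*}B}_1$. First I would record that, since $\chi_1\ge 0$, the operator $B^{*}\chi_\hbar(A-t)B$ is positive, so hypothesis \eqref{EQ:PRO:Tauberian_1} says precisely that $(n'\ast\chi_\hbar)(t)\le Z(\hbar)$ for $t\in\mathcal D(\delta)$. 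Next, for $s>0$ the operator $F(t)-F(t-s)=B^{*}\boldsymbol{1}_{(t-s,t]}(A)B$ is positive, hence $\norm{F(t)-F(t-s)}_1=\abs{n(t)-n(t-s)}$, and the same identity holds for $s<0$; combined with $\chi_\hbar\ge 0$ this yields
\[
 \norm{F(t)-(F\ast\chi_\hbar)(t)}_1\le\int_\R \chi_\hbar(s)\,\abs{n(t)-n(t-s)}\,ds .
\]
So the operator problem is governed by the scalar monotone function $n$.

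The analytic heart is the sub-lemma: for $t,s$ in a fixed neighbourhood of $\supp g$ one has $\abs{n(t)-n(s)}\le C(\abs{t-s}+\hbar)Z(\hbar)+C_N\hbar^{N}Q$. I would prove this from the two structural properties of $\chi_1$ in Remark~\ref{RE:mollyfier_def}: the bound $\chi_1\ge c$ on $[-T_1,T_1]$ gives $n(\sigma+T_1\hbar)-n(\sigma-T_1\hbar)\le c^{-1}\hbar\,(n'\ast\chi_\hbar)(\sigma)\le c^{-1}\hbar Z(\hbar)$, and chaining this over $O(\abs{t-s}/\hbar)$ overlapping windows that stay inside $\mathcal D(\delta)$ (this is where $\delta>0$ enters) produces $\abs{n(t)-n(s)}\lesssim(\abs{t-s}+\hbar)Z(\hbar)$; the contribution of $\abs{t-s}\gtrsim\rho$ for a fixed small $\rho$ is absorbed into $C_N\hbar^{N}Q$ using $n\le Q$ and the rapid decay of $\chi_1$. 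Inserting this in the previous display gives at once $\norm{F(t)-(F\ast\chi_\hbar)(t)}_1\le C\hbar Z(\hbar)+C_N\hbar^{N}Q$ for $t$ near $\supp g$.

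To finish I would first truncate: writing $g^{(\hbar)}=\psi g^{(\hbar)}+(1-\psi)g^{(\hbar)}$ with $\psi\in C_0^\infty$ equal to $1$ on $\mathcal D(\delta/2)$, the term $(1-\psi)g^{(\hbar)}$ contributes only $C_N\hbar^{N}Q$, because $\norm{(1-\psi)g^{(\hbar)}}_{L^\infty}\le C_N\hbar^{N}$, so all remaining integrals run over a bounded set. For $\gamma=0$ one integrates by parts once:
\[
 \norm{B^{*}(g(A)-g^{(\hbar)}(A))B}_1\le\int_\R\abs{g'(t)}\,\norm{F(t)-(F\ast\chi_\hbar)(t)}_1\,dt\le C\hbar Z(\hbar)+C_N\hbar^{N}Q ,
\]
which is the claim since $1+\gamma=1$. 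For $\gamma\in(0,1]$ one must gain an extra factor $\hbar^{\gamma}$. On $\abs{t}\le\hbar$ one estimates directly, using $\abs{g'(t)}\le C\abs{t}^{\gamma-1}$ and $\int_{\abs{t}\le\hbar}\abs{t}^{\gamma-1}\,dt\simeq\hbar^{\gamma}$, costing $C\hbar^{1+\gamma}Z(\hbar)$. On $\abs{t}>\hbar$ (respectively away from $0$ when $\gamma=1$) one integrates by parts a second time against $\mathcal R_1(t):=\int_{-\infty}^{t}\big(F(s)-(F\ast\chi_\hbar)(s)\big)\,ds$; since $\int u\,\chi_\hbar(u)\,du=0$, expanding $\mathcal R_1(t)=\int\chi_\hbar(u)\big(\int_{t-u}^{t}(F(s)-F(t))\,ds\big)\,du$ and applying the sub-lemma gives $\norm{\mathcal R_1(t)}_1\le C\hbar^{2}Z(\hbar)+C_N\hbar^{N}Q$, whence the boundary terms at $\abs{t}=\hbar$ (or the jump of $g'$ at $0$ when $\gamma=1$) and the bulk term $\int\abs{g''(t)}\,\norm{\mathcal R_1(t)}_1\,dt$ each come out as $O(\hbar^{1+\gamma}Z(\hbar))$, using $\abs{g''(t)}\le C\abs{t}^{\gamma-2}$ and $\int_\hbar^{C}t^{\gamma-2}\,dt\simeq\hbar^{\gamma-1}$ for $\gamma<1$ (and simply $g''\in L^1$ for $\gamma=1$).

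I expect the main obstacle to be the scalar sub-lemma with its sharp $(\abs{t-s}+\hbar)$ dependence: obtaining $\hbar Z(\hbar)$ rather than merely $Z(\hbar)$ forces one to exploit the lower bound $\chi_1\ge c$ near the origin together with the monotonicity of $n$, and to transport this estimate across an $O(1)$-length interval without losing powers of $\hbar$; the choice of $\mathcal D(\delta)$ in the hypothesis is exactly what makes this chaining legitimate. Everything else — the truncation against $\psi$ and the propagation of the rapidly decreasing remainders through the integrations by parts so that they always recombine into $C_N\hbar^{N}Q$ — is routine bookkeeping.
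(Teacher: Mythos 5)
The paper gives no proof of this proposition -- it is quoted directly from the reference \cite{MR1343781} (where it is Proposition~2.8) -- so there is nothing internal to compare against; your argument is a correct reconstruction of the standard Tauberian proof used there: the reduction to the scalar monotone function $n$, the chaining estimate $|n(t)-n(s)|\leq C(|t-s|+\hbar)Z(\hbar)+C_N\hbar^N\norm{B^*B}_1$ obtained from $\chi_1\geq c$ on $[-T_1,T_1]$, and one integration by parts for $\gamma=0$ versus a second one against $\mathcal{R}_1$ (using the vanishing first moment of the even function $\chi_1$) for $\gamma\in(0,1]$ are exactly the ingredients of the cited proof. The only point to state a little more carefully is the $\gamma=0$ case, where $g$ may jump at the origin, so the "integration by parts" must be read in the Stieltjes sense with $dg$ a finite measure including a possible point mass at $0$; this costs only $\sup_t\norm{F(t)-(F*\chi_\hbar)(t)}_1\cdot\norm{dg}_{TV}$ and is consistent with your bound.
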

The following two lemmas can be found in \cite{mikkelsen2023sharp}, where it is Lemma~{4.3} and Lemma~{4.5} respectively. We have included an additional estimate in the statement of the first lemma compared to Lemma~{4.3} in \cite{mikkelsen2023sharp} . This estimate is established as a part of the proof Lemma~{4.3} in \cite{mikkelsen2023sharp}.
\begin{lemma}\label{LE:Comparision_Loc_infty} 
Let $\mathcal{H}_{\hbar,\varepsilon}$ be an operator acting in $L^2(\R^d)$ which satisfies Assumption~\ref{Assumption:local_potential_1} with the open set $\Omega$ and let   $H_{\hbar,\varepsilon} = -\hbar^2 \Delta +V_\varepsilon$ be the associated rough Schr\"odinger operator of regularity $\tau\geq1$.
Assume that $\hbar\in(0,\hbar_0]$, with $\hbar_0$ sufficiently small.
Then for $f\in C_0^\infty(\R)$ and $\varphi\in C_0^\infty(\Omega)$ we have for any $N\in\N_0$ that
\begin{align}
	\norm{\varphi[f(\mathcal{H}_{\hbar,\varepsilon})-f(H_{\hbar,\varepsilon})]}_1 &\leq C_N \hbar^N, \label{EQLE:Comparision_Loc_infty_1}
\\
     \big \lVert \varphi [(z-\mathcal{H}_{\hbar,\varepsilon})^{-1}-(z-H_{\hbar,\varepsilon})^{-1}] \big\rVert_1 &\leq C_N \frac{\langle z \rangle^{N+\frac{d+1}{2}} \hbar^{2N-d}}{|\im(z)|^{2N+2}} , \qquad\text{$z\in \C\setminus\R$} \label{EQLE:Comparision_Loc_infty_2}
\shortintertext{and} 
	\norm{\varphi f(\mathcal{H}_{\hbar,\varepsilon})}_1&\leq C \hbar^{-d},\label{EQLE:Comparision_Loc_infty_3}
\end{align}	
The constant $C_N$ depends on $\supp(f)$, the numbers $N$, $\norm{f}_{L^\infty(\R)}$, $\norm{\partial^\alpha\varphi}_{L^\infty(\R^d)}$ for all $\alpha\in\N_0^d$.
\end{lemma}
\begin{lemma}\label{LE:assump_est_func_loc}
Let $H_{\hbar,\varepsilon} = -\hbar^2\Delta +V_\varepsilon $ be a rough Schr\"odinger operator acting in $L^2(\R^d)$ of regularity $\tau\geq1$ with $\hbar\in(0,\hbar_0]$, $\hbar_0$ sufficiently small. Assume that $V_\varepsilon \in C_0^\infty(\R^d)$ and there exists a $\delta\in(0,1]$ such that $\varepsilon\geq\hbar^{1-\delta}$. Suppose there is an open set $\Omega \subset \supp(V_\varepsilon)$ and a $c>0$ such that 
\begin{equation*}
	|V_\varepsilon(x)| +\hbar^{\frac{2}{3}} \geq c \qquad\text{for all $x\in \Omega$}.
\end{equation*}
Let $\chi_\hbar(t)$ be the function from Remark~\ref{RE:mollyfier_def}, $f\in C_0^\infty(\R)$ and $\varphi\in C_0^\infty(\Omega)$ then it holds for $s\in\R$ that
\begin{equation*}
	\norm{\varphi f(H_{\hbar,\varepsilon}) \chi_\hbar(H_{\hbar,\varepsilon}-s)f(H_{\hbar,\varepsilon})  \varphi }_1  \leq C \hbar^{-d}.
\end{equation*}
The constant $C$ depends only on the dimension, $\supp(f)$ and the numbers $\norm{f}_{L^\infty(\R)}$, $\norm{\partial^\alpha\varphi}_{L^\infty(\R)}$ for all $\alpha\in\N^d_0$ and $\varepsilon^{-\min(0,\tau-|\alpha|)}\norm{ \partial^\alpha V_\varepsilon}_{L^\infty(\Omega)}$ for all $\alpha\in\N_0^d$.
\end{lemma}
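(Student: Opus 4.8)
Put $\psi:=\varphi^{2}\in C_{0}^{\infty}(\Omega)$ and $F:=|f|^{2}\in C_{0}^{\infty}(\R)$. Since $\chi_\hbar\geq0$ and $\varphi$ is real, the first move is to reduce the claim to a trace estimate: writing $A=\chi_\hbar(H_{\hbar,\varepsilon}-s)^{1/2}f(H_{\hbar,\varepsilon})\varphi$ and using the identity $\norm{A^{*}A}_{1}=\norm{A}_{\HS}^{2}$ (for real $f$; the general case follows from $\norm{C^{*}D}_{1}\leq\norm{C}_{\HS}\norm{D}_{\HS}$), one gets
\[
\norm{\varphi f(H_{\hbar,\varepsilon})\chi_\hbar(H_{\hbar,\varepsilon}-s)f(H_{\hbar,\varepsilon})\varphi}_{1}\leq\Tr\big[\psi F(H_{\hbar,\varepsilon})\,\chi_\hbar(H_{\hbar,\varepsilon}-s)\big]=:R(s),
\]
with $\psi F(H_{\hbar,\varepsilon})$ trace class of order $\hbar^{-d}$ by the $H_{\hbar,\varepsilon}$–version of \eqref{EQLE:Comparision_Loc_infty_3} in Lemma~\ref{LE:Comparision_Loc_infty}. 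Since $F$ has compact support and $\chi_{1}$ is Schwartz, $\norm{F(H_{\hbar,\varepsilon})\chi_\hbar(H_{\hbar,\varepsilon}-s)}_{\mathrm{op}}\leq C_{N}\hbar^{N}$ once $\dist(s,\supp F)\geq1$, so $R(s)=\mathcal{O}(\hbar^{\infty})$ there, and it remains to bound $R(s)$ by $C\hbar^{-d}$ for $s$ in a fixed compact set.

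\textbf{Passage to the propagator and parametrix.} By the functional calculus and the definition of $\chi_\hbar$ in Remark~\ref{RE:mollyfier_def},
\[
R(s)=\frac{1}{2\pi\hbar}\int_{\R}\hat{\chi}(\tau)\,e^{-i\tau s/\hbar}\,\Tr\big[\psi F(H_{\hbar,\varepsilon})\,e^{i\tau H_{\hbar,\varepsilon}/\hbar}\big]\,d\tau .
\]
Here $\supp\hat{\chi}\subset(-T_{0},T_{0})$ and the non-critical hypothesis $|V_\varepsilon|+\hbar^{2/3}\geq c$ holds on $\Omega\supset\supp\varphi$, so by Remark~\ref{RE:propagator} (and the parametrix construction of \cite{mikkelsen2022optimal,mikkelsen2023sharp}, which is where the regularity $\tau\geq1$ and the condition $\varepsilon\geq\hbar^{1-\delta}$ are used) there is, for $|\tau|\leq T_{0}$, an operator $U_\hbar(\tau)$ with kernel
\[
U_\hbar(\tau;x,y)=\frac{1}{(2\pi\hbar)^{d}}\int_{\R^{d}}e^{i(S(\tau,x,\eta)-\langle y,\eta\rangle)/\hbar}\,b(\tau,x,\eta;\hbar)\,d\eta ,
\]
$S$ solving $\partial_\tau S=|\partial_{x}S|^{2}+V_\varepsilon(x)$ with $S|_{\tau=0}=\langle x,\eta\rangle$ and $b\sim\sum_{j\geq0}\hbar^{j}b_{j}$ solving the transport equations, such that $\psi F(H_{\hbar,\varepsilon})\big(e^{i\tau H_{\hbar,\varepsilon}/\hbar}-U_\hbar(\tau)\big)$ has trace norm $\mathcal{O}(\hbar^{\infty})$ uniformly on $|\tau|\leq T_{0}$. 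Composing $\psi F(H_{\hbar,\varepsilon})$ — an $\hbar$–pseudodifferential operator with principal symbol $\psi(x)F(q(x,\eta))$, $q(x,\eta):=|\eta|^{2}+V_\varepsilon(x)$ — with $U_\hbar(\tau)$ and taking the trace, $R(s)$ becomes, modulo $\mathcal{O}(\hbar^{\infty})$, the oscillatory integral
\[
\frac{1}{(2\pi\hbar)^{d+1}}\int_{\R}\int_{\R^{2d}}\hat{\chi}(\tau)\,e^{i\Phi(\tau,x,\eta)/\hbar}\,a(\tau,x,\eta;\hbar)\,dx\,d\eta\,d\tau ,\qquad \Phi=S(\tau,x,\eta)-\langle x,\eta\rangle-s\tau ,
\]
with $a|_{\tau=0}=\psi(x)F(q(x,\eta))+\mathcal{O}(\hbar)$.

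\textbf{Stationary phase and the radial estimate.} From $\partial_\tau S|_{\tau=0}=q$ and $\partial_\eta S|_{\tau=0}=x$ one identifies the critical set of $\Phi$ as $\mathcal{C}_{s}=\{\tau=0\}\cap\{q(x,\eta)=s\}$, a submanifold of codimension $2$, clean as a critical manifold away from the turning points $\{\eta=0,\ V_\varepsilon(x)=s\}$. Stationary phase along $\mathcal{C}_{s}$ gains one power $(2\pi\hbar)$, leaving a leading term of the form
\[
\frac{1}{(2\pi\hbar)^{d}}\int_{\R^{2d}}\psi(x)\,F(q(x,\eta))\,\chi_\hbar\big(q(x,\eta)-s\big)\,dx\,d\eta
\]
together with corrections of size $\hbar^{-d}\cdot\mathcal{O}(\hbar)$. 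To bound the leading term by $C\hbar^{-d}$ uniformly in $s$, I would fix $x$ and pass to the radial variable $u=|\eta|^{2}$: since $F$ has compact support, $u^{d/2-1}F(u+V_\varepsilon(x))$ is bounded uniformly in $x$, so $\int_{\R^{d}}F(q)\chi_\hbar(q-s)\,d\eta\leq C\int_{\R}|\chi_\hbar|=C\norm{\chi_{1}}_{L^{1}}$ uniformly in $x$ and $s$, and integrating the bounded $\psi$ over $x$ then gives $\leq C\hbar^{-d}$; the same change of variables handles the $\mathcal{O}(\hbar)$ corrections. Near the turning points $\mathcal{C}_{s}$ need not be clean, but there a crude estimate of the contribution of the small set $\{|\eta|\lesssim\hbar^{1/2}\}$ is still $\mathcal{O}(\hbar^{-d})$ because $d\geq3$. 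Combined with the first paragraph, this proves the lemma.

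\textbf{Main obstacle.} The heart of the argument is the second step: constructing a parametrix for $e^{i\tau H_{\hbar,\varepsilon}/\hbar}$ on a time window independent of $\hbar$ with a merely rough potential, which is precisely where the non-critical condition, $\tau\geq1$ and $\varepsilon\geq\hbar^{1-\delta}$ enter and which is supplied by Remark~\ref{RE:propagator} and \cite{mikkelsen2022optimal,mikkelsen2023sharp}. Everything after that is standard stationary-phase bookkeeping, the only delicate point being uniformity in $s$ — dealt with by the reduction to a compact range of $s$ and the radial change of variables. Note that the estimate proved here is exactly the input \eqref{EQ:PRO:Tauberian_1} required by the Tauberian Proposition~\ref{PRO:Tauberian}, with $Z(\hbar)=C\hbar^{-d}$, which is how it produces the sharp remainder in Lemma~\ref{LE:Aux_weyl_law}.
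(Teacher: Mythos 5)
The paper does not actually prove this lemma --- it is imported verbatim as Lemma~4.5 of \cite{mikkelsen2023sharp} --- and your sketch follows essentially the same route as that reference: reduce by functional calculus to $\Tr\big[\varphi^{2}|f|^{2}(H_{\hbar,\varepsilon})\,\chi_\hbar(H_{\hbar,\varepsilon}-s)\big]$ (your first step is in fact an exact identity for real $f$, since $f(H)\chi_\hbar(H-s)f(H)=(f^{2}\chi_\hbar(\cdot-s))(H)$ by the functional calculus, so no Cauchy--Schwarz is needed), Fourier-invert $\chi_\hbar$, insert the rough parametrix on the $\hbar$-independent window $[-T_{0},T_{0}]$ supplied by the non-critical condition, and close with stationary phase on the codimension-two critical set $\{\tau=0,\ q=s\}$ together with the radial change of variables for the leading term. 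The power counting is right, including the decay of $\chi_{1}$ for $s$ far from $\supp f$ and the crude bound on the turning-point region $\{|\eta|\lesssim\hbar^{1/2}\}$ (which in fact only needs $d\geq2$). The one reservation is that everything genuinely hard --- the construction of the parametrix with $\varepsilon$-dependent symbol bounds under $\varepsilon\geq\hbar^{1-\delta}$ and $\tau\geq1$, the trace-norm (not merely operator-norm) control of $\psi F(H_{\hbar,\varepsilon})\bigl(e^{i\tau H_{\hbar,\varepsilon}/\hbar}-U_\hbar(\tau)\bigr)$, and the uniformity in $s$ of the stationary phase through the degenerate turning-point regime --- is asserted by appeal to \cite{mikkelsen2022optimal,mikkelsen2023sharp} rather than carried out; since the paper itself only cites the result, this makes your proposal a faithful outline of the actual proof rather than a self-contained one.
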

The following two lemmas are similar to Lemma~{4.8} and Lemma~{4.9} from \cite{mikkelsen2023sharp}. The proofs are analogous to the proofs and hence we will omit them here.  
\begin{lemma}\label{LE:Func_moll_com_model_reg}
Let $\mathcal{H}_{\hbar,\varepsilon}$ be an operator acting in $L^2(\R^d)$ satisfying Assumption~\ref{Assumption:local_potential_1} with the open set $\Omega$ and let   $H_{\hbar,\varepsilon} = -\hbar^2 \Delta +V_\varepsilon$ be the associated rough Schr\"odinger operator of regularity $\tau\geq1$. Assume that $\hbar\in(0,\hbar_0]$, with $\hbar_0$ sufficiently small and there exists a $\delta\in(0,1]$ such that $\varepsilon\geq\hbar^{1-\delta}$
Moreover, let $\chi_\hbar(t)$ be the function from Remark~\ref{RE:mollyfier_def}, $f\in C_0^\infty(\R)$ and $\varphi\in C_0^\infty(B(\Omega))$ then it holds for $s\in\R$ and $N\in\N$ that
\begin{equation} \label{EQLE:Func_moll_com_model_reg}
	\norm{\varphi f(\mathcal{H}_{\hbar,\varepsilon}) \chi_\hbar(\mathcal{H}_{\hbar,\varepsilon}-s)f(\mathcal{H}_{\hbar,\varepsilon})  \varphi -\varphi f(H_{\hbar,\varepsilon}) \chi_\hbar(H_{\hbar,\varepsilon}-s)f(H_{\hbar,\varepsilon})  \varphi }_1 \leq C_N \hbar^N.
\end{equation}
Moreover, suppose there exists some $c>0$ such that 
\begin{equation*}
	|V_\varepsilon(x)| +\hbar^{\frac{2}{3}} \geq c \qquad\text{for all $x\in \Omega$}.
\end{equation*}
Then it holds that
\begin{equation}\label{LEEQ:Func_moll_com_model_reg}
	\norm{\varphi f(\mathcal{H}_{\hbar,\varepsilon}) \chi_\hbar(\mathcal{H}_{\hbar,\varepsilon}-s)f(\mathcal{H}_{\hbar,\varepsilon})  \varphi }_1  \leq C \hbar^{-d}.
\end{equation}
The constants $C_N$ and $C$ depend only on the dimension and the numbers $\norm{f}_{L^\infty(\R)}$, $\norm{\partial^\alpha\varphi}_{L^\infty(\R)}$ and $\varepsilon^{-\min(0,\tau-|\alpha|)}\norm{ \partial^\alpha V_\varepsilon}_{L^\infty(\Omega)}$ for all $\alpha\in\N_0^d$.
\end{lemma}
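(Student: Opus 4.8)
The plan is to prove \eqref{EQLE:Func_moll_com_model_reg} first and then deduce \eqref{LEEQ:Func_moll_com_model_reg} by combining it with the already-available bound of Lemma~\ref{LE:assump_est_func_loc}. For the first estimate, the idea is to express $\chi_\hbar(A-s)$ for $A\in\{\mathcal{H}_{\hbar,\varepsilon},H_{\hbar,\varepsilon}\}$ through the almost-analytic/Helffer--Sj\"ostrand machinery, or more directly via the Fourier representation $\chi_\hbar(A-s)=\frac{1}{2\pi}\int_\R \hat\chi(\hbar r)e^{-isr}e^{irA}\,dr$, and then insert a telescoping decomposition so that the whole operator difference in \eqref{EQLE:Func_moll_com_model_reg} is written as a sum of terms each of which contains at least one factor of the form $\varphi\,[u(\mathcal{H}_{\hbar,\varepsilon})-u(H_{\hbar,\varepsilon})]$ or $\varphi\,[(z-\mathcal{H}_{\hbar,\varepsilon})^{-1}-(z-H_{\hbar,\varepsilon})^{-1}]$ sandwiched between factors that are bounded in operator norm (the $f(\cdot)$ and $\chi_\hbar(\cdot-s)$ pieces are uniformly bounded since $f$ and $\chi_1$ are, using $\norm{\chi_\hbar}_{L^1}=\norm{\chi_1}_{L^1}$). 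The key point is that a commutator-type argument lets us move the cutoff $\varphi$ next to the resolvent difference at the cost of $\mathcal{O}(\hbar^N)$ errors: since $\mathcal{H}_{\hbar,\varepsilon}$ and $H_{\hbar,\varepsilon}$ agree on $C_0^\infty(\Omega)$ and $\varphi$ is supported in $\Omega$ (or $B(\Omega)$), the difference $\varphi(\mathcal{H}_{\hbar,\varepsilon}-H_{\hbar,\varepsilon})$ is, morally, zero, and the propagation-of-smallness is exactly what \eqref{EQLE:Comparision_Loc_infty_1} and \eqref{EQLE:Comparision_Loc_infty_2} in Lemma~\ref{LE:Comparision_Loc_infty} quantify.

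Concretely, I would proceed as follows. Step one: write
$\chi_\hbar(A-s)=-\frac{1}{\pi}\int_\C \bar\partial\widetilde{\chi_\hbar^{(s)}}(z)(z-A)^{-1}\,L(dz)$ using Theorem~\ref{THM:Helffer-Sjostrand} applied to the function $t\mapsto\chi_\hbar(t-s)$ (note $\chi_\hbar(\cdot-s)\in C_0^\infty(\R)$ since $\chi_1$ is Schwartz and, after the mollification in Remark~\ref{RE:mollyfier_def}, has the needed decay; if strict compact support is an issue one truncates with $\mathcal{O}(\hbar^N)$ error). Step two: similarly represent $f(A)=-\frac1\pi\int_\C\bar\partial\tilde f(z')(z'-A)^{-1}L(dz')$. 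Step three: substitute into the left side of \eqref{EQLE:Func_moll_com_model_reg}, obtaining a triple integral over $z_1,z_2,z_3$ whose integrand is $\varphi(z_1-A)^{-1}(z_2-A)^{-1}(z_3-A)^{-1}\varphi$ for $A=\mathcal{H}_{\hbar,\varepsilon}$ minus the same for $A=H_{\hbar,\varepsilon}$. Step four: telescope the difference of the three-fold products into three terms, in each of which a single resolvent difference appears; commute $\varphi$ (using a second cutoff $\tilde\varphi\in C_0^\infty(\Omega)$ equal to $1$ near $\supp\varphi$) so that one factor is $\tilde\varphi[(z_j-\mathcal{H}_{\hbar,\varepsilon})^{-1}-(z_j-H_{\hbar,\varepsilon})^{-1}]$ and apply the trace-norm bound \eqref{EQLE:Comparision_Loc_infty_2}, while the remaining resolvents are bounded in operator norm by $|\im z_i|^{-1}$. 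Step five: integrate in $z_1,z_2,z_3$; the almost-analyticity bounds $|\bar\partial\tilde f(z)|\le C_N|\im z|^N$ and the analogous bounds for $\widetilde{\chi_\hbar^{(s)}}$ (which carry powers of $\hbar^{-1}$ from the rescaling, but finitely many, beaten by any $\hbar^N$) absorb the $|\im z|^{-2N-2}$ singularities from \eqref{EQLE:Comparision_Loc_infty_2}, leaving a net bound $C_N\hbar^N$ uniform in $s\in\R$ (uniformity in $s$ is automatic since $s$ only enters through the phase $e^{-isr}$, equivalently through a translation of the real part of the support of $\widetilde{\chi_\hbar^{(s)}}$, which does not affect any of the estimates).

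For the second estimate \eqref{LEEQ:Func_moll_com_model_reg}, I would simply write the operator inside the trace norm as $\big[\varphi f(\mathcal{H}_{\hbar,\varepsilon})\chi_\hbar(\mathcal{H}_{\hbar,\varepsilon}-s)f(\mathcal{H}_{\hbar,\varepsilon})\varphi-\varphi f(H_{\hbar,\varepsilon})\chi_\hbar(H_{\hbar,\varepsilon}-s)f(H_{\hbar,\varepsilon})\varphi\big]+\varphi f(H_{\hbar,\varepsilon})\chi_\hbar(H_{\hbar,\varepsilon}-s)f(H_{\hbar,\varepsilon})\varphi$, bound the bracket by $C_N\hbar^N\le C\hbar^{-d}$ via \eqref{EQLE:Func_moll_com_model_reg}, and bound the last term by $C\hbar^{-d}$ using Lemma~\ref{LE:assump_est_func_loc} (whose hypotheses are exactly the extra non-criticality condition $|V_\varepsilon|+\hbar^{2/3}\ge c$ imposed in this part of the statement). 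The triangle inequality then gives \eqref{LEEQ:Func_moll_com_model_reg}. I expect the main obstacle to be Step four–five: keeping careful track of how the $\hbar^{-1}$-rescaling in $\chi_\hbar$ propagates through the almost-analytic extension and the triple $z$-integral, and verifying that the commutator errors from moving $\varphi$ past resolvents really are $\mathcal{O}(\hbar^N)$ rather than merely $\mathcal{O}(\hbar^{N-\text{(fixed)}})$ — this is where one must use that $\varphi$ and $\tilde\varphi$ are supported strictly inside $\Omega$ so that $\tilde\varphi(\mathcal{H}_{\hbar,\varepsilon}-H_{\hbar,\varepsilon})\tilde\varphi=0$ exactly, turning the estimate into a genuinely super-polynomially small one rather than a power saving. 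Once that is in hand, the rest is the routine Helffer--Sj\"ostrand bookkeeping already used repeatedly in \cite{mikkelsen2023sharp}.
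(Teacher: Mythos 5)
Your reduction of \eqref{LEEQ:Func_moll_com_model_reg} to \eqref{EQLE:Func_moll_com_model_reg} plus Lemma~\ref{LE:assump_est_func_loc} is exactly what the paper does, and is fine. The gap is in your proof of \eqref{EQLE:Func_moll_com_model_reg}, in two places. First, your Step one treats $\chi_\hbar(\cdot-s)$ as a function to which the Helffer--Sj\"ostrand calculus is applied (after truncation). But $\chi_\hbar(t)=\hbar^{-1}\chi_1(t/\hbar)$ has $\sup_t|\partial_t^m\chi_\hbar(t)|\sim \hbar^{-1-m}$, so any almost analytic extension satisfies only $|\bar\partial\widetilde{\chi_\hbar}(z)|\leq C_n\hbar^{-n-2}|\im z|^{n}$. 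When you pair this with \eqref{EQLE:Comparision_Loc_infty_2}, whose gain $\hbar^{2N}$ is inseparable from the singularity $|\im z|^{-2N-2}$, you must take $n\geq 2N+2$ to make the $z$-integral converge, and the powers of $\hbar^{-1}$ you pay then cancel the $\hbar^{2N}$ you gain (up to a fixed, $N$-independent power). So the losses are \emph{not} ``finitely many, beaten by any $\hbar^N$''; they grow in lockstep with the gain, and the estimate you describe produces $\mathcal{O}(\hbar^{-d-c})$ rather than $\mathcal{O}(\hbar^N)$. The paper's proof avoids this precisely by never taking an almost analytic extension of $\chi_\hbar$: since $\hat\chi$ has compact support, $z\mapsto\chi_\hbar(z-s)$ is entire, so one applies Theorem~\ref{THM:Helffer-Sjostrand} to $f$ alone and uses $\tilde f(z)\chi_\hbar(z-s)$ as the extension of the product, so that $\bar\partial$ falls only on the $\hbar$-independent $\tilde f$.

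Second, your telescoping of the triple resolvent product leaves middle terms such as $\varphi R_1'[R_2-R_2']R_3\varphi$ in which the resolvent difference is not adjacent to any cutoff, and \eqref{EQLE:Comparision_Loc_infty_2} only controls $\varphi[(z-\mathcal{H}_{\hbar,\varepsilon})^{-1}-(z-H_{\hbar,\varepsilon})^{-1}]$ with the cutoff directly attached. Moving a cutoff through $(z-\mathcal{H}_{\hbar,\varepsilon})^{-1}$ is not covered by anything quoted: nothing is known about $\mathcal{H}_{\hbar,\varepsilon}$ outside $\Omega$, the identity $\tilde\varphi(\mathcal{H}_{\hbar,\varepsilon}-H_{\hbar,\varepsilon})\tilde\varphi=0$ does not by itself control $[\mathcal{H}_{\hbar,\varepsilon},\varphi]$ on the range of a resolvent, and even through the concrete $(z-H_{\hbar,\varepsilon})^{-1}$ an iterated-commutator expansion leaves a remainder that is small only in operator norm, while the remaining factors (e.g.\ $(z-\mathcal{H}_{\hbar,\varepsilon})^{-1}\varphi$) are not known to be trace class, so you cannot convert that smallness into the trace-norm bound \eqref{EQLE:Func_moll_com_model_reg}. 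The paper sidesteps both issues with a different splitting: add and subtract $\varphi f(H_{\hbar,\varepsilon})\chi_\hbar(H_{\hbar,\varepsilon}-s)f(\mathcal{H}_{\hbar,\varepsilon})\varphi$, bound one piece by $\norm{[f(\mathcal{H}_{\hbar,\varepsilon})-f(H_{\hbar,\varepsilon})]\varphi}_1\leq C_N\hbar^N$ times $\sup\chi_\hbar\leq C\hbar^{-1}$ (Lemma~\ref{LE:Comparision_Loc_infty}, \eqref{EQLE:Comparision_Loc_infty_1}), and the other by $\norm{f(\mathcal{H}_{\hbar,\varepsilon})\varphi}_1\leq C\hbar^{-d}$ (\eqref{EQLE:Comparision_Loc_infty_3}) times the \emph{operator norm} of $\varphi[f(\mathcal{H}_{\hbar,\varepsilon})\chi_\hbar(\mathcal{H}_{\hbar,\varepsilon}-s)-f(H_{\hbar,\varepsilon})\chi_\hbar(H_{\hbar,\varepsilon}-s)]$, which is then $\mathcal{O}(\hbar^N)$ by the single Helffer--Sj\"ostrand integral described above together with \eqref{EQLE:Comparision_Loc_infty_2}. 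You would need to incorporate both of these devices (or supply genuinely new propagation and trace-class estimates) for your outline to close.
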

\begin{remark}\label{remark_small_T}
In the proof of Lemma~\ref{LE:Func_moll_com_model_reg} one will need that the function $\chi_1$ is supported in a sufficiently small interval around zero. The reason for this is that in the proof we need an uniform estimate in time of the following type 
\begin{equation*}
	\norm{\OpW(1-\theta_2) e^{it\hbar^{-1} H_{\hbar,\varepsilon} } f(H_{\hbar,\varepsilon}) \OpW(\theta_1)}_{\mathrm{op}} \leq C_N \hbar^{N} \qquad\text{ for all $N\in\N$}.
\end{equation*}
Here $\theta_1,\theta_2\in C^\infty_0(\R^{2d})$ such that
\begin{equation*}
	\dist\big\{\supp(\theta_1),\supp(1-\theta_2)\big\} \geq c>0.
\end{equation*}	
That such an estimate is indeed true uniformly for $|t|\leq T_0'$ follows from Lemma~{4.7} from \cite{mikkelsen2023sharp}, where $T_0'$ is some positive fixed time. For the proof of  Lemma~\ref{LE:Func_moll_com_model_reg} we have ensured that the support of $\chi_1$ is sufficiently small by assumptions made on the function in Remark~\ref{RE:mollyfier_def}. There is also a version of this result in \cite{MR1272980}, where it is Lemma~{5.1}. This result is stated and proves for semiclassical pseudo-differential operators.     
\end{remark}
\begin{lemma}\label{LE:trace_com_model_reg}
Let $\gamma\in[0,1]$ and $\mathcal{H}_{\hbar,\varepsilon}$ be an operator acting in $L^2(\R^d)$. Suppose  $\mathcal{H}_{\hbar,\varepsilon}$ satisfies Assumption~\ref{Assumption:local_potential_1} with the open set $\Omega$ and let   $H_{\hbar,\varepsilon} = -\hbar^2 \Delta +V_\varepsilon$ be the associated rough Schr\"odinger operator of regularity $\tau\geq1$ if $\gamma=0$ and $\tau\geq2$ if $\gamma>0$. Assume that $\hbar\in(0,\hbar_0]$, with $\hbar_0$ sufficiently small and there exists a $\delta\in(0,1]$ such that $\varepsilon\geq\hbar^{1-\delta}$. 
Moreover, suppose there exists some $c>0$ such that 
\begin{equation*}
	|V_\varepsilon(x)| +\hbar^{\frac{2}{3}} \geq c \qquad\text{for all $x\in \Omega$}
\end{equation*}
Then for $\varphi\in C_0^\infty(\Omega)$ it holds that
\begin{equation}
	\Big|\Tr\big[\varphi g_\gamma(\mathcal{H}_{\hbar,\varepsilon})\big] -\Tr\big[\varphi g_\gamma(H_{\hbar,\varepsilon})\big] \Big|  \leq C \hbar^{1+\gamma-d} + C_N'  \hbar^{N}.
\end{equation}
The constants $C$ and $C_N'$ depend on the dimension and the numbers  $\gamma$, $\norm{\partial^\alpha\varphi}_{L^\infty(\R)}$ and $\varepsilon^{-\min(0,\tau-|\alpha|)}\norm{ \partial^\alpha V_\varepsilon}_{L^\infty(\Omega)}$ for all $\alpha\in\N_0^d$.
\end{lemma}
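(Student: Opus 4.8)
The strategy is to reduce the comparison of $\Tr[\varphi g_\gamma(\mathcal{H}_{\hbar,\varepsilon})]$ and $\Tr[\varphi g_\gamma(H_{\hbar,\varepsilon})]$ to the mollified versions via the Tauberian estimate of Proposition~\ref{PRO:Tauberian}, and then to control the mollified versions using Lemma~\ref{LE:Func_moll_com_model_reg}. First I would fix a function $f\in C_0^\infty(\R)$ with $f\equiv 1$ on a neighbourhood of $\supp(g_\gamma)\cap[\min(V_\varepsilon)-1,\infty)$ (using that both operators are lower semibounded by Assumption~\ref{Assumption:local_potential_1}\ref{G.L.1.1}, with a bound independent of the relevant parameters) so that $g_\gamma(\mathcal{H}_{\hbar,\varepsilon})=f(\mathcal{H}_{\hbar,\varepsilon})g_\gamma(\mathcal{H}_{\hbar,\varepsilon})f(\mathcal{H}_{\hbar,\varepsilon})$ and similarly for $H_{\hbar,\varepsilon}$; here one should check that $g_\gamma$ can be taken in the class $C_0^{\infty,\gamma}(\R)$ after a harmless cutoff at high energies (the high-energy part contributes $\mathcal{O}(\hbar^N)$ by functional calculus, as in Lemma~\ref{LE:Comparision_Loc_infty}). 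This lets me write $\varphi = \varphi_1\varphi$ with $\varphi_1\in C_0^\infty(\Omega)$ equal to $1$ near $\supp\varphi$ and insert cutoffs so that all operators appearing are of the form $\varphi f(A)(\cdots)f(A)\varphi_1$.

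Next I would apply Proposition~\ref{PRO:Tauberian} separately to $A=\mathcal{H}_{\hbar,\varepsilon}$ and $A=H_{\hbar,\varepsilon}$ with $B = f(A)\varphi$ (or $B=\varphi$, absorbing $f$), to get
\begin{equation*}
	\norm{\varphi\big(g_\gamma(A)-g_\gamma^{(\hbar)}(A)\big)\varphi}_1 \leq C\hbar^{1+\gamma}Z(\hbar) + C_N'\hbar^N,
\end{equation*}
where the hypothesis \eqref{EQ:PRO:Tauberian_1} is verified with $Z(\hbar)=C\hbar^{-d}$: for $A=H_{\hbar,\varepsilon}$ this is precisely Lemma~\ref{LE:assump_est_func_loc}, and for $A=\mathcal{H}_{\hbar,\varepsilon}$ it is \eqref{LEEQ:Func_moll_com_model_reg} of Lemma~\ref{LE:Func_moll_com_model_reg} (both under the non-critical-type hypothesis $|V_\varepsilon|+\hbar^{2/3}\geq c$ on $\Omega$ that we are assuming). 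Taking traces, $|\Tr[\varphi g_\gamma(A)]-\Tr[\varphi g_\gamma^{(\hbar)}(A)]| \leq C\hbar^{1+\gamma-d} + C_N'\hbar^N$ for both choices of $A$. Then the triangle inequality reduces the claim to estimating $|\Tr[\varphi g_\gamma^{(\hbar)}(\mathcal{H}_{\hbar,\varepsilon})] - \Tr[\varphi g_\gamma^{(\hbar)}(H_{\hbar,\varepsilon})]|$.

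For this last term I would write $g_\gamma^{(\hbar)} = g_\gamma * \chi_\hbar$ and represent the difference as $\Tr$ of $\int_\R \partial_s\big[\text{something}\big]$; more concretely, using that $\chi_\hbar(A-s)$ appears and $g_\gamma^{(\hbar)}(A) = \int_\R g_\gamma(s)\,\chi_\hbar(A-s)\,ds$, the difference of traces becomes $\int_\R g_\gamma(s)\,\Tr\big[\varphi(\chi_\hbar(\mathcal{H}_{\hbar,\varepsilon}-s) - \chi_\hbar(H_{\hbar,\varepsilon}-s))\varphi\big]\,ds$ after inserting the cutoffs $f$ (which change things only by $\mathcal{O}(\hbar^N)$ via Lemma~\ref{LE:Comparision_Loc_infty}). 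Each integrand is controlled by \eqref{EQLE:Func_moll_com_model_reg} of Lemma~\ref{LE:Func_moll_com_model_reg}, giving $C_N\hbar^N$; since $g_\gamma$ is integrable (it is compactly supported after the high-energy cutoff) the $s$-integral converges and contributes $C_N\hbar^N$. Collecting the three pieces gives the stated bound $C\hbar^{1+\gamma-d} + C_N'\hbar^N$.

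The main obstacle I anticipate is bookkeeping the cutoff functions $f$ and $\varphi_1$ so that at every stage the operators are of exactly the form required by Lemmas~\ref{LE:Comparision_Loc_infty}, \ref{LE:assump_est_func_loc} and \ref{LE:Func_moll_com_model_reg} — in particular ensuring the localisation $\varphi\in C_0^\infty(\Omega)$ is preserved and that replacing $g_\gamma$ by a compactly supported representative in $C_0^{\infty,\gamma}(\R)$ is justified uniformly in $\hbar$ and $\varepsilon$ (using the uniform lower bound on the spectra). The analytic content is entirely in the cited lemmas; the work here is the clean reduction via the Tauberian proposition and the observation that the mollified traces differ only by $\mathcal{O}(\hbar^N)$.
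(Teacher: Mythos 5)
Your proposal is correct and follows essentially the same route as the paper: cut $g_\gamma$ to a compactly supported representative via lower semiboundedness, insert the cutoffs $f$ and $\varphi_1$, apply Proposition~\ref{PRO:Tauberian} to each operator with the hypothesis $Z(\hbar)=C\hbar^{-d}$ verified by Lemma~\ref{LE:assump_est_func_loc} and \eqref{LEEQ:Func_moll_com_model_reg}, and then control the difference of the mollified traces by writing $g_\gamma^{(\hbar)}(A)=\int g_\gamma(s)\chi_\hbar(A-s)\,ds$ and invoking \eqref{EQLE:Func_moll_com_model_reg}. This is exactly the paper's decomposition and use of the cited lemmas (only a minor slip: the cutoff of $g_\gamma$ is needed at very negative energies, where lower semiboundedness makes it exact, not at high energies).
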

\begin{proof}
Since both operators are lower semi-bounded we may assume that $g$ is compactly supported. Let $f\in C_0^\infty(\R)$ such that $f(t)g_\gamma(t)= g_\gamma(t)$ for all $t\in\R$ and let $\varphi_1\in C_0^\infty(\Omega)$ such that $\varphi(x)\varphi_1(x) = \varphi(x)$ for all $x\in\R^d$. Moreover, let $\chi_\hbar(t)$ be the function from Remark~\ref{RE:mollyfier_def} and  set $g_\gamma^{(\hbar)}(t) = g_\gamma*\chi_\hbar(t)$. With this notation set up we have that
\begin{equation}\label{EQ:trace_com_model_reg_1}
	\begin{aligned}
	\MoveEqLeft \Big|\Tr\big[\varphi g_\gamma(\mathcal{H}_{\hbar,\varepsilon})\big] -\Tr\big[\varphi g_\gamma(H_{\hbar,\varepsilon})\big] \Big|
	\\
	\leq {}& \lVert \varphi \varphi_1f(\mathcal{H}_{\hbar,\varepsilon}) (g_\gamma(\mathcal{H}_{\hbar,\varepsilon}) - g_\gamma^{(\hbar)}(\mathcal{H}_{\hbar,\varepsilon}))f(\mathcal{H}_{\hbar,\varepsilon})\varphi_1] \rVert_1
	\\
	&+\lVert \varphi \varphi_1 f(H_{\hbar,\varepsilon}) (g_\gamma(H_{\hbar,\varepsilon})-g_\gamma^{(\hbar)}(H_{\hbar,\varepsilon}))f(H_{\hbar,\varepsilon})\varphi_1] \rVert_1
	+ \norm{\varphi}_{L^\infty(\R^d)} \int_{\R} g_\gamma(s)  \,ds
	\\
	&\times  \sup_{s\in\R} \lVert\varphi \varphi_1 f(\mathcal{H}_{\hbar,\varepsilon}) \chi_\hbar(\mathcal{H}_{\hbar,\varepsilon}-s)f(\mathcal{H}_{\hbar,\varepsilon})  \varphi_1 -\varphi_1 f(H_{\hbar,\varepsilon}) \chi_\hbar(H_{\hbar,\varepsilon}-s)f(H_{\hbar,\varepsilon})  \varphi_1\rVert_1.
	\end{aligned}
\end{equation}
Lemma~\ref{LE:assump_est_func_loc} and Lemma~\ref{LE:Func_moll_com_model_reg} gives us that the assumptions of Proposition~\ref{PRO:Tauberian} is fulfilled with $B$ equal to $\varphi_1 f(H_\hbar)$ and $\varphi_1 f(H_{\hbar,\varepsilon})$  respectively. Hence we hvae that 
\begin{equation}\label{EQ:trace_com_model_reg_2}
	\begin{aligned}
	 \lVert \varphi \varphi_1f(\mathcal{H}_{\hbar,\varepsilon}) (g_\gamma(\mathcal{H}_{\hbar,\varepsilon}) - g_\gamma^{(\hbar)}(\mathcal{H}_{\hbar,\varepsilon}))f(\mathcal{H}_{\hbar,\varepsilon})\varphi_1] \rVert_1
	\leq C \hbar^{1+\gamma-d}
	\end{aligned}
\end{equation}
and
\begin{equation}\label{EQ:trace_com_model_reg_3}
	\begin{aligned}
	\lVert \varphi \varphi_1 f(H_{\hbar,\varepsilon}) (g_\gamma(H_{\hbar,\varepsilon})-g_\gamma^{(\hbar)}(H_{\hbar,\varepsilon}))f(H_{\hbar,\varepsilon})\varphi_1] \rVert_1 \leq C \hbar^{1+\gamma-d}.
	\end{aligned}
\end{equation}
From applying Lemma~\ref{LE:Func_moll_com_model_reg} we get for all $N\in\N$ that
\begin{equation}\label{EQ:trace_com_model_reg_4}
	\begin{aligned}
	\MoveEqLeft  \sup_{s\in\R} \lVert\varphi \varphi_1 f(\mathcal{H}_{\hbar,\mu}) \chi_\hbar(\mathcal{H}_{\hbar,\mu}-s)f(\mathcal{H}_{\hbar,\mu})  \varphi_1 -\varphi_1 f(H_{\hbar,\mu\varepsilon}) \chi_\hbar(H_{\hbar,\mu\varepsilon}-s)f(H_{\hbar,\mu,\varepsilon})  \varphi_1\rVert_1
	 \\
	 &\leq C_N\hbar^N.
	\end{aligned}
\end{equation}
Finally from combining the estimates in \eqref{EQ:trace_com_model_reg_1}, \eqref{EQ:trace_com_model_reg_2}, \eqref{EQ:trace_com_model_reg_3} and \eqref{EQ:trace_com_model_reg_4}we obtain the desired estimate and this concludes the proof.
\end{proof}
For operators that satisfies Assumption~\ref{Assumption:local_potential_1} we can establish the following model theorem. The proof of the theorem is similar to the proof of Theorem~{5.2} in \cite{mikkelsen2023sharp}.
\begin{thm}\label{THM:Loc_mod_prob}
Let $\gamma\in[0,1]$ and $\mathcal{H}_{\hbar,\varepsilon}$ be an operator acting in $L^2(\R^d)$. Suppose  $\mathcal{H}_{\hbar,\varepsilon}$ satisfies Assumption~\ref{Assumption:local_potential_1} with the open set $\Omega$ and let   $H_{\hbar,\varepsilon} = -\hbar^2 \Delta +V_\varepsilon$ be the associated rough Schr\"odinger operator of regularity $\tau\geq1$ if $\gamma=0$ and $\tau\geq2$ if $\gamma>0$. Assume that $\hbar\in(0,\hbar_0]$, with $\hbar_0$ sufficiently small and there exists a $\delta\in(0,1]$ such that $\varepsilon\geq\hbar^{1-\delta}$. Moreover, suppose there exists some $c>0$ such that 
\begin{equation}\label{THM:model_prob_global_Non_crit}
	|V_\varepsilon(x)| +\hbar^{\frac{2}{3}} \geq c \qquad\text{for all $x\in\Omega$}.
\end{equation}
Then for any $\varphi\in C_0^\infty(\Omega)$ it holds that
\begin{equation*}
	  \Big|\Tr\big[\varphi g_\gamma(\mathcal{H}_{\hbar,\varepsilon})\big] - \frac{1}{(2\pi\hbar)^d} \int_{\R^{2d}}g_\gamma(p^2+V_\varepsilon(x)) \varphi(x) \,dx dp \Big| \leq C \hbar^{1+\gamma-d},
\end{equation*}
where the constant $C$ depends only on the dimension and the numbers $\norm{\partial^\alpha \varphi}_{L^\infty(\R^d)}$ and $\varepsilon^{-\min(0,\tau-|\alpha|)}\norm{ \partial^\alpha V_\varepsilon}_{L^\infty(\Omega)}$ for all $\alpha\in N_0^d$. 
\end{thm}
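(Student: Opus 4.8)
The plan is to combine the auxiliary Weyl law for the model operator $H_{\hbar,\varepsilon}$ (Lemma~\ref{LE:Aux_weyl_law}) with the comparison of the localized traces of $\mathcal{H}_{\hbar,\varepsilon}$ and $H_{\hbar,\varepsilon}$ (Lemma~\ref{LE:trace_com_model_reg}). Since $\mathcal{H}_{\hbar,\varepsilon}$ satisfies Assumption~\ref{Assumption:local_potential_1} with open set $\Omega$ and $H_{\hbar,\varepsilon}$ is the associated rough Schr\"odinger operator, and since all the hypotheses of the present theorem (regularity $\tau$, the scale condition $\varepsilon\geq\hbar^{1-\delta}$, and the non-criticality bound~\eqref{THM:model_prob_global_Non_crit}) are exactly the hypotheses of both lemmas, the two results apply directly. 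The triangle inequality then gives, for $\varphi\in C_0^\infty(\Omega)$,
\begin{equation*}
\begin{aligned}
\MoveEqLeft \Big|\Tr\big[\varphi g_\gamma(\mathcal{H}_{\hbar,\varepsilon})\big] - \frac{1}{(2\pi\hbar)^d} \int_{\R^{2d}}g_\gamma(p^2+V_\varepsilon(x)) \varphi(x) \,dx dp \Big|
\\
\leq{}& \Big|\Tr\big[\varphi g_\gamma(\mathcal{H}_{\hbar,\varepsilon})\big] -\Tr\big[\varphi g_\gamma(H_{\hbar,\varepsilon})\big] \Big|
+ \Big|\Tr\big[\varphi g_\gamma(H_{\hbar,\varepsilon})\big] - \frac{1}{(2\pi\hbar)^d} \int_{\R^{2d}}g_\gamma(p^2+V_\varepsilon(x)) \varphi(x) \,dx dp \Big|.
\end{aligned}
\end{equation*}
The second term on the right is bounded by $C\hbar^{1+\gamma-d}$ by Lemma~\ref{LE:Aux_weyl_law}, and the first term is bounded by $C\hbar^{1+\gamma-d} + C_N'\hbar^N$ by Lemma~\ref{LE:trace_com_model_reg}. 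Choosing $N$ larger than $d$ (so that $\hbar^N\leq\hbar^{1+\gamma-d}$ for $\hbar$ small) absorbs the remainder term, and we conclude with a constant $C$ of the stated dependence.

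The only point requiring a little care is the bookkeeping of the cutoff functions: Lemma~\ref{LE:trace_com_model_reg} and Lemma~\ref{LE:Aux_weyl_law} are both stated for $\varphi\in C_0^\infty(\Omega)$, so no auxiliary cutoffs $\varphi_1$ need to be introduced here — those already appear inside the proof of Lemma~\ref{LE:trace_com_model_reg}. One should also verify that the constants coming from the two lemmas depend only on the dimension and the admissible quantities $\norm{\partial^\alpha\varphi}_{L^\infty(\R^d)}$ and $\varepsilon^{-\min(0,\tau-|\alpha|)}\norm{\partial^\alpha V_\varepsilon}_{L^\infty(\Omega)}$; this is immediate from the statements, since both lemmas record exactly this dependence.

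There is essentially no obstacle in this proof: it is a direct assembly of the two main auxiliary results of the section, which were themselves the substantive work. The one genuine (but already-completed) difficulty was proving Lemma~\ref{LE:Aux_weyl_law} — i.e. the sharp Weyl law for the rough Schr\"odinger operator $H_{\hbar,\varepsilon}$ with the degenerate non-criticality condition $|V_\varepsilon| + \hbar^{2/3}\geq c$ — which is imported from \cite{mikkelsen2023sharp}, and Lemma~\ref{LE:trace_com_model_reg}, whose proof relies on the Helffer-Sj\"ostrand formula together with the Tauberian Proposition~\ref{PRO:Tauberian} and the locality estimates of Lemma~\ref{LE:Comparision_Loc_infty}. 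Given those, the present theorem follows in a few lines.
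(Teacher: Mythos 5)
Your proposal is correct and matches the paper's own argument: the paper proves Theorem~\ref{THM:Loc_mod_prob} by exactly the same triangle inequality, bounding the first term via Lemma~\ref{LE:trace_com_model_reg} and the second via Lemma~\ref{LE:Aux_weyl_law}, with the $C_N'\hbar^N$ remainder absorbed into $C\hbar^{1+\gamma-d}$. Your remarks on constant dependence and cutoff bookkeeping are consistent with the paper's (even terser) proof, so there is nothing to add.
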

\begin{proof}
Firstly observe that under the assumptions of this theorem we have that $\mathcal{H}_{\hbar,\varepsilon}$ and $H_{\hbar,\varepsilon}$ satisfies the assumptions of Lemma~\ref{LE:trace_com_model_reg}. Moreover, we have that $H_{\hbar,\varepsilon}$ satisfies the assumptions of Lemma~\ref{LE:Aux_weyl_law}. Hence from applying Lemma~\ref{LE:trace_com_model_reg} and Lemma~\ref{LE:Aux_weyl_law} we obtain that
\begin{equation*}
	\begin{aligned}
	  \MoveEqLeft \Big|\Tr\big[\varphi g_\gamma(\mathcal{H}_{\hbar,\varepsilon})\big] - \frac{1}{(2\pi\hbar)^d} \int_{\R^{2d}}g_\gamma(p^2+V_\varepsilon(x)) \varphi(x) \,dx dp \Big| 
	  \\
	  \leq{}& \Big|\Tr \big[\varphi g_\gamma(\mathcal{H}_{\hbar,\varepsilon})]- \varphi g_\gamma(H_{\hbar,\varepsilon})\big] \Big| + \Big|\Tr\big[\varphi g_\gamma(H_{\hbar,\varepsilon})\big] - \frac{1}{(2\pi\hbar)^d} \int_{\R^{2d}}g_\gamma(p^2+V_\varepsilon(x)) \varphi(x) \,dx dp \Big| 
	  \\
	  \leq{}& C \hbar^{1+\gamma-d}.
	  \end{aligned}
\end{equation*}
This concludes the proof.
\end{proof}

\section{Towards a proof of the main theorem}\label{SEC:proof_main}
At the end of this section we will prove our main theorem. But before we do this we will first prove some lemmas that we will need in the proof. The first is a lemma that allow us to localise the trace we consider. The second one is a comparison of phase space integrals.
\subsection{Localisation of traces and comparison of phase-space integrals}
Before we state the lemma on localisation of the trace we recall the following Agmon type estimate from \cite{MR4182014}, where it is Lemma~{A.1}.
\begin{lemma}\label{LE:Agmon_type_lem}
Let $H_\hbar=-\hbar^2\Delta +V$ be a Schr\"odinger operator acting in $L^2(\R^d)$, where $V$ is in $L^1_{loc}(\R^d)$ and suppose that there exist an $\nu>0$ and a open bounded sets $U$ such that
\begin{equation}\label{EQ:Agmon_type_lem}
   	V(x) \geq {\nu} \quad \text{when } x\in U^c.
\end{equation}
Let $d(x) = \dist(x,U_a)$, where
\begin{equation*}
	U_a = \{ x\in\R^d \, |\, \dist(x,U)<a\} 
\end{equation*}
and let $\psi$ be a normalised solution to the equation
\begin{equation*}
    	H_\hbar\psi=E\psi,
\end{equation*}
with $E<\nu/4$. Then there exists a $C>0$ such that
\begin{equation*}
    	\norm{e^{\delta \hbar^{-1} d } \psi }_{L^2(\R^d)} \leq C,
\end{equation*}
for $\delta=\tfrac{\sqrt{\nu}}{8}$. The constant $C$ depends on $a$ and is uniform in $V$, $\nu$ and $U$ satisfying \eqref{EQ:Agmon_type_lem}.
\end{lemma}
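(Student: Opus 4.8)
The plan is to prove the Agmon estimate by a standard exponential-weight (IMS/Agmon) argument applied to the quadratic form of $H_\hbar$. First I would fix the weight function. Let $d(x)=\dist(x,U_a)$, which is Lipschitz with $|\nabla d|\le 1$ a.e., and introduce a bounded Lipschitz cutoff version $d_R=\min(d,R)$ so that $e^{\delta\hbar^{-1}d_R}\psi$ is a priori in $L^2$ and in the form domain; at the end I would let $R\to\infty$ and invoke monotone convergence. Set $\Phi=\delta\hbar^{-1}d_R$ and $\phi=e^{\Phi}\psi$, where $\delta=\tfrac{\sqrt{\nu}}{8}$.

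The key computation is the IMS-type identity
\begin{equation*}
	\Real\,\langle e^{2\Phi}(H_\hbar-E)\psi,\psi\rangle = \hbar^2\norm{\nabla\phi}^2 - \hbar^2\norm{(\nabla\Phi)\phi}^2 + \langle (V-E)\phi,\phi\rangle,
\end{equation*}
which one gets by writing $e^{2\Phi}\nabla\psi = \nabla\phi\cdot e^{\Phi} - (\nabla\Phi)\phi\cdot e^{\Phi}$ (after peeling off $e^{\Phi}$ on both sides) and expanding; the cross terms combine into the indicated form. Since $H_\hbar\psi=E\psi$ the left-hand side vanishes, so
\begin{equation*}
	\hbar^2\norm{\nabla\phi}^2 + \langle(V-E)\phi,\phi\rangle = \hbar^2\norm{(\nabla\Phi)\phi}^2 \le \delta^2\norm{\phi}^2,
\end{equation*}
using $|\nabla\Phi|\le\delta\hbar^{-1}|\nabla d_R|\le\delta\hbar^{-1}$. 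Now I would split the region $\R^d$ into $U_a$ and its complement. On $U_a^c\subset U^c$ we have $V\ge\nu$ by \eqref{EQ:Agmon_type_lem}, so there $V-E\ge\nu-\nu/4=3\nu/4$, while $\delta^2=\nu/64$; dropping the nonnegative gradient term and moving the $U_a$ contribution to the right,
\begin{equation*}
	\tfrac{3\nu}{4}\int_{U_a^c}|\phi|^2\,dx \le \delta^2\norm{\phi}^2 + \int_{U_a}|V-E||\phi|^2\,dx.
\end{equation*}
On $U_a$ one has $d_R\equiv 0$, hence $\phi=\psi$ there, and since $\psi$ is normalised with $\|\psi\|=1$ the last integral is bounded by $\int_{U_a}|V-E|\,|\psi|^2\,dx$; this is finite and uniformly controlled because $U$ is bounded, $V\in L^1_{loc}$, and (as in the cited Lemma~A.1 of \cite{MR4182014}) standard elliptic/subsolution bounds give local $L^\infty$ control of $\psi$ on the fixed bounded set $U_a$ in terms of $\nu$, $U$, $a$ only. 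Absorbing the $\delta^2\int_{U_a^c}|\phi|^2$ piece into the left-hand side (as $\delta^2\ll 3\nu/4$) and adding back $\int_{U_a}|\phi|^2=\int_{U_a}|\psi|^2\le 1$ yields $\norm{e^{\Phi}\psi}^2=\norm{\phi}^2\le C$ with $C$ depending only on $a$ (and the fixed data $\nu,U$), uniformly as stated. Letting $R\to\infty$ gives the claim.

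The main obstacle is the rigorous justification of the weighted form computation for a merely $L^1_{loc}$ potential: one must check that $\phi=e^{\Phi}\psi$ genuinely lies in the form domain so that $\langle e^{2\Phi}(H_\hbar-E)\psi,\psi\rangle$ makes sense and the integration-by-parts identity is valid. The truncation $d_R=\min(d,R)$ together with the fact that $e^{\Phi}$ is then bounded with bounded Lipschitz gradient handles this cleanly, since multiplication by such a weight preserves the form domain; the only genuinely external input is the local boundedness of the eigenfunction $\psi$ on the fixed bounded set $U_a$, which is exactly the ingredient supplied in \cite{MR4182014} and which makes the constant uniform in $V$, $\nu$, $U$. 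Everything else is routine manipulation, so I would keep that part brief and refer to \cite{MR4182014} for the local regularity step.
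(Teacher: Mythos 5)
The paper does not actually reprove this lemma: it is imported verbatim from \cite{MR4182014} (Lemma A.1), so the comparison here is against what that standard Agmon argument must contain. Your weighted identity is correct, the truncation $d_R=\min(d,R)$ and the form-domain justification are fine, and the absorption of $\delta^2=\nu/64$ into $3\nu/4$ works. The genuine gap is in how you treat the region $U_a$, where $V$ is only assumed to lie in $L^1_{loc}$. You move $\int_{U_a}(V-E)|\phi|^2\,dx$ to the right-hand side and propose to control $\int_{U_a}|V-E|\,|\psi|^2\,dx$ via a local $L^\infty$ bound on $\psi$. This fails on two counts: (i) subsolution/De Giorgi--Nash--Moser estimates require $V_-$ in a Kato class or in $L^p$ with $p>d/2$; for a bare $L^1_{loc}$ potential local boundedness of eigenfunctions is simply false (e.g.\ Hardy-type singularities $-c|x|^{-2}\in L^1_{loc}(\R^d)$, $d\geq 3$, produce eigenfunctions blowing up at the singularity), and nothing of the sort is "supplied" by the cited lemma; (ii) even granting such a bound, your constant would involve $\norm{V}_{L^1(U_a)}$ and the constants of the elliptic estimate, which destroys the uniformity in $V$ asserted in the statement and needed in the paper (the lemma is applied to the $\hbar$- and $\varepsilon$-dependent framing potentials).

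The standard repair, and the reason the lemma can be stated for $V\in L^1_{loc}$, is to arrange that $V$ is never integrated over $U_a$ at all. Fix a Lipschitz cutoff $\theta$ with $\theta=0$ on $U_{a/2}$, $\theta=1$ on $U_a^c$, $|\nabla\theta|\leq C/a$, and test the eigenvalue equation against $\theta^2e^{2\Phi}\psi$. The IMS localisation identity gives
\begin{equation*}
  \hbar^2\norm{\nabla(\theta e^{\Phi}\psi)}^2-\hbar^2\norm{\,|\nabla(\theta e^{\Phi})|\,\psi}^2+\int_{\R^d}(V-E)\,\theta^2e^{2\Phi}|\psi|^2\,dx=0 .
\end{equation*}
Now the potential only appears on $\supp\theta\subset U^c$, where $V-E\geq 3\nu/4$; the $\theta^2|\nabla\Phi|^2$ part of the middle term is absorbed exactly as in your argument; and the $|\nabla\theta|^2$ part is supported in $U_a\setminus U_{a/2}$, where $\Phi=0$, so it contributes at most $C_a\hbar^2\norm{\psi}^2\leq C_a\hbar^2$. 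Adding $\int_{U_a}e^{2\Phi}|\psi|^2=\int_{U_a}|\psi|^2\leq 1$ then yields the claim with a constant depending on $a$ only and manifestly uniform in $V$ and $U$ satisfying \eqref{EQ:Agmon_type_lem}. With this modification the rest of your write-up goes through unchanged.
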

In the formulation of the lemma, we have presented here we consider $U_a$ for $a>0$ and not just $U_1$ as in \cite{MR4182014}. There are no difference in the proof. However, one thing to remark is that the constant $C$ will diverge to infinity as $a$ tends to $0$. Moreover, we have in the statement highlighted the uniformity of the constant in the potential $V$, the number $\nu$ and the set $U$. That the constant is indeed uniform in these follows directly from the proof given in \cite{MR4182014}.
\begin{lemma}\label{LE:localise_trace}
Let $\gamma\in[0,1]$ and $H_\hbar=-\hbar^2\Delta +V$ be a Schr\"odinger operator acting in $L^2(\R^d)$, where $V$ is in $L^1_{loc}(\R^d)$ and suppose that there exist an $\nu>0$ and a open bounded sets $U$ such that $V(x)\boldsymbol{1}_U(x)\in L^{\gamma+\frac{d}{2}}(\R^d)$ and
\begin{equation}\label{EQLE:localise_trace}
   	V(x) \geq {\nu} \quad \text{when } x\in U^c.
\end{equation}
Fix $a>0$ and let $\varphi\in C_0^\infty(\R^d)$ such that $\varphi(x)=1$ for all $x\in U_a$, where
\begin{equation*}
	U_a = \{ x\in\R^d \, |\, \dist(x,U)<a\}. 
\end{equation*}
Then for every $N\in\N$ it holds that
\begin{equation*}
	\Tr \big[g_\gamma(H_\hbar)\big] = \Tr \big[g_\gamma(H_\hbar)\varphi\big] + C_N \hbar^N,
\end{equation*}
where the constant is $C_N$ depends on $a$ and is uniform in $V$, $\nu$ and $U$ satisfying \eqref{EQLE:localise_trace}. 
\end{lemma}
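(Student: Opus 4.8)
The plan is to estimate the difference directly through the spectral resolution of $H_\hbar$, using the exponential localisation of its low‑lying eigenfunctions provided by the Agmon estimate. Since $g_\gamma(H_\hbar)$ is trace class (see below) and $1-\varphi$ is bounded, we have
\[
\Tr\big[g_\gamma(H_\hbar)\big] - \Tr\big[g_\gamma(H_\hbar)\varphi\big] = \Tr\big[g_\gamma(H_\hbar)(1-\varphi)\big],
\]
so the task is to show that the right‑hand side is $O(\hbar^N)$ for every $N$.

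First I would record the needed structural facts. Because $U$ is bounded and $V\geq\nu$ on $U^c$, while $V\boldsymbol{1}_U\in L^{\gamma+d/2}(\R^d)$ controls the negative part $V_-\leq\abs{V}\boldsymbol{1}_U$, the operator $H_\hbar$ is lower semibounded and its spectrum below $\nu$ is discrete (the resolvent difference with $-\hbar^2\Delta+\nu$ is compact, or directly $V_-\in L^{d/2}$). Moreover the Cwikel--Lieb--Rozenblum inequality (for $\gamma=0$, whence $d\geq3$) respectively the Lieb--Thirring inequality (for $\gamma\in(0,1]$) give $g_\gamma(H_\hbar)\in\mathfrak{S}_1$ together with the a priori bound $\Tr[g_\gamma(H_\hbar)]\leq C\hbar^{-d}$, with $C$ depending only on the dimension and on $\norm{V\boldsymbol{1}_U}_{L^{\gamma+d/2}(\R^d)}$. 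Writing the eigenvalues of $H_\hbar$ below $\nu$ as $E_1\leq E_2\leq\cdots$ with $L^2$‑normalised eigenfunctions $\psi_j$, only the indices with $E_j\leq0<\nu/4$ contribute to $g_\gamma(H_\hbar)$, and for those Lemma~\ref{LE:Agmon_type_lem} applies. Hence
\[
\Tr\big[g_\gamma(H_\hbar)(1-\varphi)\big] = \sum_{j:\,E_j\leq0} g_\gamma(E_j)\,\scp{(1-\varphi)\psi_j}{\psi_j},\qquad \abs{\scp{(1-\varphi)\psi_j}{\psi_j}}\leq \norm{1-\varphi}_{L^\infty}\int_{U_a^c}\abs{\psi_j(x)}^2\,dx,
\]
since $1-\varphi$ vanishes on $U_a$.

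The key step is the pointwise decay bound, obtained by applying Lemma~\ref{LE:Agmon_type_lem} with the smaller parameter $a/2$ in place of $a$. For $x\notin U_a$ one has $\dist(x,U)\geq a$, hence $\dist(x,U_{a/2})\geq a/2$, so with $d_{a/2}(x)=\dist(x,U_{a/2})$ and $\delta=\sqrt{\nu}/8$,
\[
\int_{U_a^c}\abs{\psi_j(x)}^2\,dx \leq e^{-\delta a/\hbar}\int_{\R^d} e^{2\delta\hbar^{-1}d_{a/2}(x)}\abs{\psi_j(x)}^2\,dx = e^{-\delta a/\hbar}\,\norm{e^{\delta\hbar^{-1}d_{a/2}}\psi_j}_{L^2(\R^d)}^2 \leq C^2\,e^{-\delta a/\hbar},
\]
where $C$ is the Agmon constant, depending on $a$ but uniform in $V$, $\nu$ and $U$ satisfying \eqref{EQLE:localise_trace}. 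Combining this with the expansion and the a priori bound gives
\[
\abs{\Tr\big[g_\gamma(H_\hbar)(1-\varphi)\big]} \leq \norm{1-\varphi}_{L^\infty}\,C^2\,e^{-\delta a/\hbar}\sum_{j:\,E_j\leq0}\abs{g_\gamma(E_j)} = \norm{1-\varphi}_{L^\infty}\,C^2\,e^{-\delta a/\hbar}\,\Tr\big[g_\gamma(H_\hbar)\big] \leq C'\,\hbar^{-d}\,e^{-\delta a/\hbar},
\]
and since $e^{-\delta a/\hbar}$ decays faster than any power of $\hbar$, the right‑hand side is bounded by $C_N\hbar^N$ for every $N\in\N$ once $\hbar$ is small enough, with $C_N$ depending only on $N$, $a$ and $\norm{V\boldsymbol{1}_U}_{L^{\gamma+d/2}}$; in particular it is uniform in $V,\nu,U$ in the stated sense.

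I expect the only delicate points to be bookkeeping ones: making sure the a priori trace bound and the Agmon constant depend on $V$, $\nu$, $U$ only through the quantities allowed (which for the trace bound means going through $\norm{V\boldsymbol{1}_U}_{L^{\gamma+d/2}}$ rather than $V$ itself, and noting that $\gamma=0$ forces $d\geq3$, exactly the regime of application), and the routine justification that the spectrum below $\nu$ consists of isolated eigenvalues of finite multiplicity so that the eigenfunction expansion of $\Tr[g_\gamma(H_\hbar)(1-\varphi)]$ is legitimate.
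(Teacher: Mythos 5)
Your proposal is correct and follows essentially the same route as the paper: split off $\Tr[g_\gamma(H_\hbar)(1-\varphi)]$, expand in the eigenbasis, apply Lemma~\ref{LE:Agmon_type_lem} with the intermediate set $U_{a/2}$ to get superpolynomial smallness of $\int_{U_a^c}|\psi_j|^2dx$, and absorb the sum via an a priori $O(\hbar^{-d})$ bound on the full trace. The only cosmetic difference is that you keep the factor $e^{-\delta a/\hbar}$ and invoke CLR/Lieb--Thirring for the a priori bound, whereas the paper converts the exponential weight into $C_N\hbar^{2N}$ pointwise and cites Theorem~\ref{Thm:Frank} (via Remark~\ref{RE:use_of_thm_1}) for that bound.
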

\begin{proof}
Using linearity of the trace we have that
\begin{equation}\label{EQ:localise_trace}
	\Tr\big[g_\gamma( H_{\hbar}) \big]= \Tr\big[g_\gamma( H_{\hbar})\varphi \big] + \Tr\big[g_\gamma( H_{\hbar})(1-\varphi) \big].
\end{equation}
For the second term on the right hand side of \eqref{EQ:localise_trace} we calculate the trace in a normalised basis of eigenfunctions  for $ H_{\hbar}$ called  $\psi_n$ with eigenvalue $E_n$.   
\begin{equation}\label{EQ:localise_trace_2}
	 \Tr\big[g_\gamma( H_{\hbar})(1-\varphi) \big] 
	 = \sum_{E_n\leq0} \langle g_\gamma(H_{\hbar})(1-\varphi) \psi_n , \psi_n \rangle
	 = \sum_{E_n\leq0} g_\gamma(E_n) \norm{\sqrt{1-\varphi} \psi_n }_{L^2(\R^d)}^2. 
\end{equation}
To estimate the $L^2$-norms we let $d(x) = \dist(x,U_{\frac12a})$. For all $x\in \supp(1-\varphi)$ we have that $d(x)>0$ since $\varphi(x)=1$ for all $x\in U_a$. We get from Lemma~\ref{LE:Agmon_type_lem} that there exists a constant $C$ depending on $a$ such that for all normalised eigenfunctions $\psi_n$ with eigenvalue less than $\frac{\nu}{4}$ we have the estimate
\begin{equation}\label{EQ:localise_trace_3}
	\norm{e^{\tilde{\delta} \hbar^{-1} d } \psi_n }_{L^2(\R^d)} \leq C,
\end{equation}
where $\tilde{\delta}=\frac{\sqrt{\nu}}{8}$ and $C$ is uniform in $V$, $\nu$ and $U$ satisfying \eqref{EQLE:localise_trace}.  Using this estimate and the observations made for $d(x)$ we get for all norms in \eqref{EQ:localise_trace_2} and all $N\in\N$ the estimate
\begin{equation}\label{EQ:localise_trace_4}
	\begin{aligned}
	\norm{\sqrt{1-\varphi} \psi_n }_{L^2(\R^d)}^2 
	&\leq  \norm{\sqrt{1-\varphi} e^{-\tilde{\delta} \hbar^{-1} d }}_{L^\infty(\R^d)}^2 \norm{ e^{\tilde{\delta} \hbar^{-1} d }   \psi_n }_{L^2(\R^d)}^2
	\\
	&\leq C \big\lVert \sqrt{1-\varphi} \big( \tfrac{\hbar}{\tilde{\delta} d}\big)^N \big(\tfrac{\tilde{\delta} d}{\hbar}\big)^N e^{-\tilde{\delta} \hbar^{-1} d }\big\rVert_{L^\infty(\R^d)}^2 \leq C_N \hbar^{2N}.
	 \end{aligned}
\end{equation}
Combining \eqref{EQ:localise_trace_2} with the estimate obtained in \eqref{EQ:localise_trace_4} we get for all $N\in\N$ that
\begin{equation}\label{EQ:localise_trace_5}
	\begin{aligned}
	 \Tr\big[g_\gamma( H_{\hbar})(1-\varphi) \big] 
	 &\leq C_N  \hbar^{2N}  \sum_{E_n\leq0} g_\gamma(E_n) = C_N  \hbar^{2N}   \Tr\big[g_\gamma( H_{\hbar})\big] \leq  \tilde{C}_N  \hbar^{2N-d}, 
	 \end{aligned}
\end{equation}
where we in the last estimate have used Theorem~\ref{Thm:Frank}. Combining \eqref{EQ:localise_trace} and  \eqref{EQ:localise_trace_5} we obtain the desired estimate. 
\end{proof}
\begin{remark}
When we will apply the above Lemma we need to ensure the constant is the same for the two cases we consider. To ensure this we use Theorem~\ref{Thm:Frank} as descibed in Remark~\ref{RE:use_of_thm_1} at the end of the proof.
\end{remark}
The next lemma is a result on comparing phase-space integrals. Similar estimates are obtained with different methods in \cite{mikkelsen2022optimal}. These are parts of larger proofs and not an independent lemma. The following Lemma is taken from \cite{mikkelsen2023sharp}, where it is Lemma~{5.1}.
\begin{lemma}\label{LE:comparison_phase_space_int}
Suppose $\Omega\subset\R^d$ is an open set and let $\varphi\in C_0^\infty(\Omega)$. Moreover, let $\varepsilon>0$, $\hbar\in(0,\hbar_0]$ and  $V,V_\varepsilon\in L^1_{loc}(\R^d)\cap C(\Omega)$. Suppose that 
\begin{equation}\label{EQLE:comparison_phase_space_int}
	\norm{V-V_\varepsilon}_{L^\infty(\Omega)}\leq c\varepsilon^{k+\mu}.
\end{equation}
Then for $\gamma\in[0,1]$ and $\varepsilon$ sufficiently small it holds that
\begin{equation}\label{LEEQ:Loc_mod_prob_5}
	\begin{aligned}
	 \Big| \int_{\R^{2d}} [g_\gamma(p^2+V_\varepsilon(x))-g_\gamma(p^2+V(x))]\varphi(x) \,dx dp  \Big| 
	 \leq C\varepsilon^{k+\mu},
	  \end{aligned}
\end{equation}
where the constant $C$ depends on the dimension and the numbers $\gamma$ and $c$  in  \eqref{EQLE:comparison_phase_space_int}.
\end{lemma}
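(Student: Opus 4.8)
The plan is to reduce the phase-space comparison to a pointwise estimate on the function $p\mapsto g_\gamma(p^2+t)$ in the energy variable $t$, integrate out the momentum variable explicitly, and then exploit that the $x$-integration is confined to the compact set $\supp(\varphi)$, where $V$ and $V_\varepsilon$ are close by \eqref{EQLE:comparison_phase_space_int}. The starting point is the elementary identity obtained by performing the $p$-integral in polar coordinates: for $\gamma\in(0,1]$ one has
\begin{equation*}
	\int_{\R^d} (p^2+t)_-^\gamma \,dp = L_{\gamma,d}\, (t)_-^{\gamma+\frac d2},
\end{equation*}
with $L_{\gamma,d}$ the classical Lieb--Thirring constant, and for $\gamma=0$ the analogous identity $\int_{\R^d}\boldsymbol 1_{(-\infty,0]}(p^2+t)\,dp = \omega_d (t)_-^{d/2}$ with $\omega_d$ the volume of the unit ball. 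Thus the left-hand side of \eqref{LEEQ:Loc_mod_prob_5} equals
\begin{equation*}
	L_{\gamma,d}\Big| \int_{\R^d}\big[(V_\varepsilon(x))_-^{\gamma+\frac d2} - (V(x))_-^{\gamma+\frac d2}\big]\varphi(x)\,dx\Big|
\end{equation*}
(with the obvious modification when $\gamma=0$), so the whole matter is reduced to a one-variable estimate on the map $s\mapsto (s)_-^{\gamma+d/2}$.

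Next I would use that $s\mapsto (s)_-^{\gamma+d/2}$ is locally Lipschitz (indeed $C^1$ since $\gamma+d/2\ge 1$), so that on any bounded interval $|(s_1)_-^{\gamma+d/2}-(s_2)_-^{\gamma+d/2}|\le C_R|s_1-s_2|$ whenever $|s_1|,|s_2|\le R$. On $\supp(\varphi)\subset\Omega$ both $V$ and $V_\varepsilon$ are continuous; $V$ is bounded on the compact set $\supp(\varphi)$, and by \eqref{EQLE:comparison_phase_space_int} with $\varepsilon$ sufficiently small $V_\varepsilon$ is bounded there too (uniformly in $\varepsilon$). Hence the local Lipschitz bound applies with an $R$ depending only on $\|V\|_{L^\infty(\supp\varphi)}$ and $c$, giving
\begin{equation*}
	\big|(V_\varepsilon(x))_-^{\gamma+\frac d2} - (V(x))_-^{\gamma+\frac d2}\big| \le C_R\, |V_\varepsilon(x)-V(x)| \le C_R c\,\varepsilon^{k+\mu}
\end{equation*}
for every $x\in\supp(\varphi)$. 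Integrating this against $|\varphi(x)|$ over the set $\supp(\varphi)$, whose Lebesgue measure is a fixed finite number depending only on $\varphi$, produces the bound $C\varepsilon^{k+\mu}$ with $C$ depending on the dimension, $\gamma$, $c$, and the support and sup-norm of $\varphi$ — as claimed. The case $\gamma=0$ is handled identically with $(s)_-^{\gamma+d/2}$ replaced by $(s)_-^{d/2}$, which is still $C^1$ for $d\ge 3$ (in fact for $d\ge 2$), so no separate argument is needed.

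The only real subtlety — and the step I would be most careful about — is making the constant genuinely independent of $\varepsilon$: this requires that $V_\varepsilon$ stay in a fixed bounded region on $\supp(\varphi)$ as $\varepsilon\to 0$, which is exactly what \eqref{EQLE:comparison_phase_space_int} guarantees once $\varepsilon$ is small enough that $c\varepsilon^{k+\mu}\le 1$, say. One also wants to note that the polar-coordinate integral identities hold verbatim because $g_\gamma(p^2+t)$ is, for fixed $t$, a radial function of $p$ that is compactly supported (in $p$) and bounded when $t<0$ and identically zero when $t\ge 0$; there is no integrability issue. I do not expect any other obstacle; the remainder is the routine verification sketched above.
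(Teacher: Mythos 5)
Your argument is correct: integrating out $p$ via the identity $\int_{\R^d} g_\gamma(p^2+t)\,dp = L_{\gamma,d}\,(t)_-^{\gamma+d/2}$ reduces the claim to a local Lipschitz bound for $t\mapsto (t)_-^{\gamma+d/2}$ (valid since $\gamma+\tfrac d2\ge 1$ in the regimes the paper actually uses, $d\ge 3$; at $d=2$, $\gamma=0$ the function is Lipschitz though not $C^1$), with uniform boundedness of $V,V_\varepsilon$ on $\supp(\varphi)$ supplied by continuity and \eqref{EQLE:comparison_phase_space_int} for small $\varepsilon$ --- this is essentially the computation behind the cited Lemma~5.1 of \cite{mikkelsen2023sharp}, whose proof the paper does not reproduce. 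The only point worth flagging is that your constant necessarily also depends on $\sup_{\supp(\varphi)}|V|$ and on $\varphi$ (not only on $d$, $\gamma$ and $c$ as the terse statement suggests), which is unavoidable and harmless in the application, where the rescaled potentials and cut-offs are uniformly bounded in $k$ and $\hbar$.
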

\subsection{Proof of main theorem}
The proof of the main theorem is based on a multi-scale argument. Before we prove the main theorem by using these techniques we will recall the following crucial lemma.
\begin{lemma}\label{LE:partition_lemma}
  Let $\Omega\subset\R^d$ be an open set and let $l$ be a function in
  $C^1(\bar{\Omega})$ such that $l>0$ on $\bar{\Omega}$ and assume
  that there exists $\rho$ in $(0,1)$ such that
  \begin{equation}
    \abs{\nabla_x l(x)} \leq \rho,
  \end{equation}
  for all $x$ in $\Omega$. 
  
  Then
  \begin{enumerate}[label=$\roman*)$]
	
  \item There exists a sequence $\{x_k\}_{k=0}^\infty$ in $\Omega$
    such that the open balls $B(x_k,l(x_k))$ form a covering of
    $\Omega$. Furthermore, there exists a constant $N_\rho$, depending only
    on the constant $\rho$, such that the intersection of more than
    $N_\rho$ balls is empty.
		
  \item One can choose a sequence $\{\varphi_k\}_{k=0}^\infty$ such
    that $\varphi_k \in C_0^\infty(B(x_k,l(x_k)))$ for all $k$ in
    $\N$. Moreover, for all multiindices $\alpha$ and all $k$ in $\N$
    \begin{equation*}
      \abs{\partial_x^\alpha \varphi_k(x)}\leq C_\alpha l(x_k)^{-{\abs{\alpha}}},
    \end{equation*} 	   
    and
    \begin{equation*}
      \sum_{k=1}^\infty \varphi_k(x) = 1,
    \end{equation*}
    for all $x$ in $\Omega$.
  \end{enumerate}
\end{lemma}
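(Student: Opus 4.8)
The plan is to deduce everything from one elementary observation — the \emph{slow variation} of $l$. Since $\abs{\nabla_x l}\le\rho$ on $\Omega$, the mean value theorem gives $\abs{l(x)-l(y)}\le\rho\abs{x-y}$; consequently, whenever $\abs{x-y}\le l(x)$ we get $(1-\rho)l(x)\le l(y)\le(1+\rho)l(x)$, so the radii of overlapping balls are comparable with constants depending on $\rho$ alone. I would record this first as a preliminary.

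For part $(i)$ I would produce the covering by a maximal packing argument: by Zorn's lemma choose $\{x_k\}\subset\Omega$ maximal with the property that the balls $B(x_k,\tfrac13 l(x_k))$ are pairwise disjoint; since disjoint balls carry positive and additive Lebesgue measure, the family is countable. If $x\in\Omega$, maximality forces $B(x,\tfrac13 l(x))$ to meet some $B(x_k,\tfrac13 l(x_k))$, hence $\abs{x-x_k}\le\tfrac13(l(x)+l(x_k))$, and plugging this into $\abs{l(x)-l(x_k)}\le\rho\abs{x-x_k}$ gives $l(x)\le\frac{3+\rho}{3-\rho}l(x_k)$, so that
\begin{equation*}
	\abs{x-x_k}\le\frac13\Big(\frac{3+\rho}{3-\rho}+1\Big)l(x_k)=\frac{2}{3-\rho}\,l(x_k)=:\beta_\rho\, l(x_k),\qquad \beta_\rho=\tfrac{2}{3-\rho}<1.
\end{equation*}
Thus the balls $B(x_k,l(x_k))$ cover $\Omega$, and in fact already the concentric balls $B(x_k,\beta_\rho l(x_k))$ do — a point I would keep for part $(ii)$. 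For the finite-overlap constant I would fix $x\in\R^d$, set $K=\{k:\abs{x-x_k}<l(x_k)\}$, note that slow variation makes all $l(x_k)$, $k\in K$, comparable to $l(x)$, observe that the pairwise disjoint balls $B(x_k,\tfrac13 l(x_k))$, $k\in K$, each contain a ball of radius $\gtrsim_\rho l(x)$ while all lying inside a ball of radius $\lesssim_\rho l(x)$ about $x$, and conclude by comparing volumes that $\#K\le\big(\tfrac{4(1+\rho)}{1-\rho}\big)^d=:N_\rho$.

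For part $(ii)$ I would fix $\beta_\rho<\beta'<1$, take $\eta\in C_0^\infty(\R^d)$ with $0\le\eta\le1$, $\eta\equiv1$ on $\{\abs{y}\le\beta_\rho\}$ and $\supp\eta\subset\{\abs{y}\le\beta'\}$, and set $\psi_k(x)=\eta\big(\tfrac{x-x_k}{l(x_k)}\big)$ and $\Psi=\sum_k\psi_k$; then $\psi_k\in C_0^\infty(B(x_k,l(x_k)))$ and $\abs{\partial_x^\alpha\psi_k}\le C_\alpha l(x_k)^{-\abs\alpha}$ by scaling. The sum $\Psi$ is locally finite on $\Omega$: near an interior point $l$ is pinched between two positive constants (by $l>0$ on $\bar\Omega$ and slow variation), which confines the centres and the radii of the finitely many balls that can reach that point. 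Hence $\Psi\in C^\infty(\Omega)$ and $1\le\Psi\le N_\rho$ there (the lower bound since the $B(x_k,\beta_\rho l(x_k))$ already cover $\Omega$, the upper bound by finite overlap). I would then set $\varphi_k=\psi_k/\Psi$, so that $\sum_k\varphi_k\equiv1$ on $\Omega$ and $\varphi_k\in C_0^\infty(B(x_k,l(x_k)))$, and — using the quotient rule together with the facts that any $\partial^\beta\Psi$ is locally a sum of at most $N_\rho$ terms $\partial^\beta\psi_j$ with $l(x_j)$ comparable to $l(x_k)$ and that $\Psi\ge1$ — obtain $\abs{\partial_x^\alpha\varphi_k}\le C_\alpha l(x_k)^{-\abs\alpha}$.

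The only genuinely quantitative step is the finite-overlap count in part $(i)$, and its entire content is the comparability of neighbouring radii forced by $\rho<1$; everything else is the standard Besicovitch-covering and partition-of-unity machinery, the one technical nuisance being the local finiteness of $\Psi$, which again is immediate from slow variation.
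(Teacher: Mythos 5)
The paper itself gives no proof of this lemma; it simply defers to H\"ormander's Theorem~1.4.10 (via Lemma~5.4 of the cited work of Ivrii), and your argument is precisely that standard construction: a maximal packing by the shrunken balls $B(x_k,\tfrac13 l(x_k))$, the slow-variation inequality $(1-\rho)l(x)\le l(y)\le (1+\rho)l(x)$ for $\abs{x-y}\le l(x)$ to upgrade the packing to a covering and to run the volume count for $N_\rho$, and rescaled bump functions. Part $(i)$, including the computation $\beta_\rho=2/(3-\rho)<1$ and the bound $N_\rho=(4(1+\rho)/(1-\rho))^d$, and the scaling bounds $\abs{\partial^\alpha\psi_k}\le C_\alpha l(x_k)^{-\abs{\alpha}}$ are correct as written. (One cosmetic caveat: deducing the Lipschitz bound from $\abs{\nabla l}\le\rho$ uses segments that must remain in $\Omega$; for non-convex $\Omega$ this is really an imprecision of the lemma's formulation rather than of your proof, and in the paper's application $l$ is globally defined, so I would only flag it in passing.)

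The one step that fails as written is the normalization $\varphi_k=\psi_k/\Psi$. You establish $\Psi\ge 1$ only on $\Omega$, but the balls $B(x_k,l(x_k))$, and hence the supports of the $\psi_k$, need not be contained in $\Omega$; on $\supp\psi_k\setminus\Omega$ the sum $\Psi$ may decay to zero, and then $\psi_k/\Psi$ is not smooth, indeed not even continuous. Concretely, take $\Omega=B(0,\tfrac1{10})$ and $l\equiv 1$: the maximal packing consists of a single centre, $\Psi=\psi_1$, and $\psi_1/\Psi$ is the indicator function of $\{\psi_1>0\}$. The repair is standard and short: either run the construction with $l$ extended to a neighbourhood of $\bar\Omega$ on which it stays bounded below (in the paper's application $l$ is smooth and bounded below on all of $\R^d$, so $\Psi\ge1$ everywhere and your quotient is legitimate), or avoid division altogether via the telescoping product $\varphi_1=\psi_1$, $\varphi_k=\psi_k\prod_{j<k}(1-\psi_j)$. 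The latter satisfies $\sum_{k\le K}\varphi_k=1-\prod_{k\le K}(1-\psi_k)$, which equals $1$ on $\Omega$ because every point of $\Omega$ lies in some $B(x_k,\beta_\rho l(x_k))$ where $\psi_k=1$; it keeps $\supp\varphi_k\subset\supp\psi_k\subset B(x_k,l(x_k))$; and it inherits $\abs{\partial^\alpha\varphi_k}\le C_\alpha l(x_k)^{-\abs{\alpha}}$, since near any point of $\supp\psi_k$ at most $N_\rho$ of the factors are non-constant and the corresponding radii $l(x_j)$ are comparable to $l(x_k)$ by the overlap of their balls. With that replacement the proof is complete and coincides with the argument the paper cites.
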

This lemma is taken from \cite{MR1343781} where it is Lemma 5.4. The proof is analogous to the proof of \cite[Theorem
1.4.10]{MR1996773}. We are now ready to prove the main theorem.

\begin{proof}[Proof of Theorem~\ref{Thm:Main}]
Let $H_{\hbar,\varepsilon}^{\pm}$ be the two framing operators constructed in Lemma~\ref{LE:framing_operators}, where we choose $\varepsilon=\hbar^{1-\delta}$. For $\gamma=0$ we choose $\delta=\frac{\mu}{1+\mu}$ and if $\gamma>0$ we choose $\delta=\frac{1+\mu-\gamma}{2+\mu}$. Note that our assumptions on $\mu$ will in all cases ensure that $\delta\geq\frac13$. Moreover, we get that
\begin{equation}\label{EQ:proof_main_0}
	\begin{aligned}
	\varepsilon^{1+\mu} &= \hbar &\text{$\gamma=0$}
	\\
	\varepsilon^{2+\mu} &= \hbar^{1+\gamma} &\text{$\gamma>0$}.
	\end{aligned}
\end{equation}
Since we have that $H_{\hbar,\varepsilon}^{-} \leq H_\hbar \leq H_{\hbar,\varepsilon}^{+}$ in the sense of quadratic forms. It follows from the min-max theorem that
\begin{equation}\label{EQ:proof_main_1}
	\Tr\big[g_\gamma( H_{\hbar,\varepsilon}^{+}) \big] \leq \Tr\big[g_\gamma( H_{\hbar}) \big] \leq \Tr\big[g_\gamma( H_{\hbar,\varepsilon}^{-}) \big].
\end{equation}
The aim is now to obtain spectral asymptotics for $\Tr\big[g_\gamma( H_{\hbar,\varepsilon}^{+}) \big]$ and $\Tr\big[g_\gamma( H_{\hbar,\varepsilon}^{-}) \big]$. The arguments will be analogous so we will drop the superscript $\pm$ for the operator $H_{\hbar,\varepsilon}^{\pm}$ in what follows. Let $\varphi\in C_0^\infty(\R^d)$ with $\varphi(x)=1$ for all $x\in \Omega_{\tilde{\nu},V_\varepsilon}$ and $\supp(\varphi)\subset \Omega_{2\tilde{\nu},V_\varepsilon}$. Then applying Lemma~\ref{LE:localise_trace} we obtain for all $N\in\N$ that
\begin{equation}\label{EQ:proof_main_2}
	\Tr\big[g_\gamma( H_{\hbar,\varepsilon}) \big]= \Tr\big[g_\gamma( H_{\hbar,\varepsilon})\varphi \big] +C_N\hbar^N.
\end{equation}
For the terms $\Tr\big[g_\gamma( H_{\hbar,\varepsilon})\varphi \big]$ we use a multiscale argument such that  we locally can apply Theorem~\ref{THM:Loc_mod_prob}.  Recall that from from Lemma~\ref{LE:framing_operators} we have that
\begin{equation*}
	V_\varepsilon(x) = V^1_\varepsilon(x) +V^2(x)  \pm C\varepsilon^{\tau+\mu},
\end{equation*}
where $\supp(V^2)\cap\Omega_{4\tilde{\nu},V_\varepsilon}^C =\emptyset$ and $V^1_\varepsilon\in C_0^\infty(\R^d)$. We let $\varphi_1\in C_0^\infty(\R^d)$ such that $\varphi_1(x) = 1$ for all  $x\in\Omega_{2\tilde{\nu},V_\varepsilon}$ and $\supp(\varphi_1)\subset \Omega_{4\tilde{\nu},V_\varepsilon}$. With this function we have that 
\begin{equation}
	\varphi_1(x)V_\varepsilon^{\pm}(x) = \varphi_1(x)(V^1_\varepsilon(x)   \pm C\varepsilon^{\tau+\mu}).
\end{equation}
Note that with these assumptions on $\varphi_1(x)$ we have that $\varphi_1(x)\varphi(x)=\varphi(x)$ for all $x\in\R^d$. This observations ensures that when we define our localisation function $l(x)$ below it is positive on the set $\supp(\varphi)$.
Before we define our localisation functions we remark that due to the continuity of $V_\varepsilon$ on $\Omega_{4\tilde{\nu},V_\varepsilon}$ there exists a number $\epsilon>0$ such that
\begin{equation*}
	\dist(\supp(\varphi), \Omega_{2\tilde{\nu},V_\varepsilon}^{c}) >\epsilon.
\end{equation*}
The number $\epsilon$ is important for our localisation functions. As we need to ensure the supports are contained in the set $\Omega_{2\tilde{\nu},V_\varepsilon}$. We let 
\begin{equation*}
	l(x) = A^{-1}\sqrt{ |\varphi_1(x)V_\varepsilon(x)|^2 + \hbar^\frac{4}{3}} \quad\text{and}\quad f(x)=\sqrt{l(x)}. 
\end{equation*} 
Where we choose $A >0$ sufficiently large such that
\begin{equation}\label{EQ:proof_main_3}
	l(x) \leq \frac{\epsilon}{9}  \quad\text{and}\quad |\nabla l(x)|\leq \rho <\frac{1}{8}
\end{equation} 
for all $x\in\overline{\supp(\varphi)}$. Note that due to our assumptions on $V_\varepsilon$ we can choose $A$ independent of $\hbar$ and uniformly for $\hbar\in(0,\hbar_0]$. Moreover, we have that
\begin{equation}\label{EQ:proof_main_4}
	|\varphi_1(x)V_\varepsilon(x)| \leq A l(x) 
\end{equation}
for all $x\in\R^d$. By Lemma~\ref{LE:partition_lemma} with the set $\supp(\varphi)$ and the function $l(x)$ there exists a sequence $\{x_k\}_{k=1}^\infty$ in $\supp(\varphi)$ such that $\supp(\varphi) \subset \cup_{k\in\N} B(x_k,l(x_k))$ and there exists a constant $N_{\frac{1}{8}}$ such that at most $N_{\frac{1}{8}}$ of the sets $B(x_k,l(x_k))$ can have a non-empty intersection. Moreover, there exists a sequence $\{\varphi_{k}\}_{k=1}^\infty$ such that $\varphi_k\in C_0^\infty(B(x_k,l(x_k)))$,
\begin{equation}\label{EQ:proof_main_5}
	\big| \partial_x^\alpha \varphi_k(x) \big| \leq C_\alpha l(x_k)^{-|\alpha|} \qquad\text{for all $\alpha\in\N_0$},
\end{equation}
and
\begin{equation*}
	\sum_{k=1}^\infty \varphi_k(x) =1  \qquad\text{for all $\supp(\varphi)$}.
\end{equation*}
We have that $ \cup_{k\in\N} B(x_k,l(x_k))$ is an open covering of $\supp(\varphi)$ and since this set is compact there exists a finite subset $\mathcal{I}'\subset \N$ such that  
\begin{equation*}
	\supp(\varphi) \subset \bigcup_{k\in\mathcal{I}'} B(x_k,l(x_k)).
\end{equation*}
In order to ensure that we have a finite partition of unity over the set $\supp(\varphi)$ we define the set
\begin{equation*}
	\mathcal{I} = \bigcup_{j\in\mathcal I'} \big\{ k\in\N \,|\, B(x_k,l(x_k))\cap B(x_j,l(x_j)) \neq \emptyset \big\}.
\end{equation*}
Then we have that $\mathcal{I} $ is still finite since at most $N_{\frac{1}{8}}$ balls can have non-empty intersection. Moreover, we have that
\begin{equation*}
	\sum_{k\in\mathcal{I}} \varphi_k(x) =1  \qquad\text{for all $\supp(\varphi)$}.
\end{equation*}
From this we get the following identity
\begin{equation}\label{EQ:proof_main_6}
	 \Tr\big[\varphi \boldsymbol{1}_{ (-\infty,0]}(H_{\hbar,\varepsilon})\big] = \sum_{k\in\mathcal{I}}  \Tr\big[\varphi_k \varphi \boldsymbol{1}_{ (-\infty,0]}(H_{\hbar,\varepsilon})\big] ,
\end{equation}
where we have used linearity of the trace. We will for the remaining part of the proof use the following notation 
\begin{equation*}
	l_k=l(x_k), \quad f_k=f(x_k) \quad h_k = \frac{\hbar}{l_kf_k} \quad\text{and}\quad \varepsilon_k = \hbar_k^{1-\delta}.
\end{equation*}
We have that $h_k$ is uniformly bounded from above since
\begin{equation*}
	l(x)f(x) = A^{-\frac32}(\abs{\varphi_1(x)V_\varepsilon(x)}^2+\hbar^{\frac43})^{\frac34} \geq A^{-\frac32} \hbar, 
\end{equation*}
for all $x$. Moreover, since we by assumption have that $\delta\geq \frac13$ and $l_k=f_k^2$ we obtain that
\begin{equation}\label{EQ:proof_main_7}
	l_k \varepsilon^{-1} \leq \varepsilon_k^{-1}.
\end{equation}
 We define the two unitary operators $U_l$ and $T_z$ by
\begin{equation*}
	U_l f(x) = l^{\frac{d}{2}} f( l x) \quad\text{and}\quad T_zf(x)=f(x+z) \qquad\text{for $f\in L^2(\R^d)$}.
\end{equation*}
Moreover we set
\begin{equation*}
	\begin{aligned}
	\tilde{H}_{\varepsilon,h_k} = f_k^{-2} (T_{x_k} U_{l_k}) H_\hbar (T_{x_k} U_{l_k})^{*}
	 = - h_k^2 \Delta +\tilde{V}_\varepsilon(x),
	\end{aligned}
\end{equation*}
where $\tilde{V}_\varepsilon(x)=f_k^{-2} V_\varepsilon(l_kx+x_k)$. We will here need to establish that this rescaled operator satisfies the assumptions of Theorem~\ref{THM:Loc_mod_prob} with $\hbar_k$, $\varepsilon_k$ and the set $B(0,8)$. To establish this we firstly observe that  by \eqref{EQ:proof_main_3} we have
\begin{equation}\label{EQ:Rough_weyl_asymptotics_3.5}
	(1-8\rho) l_k \leq l(x) \leq (1+8\rho) l_k \qquad\text{for all $x \in B(x_k,8l_k)$}.
\end{equation}
We start by verifying that the operator $\tilde{H}_{\varepsilon,h_k}$ satisfies Assumption~\ref{Assumption:local_potential_1}. From Lemma~\ref{LE:framing_operators} it follows that the operator $\tilde{H}_{\varepsilon,h_k}$ is lower semibounded and selfadjoint. By our choice of $\varphi_1$ we have that $\tilde{H}_{\varepsilon,h_k}$ will satisfies part two of Assumption~\ref{Assumption:local_potential_1} with the set $B(0,8)$ and the potential
\begin{equation}\label{EQ:proof_main_8}
	\widetilde{\varphi_1V}_\varepsilon(x) = \varphi_1(l_kx+x_k) f_k^{-2}V_\varepsilon(l_kx+x_k),
\end{equation}
where we by \eqref{EQ:proof_main_8} have that $\widetilde{\varphi_1V}_\varepsilon(x)\in C_0^\infty(\R^d)$.  What remains to verify is that we have obtained a non-critical condition \eqref{THM:model_prob_global_Non_crit}. Using \eqref{EQ:proof_main_4} we have for for $x$ in $B(0,8)$ that
\begin{equation*}
	\begin{aligned}
	\abs{\widetilde{\varphi_1V}_\varepsilon(x)} + h_k^{\frac{2}{3}} &= f_k^{-2} \abs{\varphi_1V_\varepsilon(l_kx+x_k)} + (\tfrac{\hbar}{f_k l_k})^{\frac{2}{3}}
	=l_k^{-1}( \abs{\varphi_1V_\varepsilon(l_kx+x_k)} +\hbar^{\frac23})
	\\
	&\geq  l_k^{-1} A l(l_k x+x_k) \geq (1-8\rho) A.
	\end{aligned}
\end{equation*}
Hence we have obtained the  non-critical condition on $B(0,8)$. So all assumptions of Theorem~\ref{THM:Loc_mod_prob} is fulfilled. But before applying it we will verify that the numbers the constant from Theorem~\ref{THM:Loc_mod_prob} depends on are independent of $k$ and $\hbar$. Firstly we have for the norm estimate for the potential that
\begin{equation*}
	\norm{\widetilde{\varphi_1V}_\varepsilon}_{L^\infty(B(0,8))} = \sup_{x\in B(0,8)} \big|  \varphi_1(l_kx+x_k) f_k^{-2}V_\varepsilon(l_kx+x_k)\big| \leq (1+8\rho)A,
\end{equation*}
where we have used \eqref{EQ:proof_main_4} and \eqref{EQ:Rough_weyl_asymptotics_3.5}. When considering the derivatives we have for $\alpha\in\N_0^d$ with $|\alpha|\geq1$ that
\begin{equation}
	\begin{aligned}
    	\MoveEqLeft \varepsilon_k^{-\min(0,\tau-|\alpha|)}\norm{\partial^\alpha\widetilde{\varphi_1V}_\varepsilon}_{L^\infty(\R^d)}
	\\
	&\leq \varepsilon_k^{-\min(0,\tau-|\alpha|)}  f_k^{-2} l_k^{|\alpha|}  \varepsilon^{\min(0,\tau-|\alpha|)} \sup_{x\in\R^d} \sum_{\beta\leq \alpha } \binom{\alpha}{\beta} \big| (\partial^{\alpha-\beta}\varphi_1)(\partial^\beta V_\varepsilon)(l_kx+x_k) \big|  
     	\\
     	& \leq   C_\alpha ,
     	\end{aligned}
\end{equation}
where $C_\alpha$ is independent of $k$ and $\hbar$. We have in the estimate used the definition of $\varepsilon_k$, $f_k$, \eqref{EQ:proof_main_7} and Proposition~\ref{PRO:smoothning_of_func}. Hence we have that all these estimates are independent of $\hbar$ and $k$. The last numbers we check are the numbers $\norm{\partial_x^\alpha \widetilde{\varphi_k\varphi}}_{L^\infty(\R^d)}$ for all $\alpha\in\N_0^d$, where $\widetilde{\varphi_k\varphi}=(T_{x_k} U_{l_k})\varphi_k\varphi(T_{x_k} U_{l_k})^{*}$. Here we have by construction of $\varphi_k$ \eqref{EQ:proof_main_5} for all $\alpha\in\N_0^d$ 
\begin{equation*}
	\begin{aligned}
   	\norm{\partial_x^\alpha \widetilde{\varphi_k\varphi}}_{L^\infty(\R^d)}
   	&=\sup_{x\in\R^d} \abs{l_k^{\abs{\alpha}} \sum_{\beta\leq \alpha} {\binom{\alpha}{\beta}} (\partial_x^{\beta}\varphi_k)(l_k x+x_k) (\partial_x^{\alpha - \beta}\varphi)(l_kx+x_k) }
    	\\
    	&\leq C_\alpha \sup_{x\in\R^d} \sum_{\beta\leq \alpha} {\binom{\alpha}{\beta}} l_k^{\abs{\alpha-\beta} }\abs{(\partial_x^{\alpha - \beta}  \varphi)(l_kx+x_k) } \leq \widetilde{C}_\alpha.
      \end{aligned}
  \end{equation*}
With this we have established that all numbers the constant from Theorem~\ref{THM:Loc_mod_prob} are independent of $\hbar$ and $k$.
From applying Theorem~\ref{THM:Loc_mod_prob} we get that
\begin{equation}\label{EQ:proof_main_9}
	\begin{aligned}
	\MoveEqLeft \big| \Tr\big[\varphi  g_\gamma (H_{\varepsilon,\hbar})  \big] - \frac{1}{(2\pi\hbar)^d} \int_{\R^{2d}}g_\gamma( p^2+V(x))\varphi(x) \,dx dp \big|
	\\
	\leq {}& \sum_{k\in\mathcal{I}}\big|  \Tr\big[\varphi_k\varphi  g_\gamma (H_{\varepsilon,\hbar} )  \big] - \frac{1}{(2\pi\hbar)^d} \int_{\R^{2d}}g_\gamma( p^2+V(x))\varphi_k\varphi(x) \,dx dp \big|
	\\
	\leq  {} & \sum_{k\in\mathcal{I}} f_k^{2\gamma}  \big|\Tr\big[ g_\gamma\tilde{H}_{\varepsilon,h_k} ) \widetilde{\varphi_k\varphi} \big]
	 - \frac{1}{(2\pi h_k)^d} \int_{\R^{2d}} g_\gamma( p^2+\tilde{V}(x))\widetilde{\varphi_k\varphi}(x) \,dx dp \big|
	 \\
	  \leq  {} & \sum_{k\in\mathcal{I}} f_k^{2\gamma}  \big|\Tr\big[ g_\gamma (\tilde{H}_{\varepsilon,h_k} ) \widetilde{\varphi_k\varphi} \big]
	 - \frac{1}{(2\pi h_k)^d} \int_{\R^{2d}} g_\gamma( p^2+\widetilde{\varphi_1 V_\varepsilon}(x))\widetilde{\varphi_k\varphi}(x) \,dx dp \big|
	 \\
	 &+ \sum_{k\in\mathcal{I}} \frac{ f_k^{2\gamma}}{(2\pi h_k)^d}   \Big|\int_{\R^{2d}} \big[ g_\gamma( p^2+\widetilde{\varphi_1 V_\varepsilon}(x))\widetilde{\varphi_k\varphi}(x) -g_\gamma( p^2+\tilde{V}(x)) \big]\widetilde{\varphi_k\varphi}(x) \,dx dp \Big|
	\\
	\leq {} & C \sum_{k\in\mathcal{I}} \frac{f_k^{2\gamma} }{h_k^{d}} \Big[ h_k^{1+\gamma} +    \Big|\int_{\R^{2d}} \big[ g_\gamma( p^2+\widetilde{\varphi_1 V_\varepsilon}(x))\widetilde{\varphi_k\varphi}(x) -g_\gamma( p^2+\tilde{V}(x)) \big]\widetilde{\varphi_k\varphi}(x) \,dx dp \Big|\Big].
	\end{aligned}
\end{equation}
To estimate the remaining integrals we will use Lemma~\ref{LE:comparison_phase_space_int}. Combining this Lemma with \eqref{EQ:proof_main_0} we obtain that
\begin{equation}\label{EQ:proof_main_10}
	\begin{aligned}
	\MoveEqLeft   \Big|\int_{\R^{2d}} \big[ g_\gamma( p^2+\widetilde{\varphi_1 V_\varepsilon}(x))\widetilde{\varphi_k\varphi}(x) -g_\gamma( p^2+\tilde{V}(x)) \big]\widetilde{\varphi_k\varphi}(x) \,dx dp \Big| \leq C \hbar^{1+\gamma} \leq C h_k^{1+\gamma} . 
	\end{aligned}
\end{equation}
Hence we obtain from combining \eqref{EQ:proof_main_9} and \eqref{EQ:proof_main_10} that
\begin{equation}\label{EQ:proof_main_11}
	\begin{aligned}
	\big| \Tr\big[\varphi  g_\gamma (H_{\varepsilon,\hbar})  \big] - \frac{1}{(2\pi\hbar)^d} \int_{\R^{2d}}g_\gamma( p^2+V(x))\varphi(x) \,dx dp \big|
	\leq C \sum_{k\in\mathcal{I}} f_k^{2\gamma}  h_k^{1+\gamma-d} .
	\end{aligned}
\end{equation}
By just considering the sum over $k$ on the righthand side of \eqref{EQ:proof_main_11} and by using \eqref{EQ:Rough_weyl_asymptotics_3.5} we have that
\begin{equation}\label{EQ:proof_main_12}
	\begin{aligned}
	 \sum_{k\in\mathcal{I}} C h_k^{1+\gamma-d}f_k^{2\gamma} &=  \sum_{k\in\mathcal{I}} \tilde{C} \hbar^{1+\gamma-d} \int_{B(x_k,l_k)} l_k^{-d} f_k^{2\gamma}(l_kf_k)^{d-1-\gamma} \,dx
	\\
	& =  \sum_{k\in\mathcal{I}} \tilde{C} \hbar^{1+\gamma-d} \int_{B(x_k,l_k)} l_k^{\gamma-d} l_k^{\frac{3d-3-3\gamma}{2}} \,dx 
	\\
	&\leq \sum_{k\in\mathcal{I}} \hat{C} \hbar^{1+\gamma-d} \int_{B(x_k,l_k)} l(x)^{\frac{d -3 -\gamma}{2}}\,dx
	\leq C   \hbar^{1+\gamma-d}, 
	\end{aligned}
\end{equation}%
where we in the last inequality have used that $\supp(\varphi)\subset\Omega_{2\tilde{\nu},V_\varepsilon}$ and that $\Omega_{2\tilde{\nu},V_\varepsilon}$ is assumed to be compact. This ensures that the constant obtained in the last inequality is finite. From combing the estimates and identities in \eqref{EQ:proof_main_1}, \eqref{EQ:proof_main_2}, \eqref{EQ:proof_main_11} and \eqref{EQ:proof_main_12} we obtain that
\begin{equation*}
	  \Big|\Tr\big[\boldsymbol{1}_{ (-\infty,0]}(H_{\hbar,\varepsilon})\big] - \frac{1}{(2\pi\hbar)^d} \int_{\R^{2d}}\boldsymbol{1}_{(-\infty,0]}(p^2+V_\varepsilon(x))\,dx dp \Big| \leq C \hbar^{1-d}
\end{equation*}
for all $\hbar\in(0,\hbar_0]$. This concludes the proof.
\end{proof} 
\begin{proof}[Proof of Theorem~\ref{Thm:Main_2} and Theorem~\ref{Thm:Main_3}]
The proofs are almost analogous to the proof just given for Theorem~\ref{Thm:Main}. The difference is that $\delta$ is here always chosen to be $\frac{1}{3}$ when choosing the scaling of the framing operators $H_{\hbar,\varepsilon}^{\pm}$ with $\varepsilon=\hbar^{1-\delta}$. After this choice the remainder of the proof is identical. This concludes the proof.
\end{proof}

\begin{proof}[Proof of Theorem~\ref{Thm:Main_4} and Theorem~\ref{Thm:Main_5}]
The proofs are again almost analogous to the proof just given for Theorem~\ref{Thm:Main}. We have the same difference as before. That is we always choose $\delta=\frac{1}{3}$ when choosing the scaling of the framing operators $H_{\hbar,\varepsilon}^{\pm}$ with $\varepsilon=\hbar^{1-\delta}$. After this choice the remainder of the proof is identical until \eqref{EQ:proof_main_12}. For the cases considered here we have that $d-3-\gamma<0$. Hence we get a negative power of $l(x)$ and will have to use the lower bound $l(x)\geq C \hbar^{\frac{2}{3}}$ and not that $l(x)$ is bounded from above. Using this and by calculating the power we obtain of the semiclassical parameter one obtains the errors stated in the theorems. This concludes the proof.
\end{proof}

\bibliographystyle{plain} \bibliography{Bib_paperB.bib}
\end{document}